\documentclass[12pt,a4paper]{amsart}
\usepackage{amsmath,amssymb,amsthm,mathtools}
\usepackage[margin=2.7cm]{geometry}
\usepackage[shortlabels]{enumitem}
\usepackage[colorlinks=true,linkcolor=red,citecolor=blue]{hyperref}

\DeclareMathOperator{\supp}{supp}
\newcommand{\R}{\mathbb{R}}
\newcommand{\ind}[1]{\mathbf{1}_{#1}}

\newtheorem{theorem}{Theorem}[section]
\newtheorem{lemma}[theorem]{Lemma}
\newtheorem{corollary}[theorem]{Corollary}
\newtheorem{proposition}[theorem]{Proposition}
\theoremstyle{definition}
\newtheorem{definition}[theorem]{Definition}
\newtheorem{example}[theorem]{Example}
\theoremstyle{remark}
\newtheorem{remark}[theorem]{Remark}

\numberwithin{equation}{section}
\makeatletter
\def\l@subsection{\@tocline{2}{0pt}{2.5pc}{5pc}{}}
\makeatother

\begin{document}

\title[Packing intermediate dimensions]{Packing intermediate dimensions of measures and applications}

\author{Nicolas Angelini}
\address{Department of Mathematics, Universit\`a degli Studi di Trento. Trento, Italy.}
\address{Instituto de Matem\'atica Aplicada San Luis (IMASL), CONICET, Argentina.
Departamento de Matem\'atica, FCFMyN, Universidad Nacional de San Luis, Argentina.}
\email{nicolas.angelini.2015@gmail.com}

\subjclass[2020]{Primary 28A78, 28A80; Secondary 28A75, 60G22}
\keywords{Packing dimension, packing dimension profiles, dimension interpolation, intermediate dimensions, orthogonal projections, fractional Brownian motion, Assouad dimension}

\date{\today}

\begin{abstract}
We introduce a spectrum of dimensions of measures, the \emph{packing
intermediate dimensions} $\dim_{P,\theta}\mu$, $\theta\in(0,1]$, of a Borel
probability measure $\mu$ on $\mathbb{R}^n$ and the \emph{quasi-Hausdorff dimension}
$\dim_{qH}\mu$. We show that this spectrum encodes the information on
$\mu$ that governs the packing dimension of its projections and
H\"{o}lder images. First, $\dim_{qH}\mu$ is the exact threshold for the
packing dimension profiles of Falconer and Howroyd: for every real
$0<s\leq n$, $\dim_P^s\mu=s$ if and only if $s\leq\dim_{qH}\mu$. Second,
we prove a lower bound for the profiles in terms of $\dim_H\mu$,
$\dim_{P,\theta}\mu$ and the Assouad dimension of $\operatorname{supp}\mu$,
which improves the bound of Falconer and Mattila, the best previously
known general lower bound for the packing dimension of projected
measures. The improvement is strict whenever $\dim_H\mu<\dim_P\mu$ and
$\dim_A(\operatorname{supp}\mu)<n$, and when $\dim_A(\operatorname{supp}\mu)=n$
it occurs exactly when $\dim_{P,\theta}\mu$ exceeds an explicit universal
lower bound for some $\theta$ in an explicit range. We present examples of measures
with $\dim_H\mu<\dim_P\mu<\dim_A(\operatorname{supp}\mu)=n$ for which our
bound gives the exact value $\dim_P\mu_V=\min\{\dim_P\mu,m\}$ for almost
every $m$-dimensional subspace $V$, whereas the bound of Falconer and
Mattila is strictly smaller. We also deduce
the estimate
$\dim_P^s\mu\geq\dim_P\mu-\max\{0,\dim_A(\operatorname{supp}\mu)-s\}$,
which recovers a projection theorem of Falconer, Fraser and Shmerkin for
sets. As applications, for a large family of random $\alpha$-H\"{o}lder
maps $f_\omega:\mathbb{R}^n\to\mathbb{R}^m$, including index-$\alpha$
fractional Brownian motion and orthogonal projections, we prove that, for
compactly supported $\mu$, $\dim_P\mu_{f_\omega}=m$ almost surely if and
only if $\alpha m\leq\dim_{qH}\mu$, and that the packing analogue of
Kahane's formula holds for fractional Brownian motions of every index and
every target dimension if and only if $\dim_{qH}\mu=\dim_P\mu$. Analogous
results are obtained for analytic sets.
\end{abstract}

\maketitle

\setcounter{tocdepth}{2}
\tableofcontents

\newpage

\section{Introduction}

Packing dimension $\dim_P$ was introduced as a dual concept to Hausdorff
dimension and has since been studied extensively, both for sets and
measures.
A fundamental problem in fractal geometry is understanding how dimension
behaves under mappings: given a measure $\mu$ and a map $f$, how does
$\dim\mu_f$ relate to $\dim\mu$, where $\dim$ denotes any notion of
dimension and $\mu_f$ denotes the image of $\mu$ under $f$? A natural
class of examples is provided by $\alpha$-H\"{o}lder maps, which includes
orthogonal projections (the case $\alpha=1$) and fractional Brownian
motion.

For Hausdorff dimension the answer is classical. If $\mu$ is a finite
Borel measure on $\R^n$ and $\mu_V$ denotes the orthogonal projection of
$\mu$ onto an $m$-dimensional subspace $V\in G(n,m)$, then
\begin{equation}\label{eq:intro_HT}
\dim_H\mu_V=\min\{\dim_H\mu,m\}\qquad\text{for }\gamma_{n,m}\text{-almost every }V\in G(n,m),
\end{equation}
see \cite{hu1994fractal}. For an index-$\alpha$ fractional Brownian motion
$X:\R^n\to\R^m$ and a compactly supported finite Borel measure $\mu$ on
$\R^n$, Kahane \cite{kahane1985some} proved the set version of the formula
\begin{equation}\label{eq:kahane}
\dim_H\mu_X=\min\Bigl\{m,\,\frac{\dim_H\mu}{\alpha}\Bigr\}\qquad\text{almost surely},
\end{equation}
from which the measure version follows.

Unfortunately, packing dimension does not admit analogous formulas in
general. Falconer and Howroyd \cite{falconer1997packing} introduced the
packing dimension profiles $\dim_P^{s}\mu$, $s>0$ (see
Definition~\ref{def:packing_profile}), and proved that for every finite
Borel measure $\mu$ on $\R^n$,
\begin{equation}\label{eq:intro_FH}
\dim_P\mu_V=\dim_P^{m}\mu\qquad\text{for }\gamma_{n,m}\text{-almost every }V\in G(n,m),
\end{equation}
while Xiao \cite[Theorem 3.1]{xiao1997packing} proved that, for the
index-$\alpha$ fractional Brownian motion $X:\R^n\to\R^m$,
\begin{equation}\label{eq:intro_xiao}
\dim_P\mu_X=\frac{1}{\alpha}\,\dim_P^{\alpha m}\mu\qquad\text{almost surely}.
\end{equation}
These are complete formulas for the packing dimension of the images,
expressed in terms of profiles that encode multiscale geometric
information about $\mu$. However, such profiles are difficult to compute
in practice, and a natural question is: under which conditions can one
recover formulas analogous to \eqref{eq:intro_HT} and \eqref{eq:kahane}?
It is straightforward to see that $\dim_P\mu_V\leq\min\{\dim_P\mu,m\}$
for every $V\in G(n,m)$. In the other direction, Falconer and Mattila
\cite[Theorem 3.3]{falconer1996packing} proved that if $\mu$ is a Borel
probability measure on $\R^n$, then for $\gamma_{n,m}$-almost every $V$,
\begin{equation}\label{eq:intro_FM}
\dim_P\mu_V\ \geq\
\begin{cases}
\dfrac{\dim_P\mu\left(1-\frac{1}{n}\dim_H\mu\right)}
{1+\left(\frac{1}{m}-\frac{1}{n}\right)\dim_P\mu-\frac{1}{m}\dim_H\mu},
& \text{if } \dim_H\mu\leq m, \\[1.2em]
m, & \text{if } \dim_H\mu\geq m,
\end{cases}
\end{equation}
and they showed that this bound is sharp in general. For sets, Falconer,
Fraser and Shmerkin \cite{falconer2021assouad} showed that the possible
drop of the packing dimension under projections is controlled by the
Assouad dimension of the set.\\

\noindent
\textbf{Packing intermediate dimensions}

In recent years several families of dimensions interpolating between two
classical notions have been introduced, such as the intermediate
dimensions of Falconer, Fraser and Kempton \cite{falconer2020intermediate},
which interpolate between the Hausdorff and box dimensions, or the
Assouad spectrum, which interpolates between the box and the Assouad
dimensions \cite{fraser2018new}. In this article we introduce a family
of this kind for the packing dimension of measures, the \emph{packing
intermediate dimensions} $\dim_{P,\theta}\mu$, $\theta\in(0,1]$ (see
Definition~\ref{def:pid}), and we show that they provide a natural
framework to address the question above. They are defined through the
kernels
$$\phi_{r,\theta}^{t,n}(z)=\min\bigl\{1,\ r^{t}|z|^{-t},\
r^{\theta(n-t)+t}|z|^{-n}\bigr\},$$
which coincide with the kernel $\min\{1,r^t|z|^{-t}\}$ of the
$t$-dimensional profile for $|z|\leq r^{\theta}$, and decay like the
kernel of the $n$-dimensional profile for $|z|>r^{\theta}$. Equivalently
(Theorem~\ref{thm:windows}), $\dim_{P,\theta}\mu$ is the supremum of the
exponents $t\geq 0$ such that, at $\mu$-almost every point $x$ and for
every $\varepsilon>0$, there are arbitrarily small windows of scales
$[\rho,\rho^{\theta}]$ along which $\mu(B(x,r))\leq\varepsilon r^{t}$. For
$\theta=1$ the window reduces to a single scale and one recovers the
packing dimension of $\mu$, moreover at $\theta=0$ in its equivalent definition (Theorem \ref{thm:windows}) it reduces to Hausdorff dimension. The map $\theta\mapsto\dim_{P,\theta}\mu$ is continuous in $(0,1]$ and 
nondecreasing, and
$$\dim_H\mu\leq\dim_{P,\theta}\mu\leq\dim_P\mu\qquad\text{for every }\theta\in(0,1].$$
We call the limit
$$\dim_{qH}\mu:=\lim_{\theta\to0}\dim_{P,\theta}\mu$$
the \emph{quasi-Hausdorff dimension} of $\mu$. By its construction it is
of the same nature as the quasi-Assouad dimension, which is the limit of
the Assouad spectrum as the parameter tends to the endpoint where the
Assouad dimension sits \cite{fraser2018new}; it is also the packing
counterpart of the quasi-Hausdorff dimensions of sets studied in
\cite{angelini2025critical} for the intermediate dimensions introduced in \cite{falconer2020intermediate}. In general
$\dim_H\mu\leq\dim_{qH}\mu\leq\dim_P\mu$, and both inequalities can be
strict. A useful tool that we prove is the following
universal lower bound (Lemma~\ref{lem:floor}): writing $P=\dim_P\mu$ and
$N=\dim_A(\supp\mu)$ for compactly supported measures,
$$\dim_{P,\theta}\mu\ \geq\ \frac{\theta NP}{N-(1-\theta)P}\qquad\text{for every }\theta\in(0,1].$$
In particular, $\dim_{P,\theta}\mu=\dim_{qH}\mu=\dim_P\mu$ whenever
$\dim_P\mu=\dim_A(\supp\mu)$. On the other hand, for the measures
constructed by Falconer and Mattila to show the sharpness of
\eqref{eq:intro_FM}, this lower bound is attained and
$\dim_{qH}\mu=\dim_H\mu<\dim_P\mu$ (Example~\ref{ex:FM}).

The characterization through windows of scales is the packing analogue
of the definition of the intermediate dimensions of measures introduced
in \cite{ANGELINI2026130039}, which interpolate between the Hausdorff and
Minkowski dimensions; the difference lies in the quantifiers, since here
the scales are allowed to depend on the point (see
Remark~\ref{rem:windows}). Results on the behaviour of intermediate
dimensions under projections and other images were obtained in
\cite{angelini2025critical}, see also \cite{burrell2021projection, Burrell}.\\

\noindent
\textbf{Main results}

Our first main result shows that the quasi-Hausdorff dimension is the
exact threshold at which the packing dimension profiles attain their
maximal value. More precisely, for every $\theta\in(0,1]$ and every real
number $0<s\leq n$,
\begin{equation}\label{eq:intro_two_sided}
(1-\theta)\min\{s,\dim_{P,\theta}\mu\}\ \leq\ \dim_P^{s}\mu\ \leq\
(1-\theta)\,s+\theta\min\{s,\dim_{P,\theta}\mu\},
\end{equation}
and consequently
\begin{equation}\label{eq:intro_threshold}
\dim_P^{s}\mu=s\quad\Longleftrightarrow\quad s\leq\dim_{qH}\mu;
\end{equation}
see Theorem~\ref{thm:threshold}. Moreover, the packing analogue of
\eqref{eq:intro_HT} and \eqref{eq:kahane} at the level of profiles,
namely $\dim_P^{s}\mu=\min\{s,\dim_P\mu\}$ for every $s>0$, holds if and
only if $\dim_{qH}\mu=\dim_P\mu$ (Corollary~\ref{cor:kahane_profiles}); as
noted above, this is the case whenever $\dim_P\mu=\dim_A(\supp\mu)$.

Our second group of results concerns lower bounds for the profiles in the
complementary regime $s>\dim_{qH}\mu$. We show that the Assouad dimension
of the support controls the possible drop of the profiles: if $\supp\mu$
is compact, then
\begin{equation}\label{eq:intro_assouad}
\dim_P^{s}\mu\ \geq\ \dim_P\mu-\max\{0,\ \dim_A(\supp\mu)-s\}
\end{equation}
(Corollary~\ref{cor:assouad}). Note that the measure enters this bound
only through $\dim_P\mu$, so that the size of the possible loss is
governed by the support alone and is common to all measures supported on
a given set. Taking supremum over the measures supported on a set, one
recovers the corresponding result of Falconer, Fraser and Shmerkin
\cite{falconer2021assouad} for sets in its Assouad dimension form
(Remark~\ref{rem:FFS}). In fact, \eqref{eq:intro_assouad} is a
consequence of a sharper bound involving the Hausdorff dimension and the
packing intermediate dimensions. Writing $d_H=\dim_H\mu$,
$P_\theta=\dim_{P,\theta}\mu$ and $N=\dim_A(\supp\mu)$ (with $N=n$ if
$\supp\mu$ is not compact), we prove that for $d_H<s\leq n$,
\begin{equation}\label{eq:intro_FMimproved}
\dim_P^{s}\mu\ \geq\ \sup_{0<\theta\leq 1}\min\Bigl\{P_\theta,\
\frac{s\,P_\theta\,(N-d_H)}{P_\theta(N-d_H)+\theta(s-d_H)(N-P_\theta)}\Bigr\};
\end{equation}
see Theorem~\ref{thm:FMimproved}. For $s=m$, the term $\theta=1$ in
\eqref{eq:intro_FMimproved} with $N$ replaced by $n$ is exactly the bound
\eqref{eq:intro_FM} of Falconer and Mattila. The term $\theta=1$ alone
already improves \eqref{eq:intro_FM} strictly whenever
$\dim_H\mu<\dim_P\mu$ and $\dim_A(\supp\mu)<n$, and it implies
\eqref{eq:intro_assouad}. The terms $\theta<1$, which are only available
through the packing intermediate dimensions, improve it strictly, even
when $\dim_A(\supp\mu)=n$, as soon as $\dim_{P,\theta}\mu$ exceeds the
universal lower bound above for some $\theta$ in an explicit interval
$[\theta^\dagger,1)$ (Corollary~\ref{cor:floor_improvement} and example \ref{ex:constant-Rn}). Thus the
introduction of the packing intermediate dimensions leads to an
improvement of the bound of Falconer and Mattila, which was the best
known general lower bound for the packing dimension of projected
measures. For the measures that show the sharpness of
\eqref{eq:intro_FM}, the universal lower bound is attained and
\eqref{eq:intro_FMimproved} reduces to \eqref{eq:intro_FM}, in accordance
with its optimality.\\

In Section~\ref{sec:images} we apply the results above to a large family
of random $\alpha$-H\"{o}lder maps $f_\omega:\R^n\to\R^m$
(Definition~\ref{def:random_holder}), which includes index-$\alpha$
fractional Brownian motion and, for $\alpha=1$, orthogonal projections
onto $m$-dimensional subspaces as particular cases. For these maps, and
compactly supported measures, we show that
$$\dim_P\mu_{f_\omega}=\frac{1}{\alpha}\,\dim_P^{\alpha m}\mu\qquad\text{almost surely}$$
(Theorem~\ref{thm:images}), which contains \eqref{eq:intro_FH} and
\eqref{eq:intro_xiao} as particular cases, so that all the results above
apply to the images. In particular,
$$\dim_P\mu_{f_\omega}=m\ \text{ almost surely}\quad\Longleftrightarrow\quad
\alpha m\leq\dim_{qH}\mu,$$
the forward implication being valid for every (deterministic)
$\alpha$-H\"{o}lder map (Corollary~\ref{cor:images_threshold}). For
orthogonal projections this reads
$$\dim_P\mu_V=m\ \text{ for }\gamma_{n,m}\text{-almost every }V\in G(n,m)
\quad\Longleftrightarrow\quad m\leq\dim_{qH}\mu,$$
while \eqref{eq:intro_assouad} gives $\dim_P\mu_V\geq\dim_P\mu-\max\{0,\
\dim_A(\supp\mu)-m\}$ for almost every $V$; in particular, if
$E\subset\R^n$ is compact and $m\geq\dim_A E$, the packing dimension of
every measure supported on $E$ is preserved under orthogonal projection
onto almost every $m$-dimensional subspace
(Corollary~\ref{cor:projections}). Moreover,
Xiao's formula \eqref{eq:intro_xiao} reduces to the packing analogue of
Kahane's formula \eqref{eq:kahane}, namely
$\dim_P\mu_X=\min\{m,\alpha^{-1}\dim_P\mu\}$ almost surely for every index
$\alpha$ and every $m$, if and only if $\dim_{qH}\mu=\dim_P\mu$
(Corollary~\ref{cor:images_kahane}).

In section \ref{sec:ex} we present several examples regarding the optimality of our main results.

Finally, in Section~\ref{sec:sets} we obtain the analogous results for
analytic sets. Following \cite{xiao1997packing}, the packing dimension of
the image of an analytic set can be recovered from the measures it
supports, which allows us to transfer the threshold phenomenon from
measures to sets. Defining, for an analytic set $E\subset\R^n$,
$$\dim_{P,\theta}E:=\sup\{\dim_{P,\theta}\mu\,:\,\mu\in\mathcal{M}_c^+(E)\},$$
where $\mathcal{M}_c^+(E)$ denotes the family of finite Borel measures
with compact support contained in $E$, we prove that
$$\dim_P f_\omega(E)=m\ \text{ almost surely}\quad\Longleftrightarrow\quad
\alpha m\leq\lim_{\theta\to 0}\dim_{P,\theta}E$$
(Theorem~\ref{thm:sets_images}).

The paper is organized as follows. In Section~\ref{sec:prelim} we recall
the necessary background on dimensions of measures, packing dimension
profiles and fractional Brownian motion. Section~\ref{sec:growth}
contains the estimates for the growth of the measure of balls on which
most of our arguments rely; the key point is a sharpening of an estimate
of Falconer and Mattila, in which the ambient dimension is replaced by
the Assouad dimension of the support. In Section~\ref{sec:pid} we
introduce the packing intermediate dimensions and the quasi-Hausdorff
dimension and establish their basic properties. Section~\ref{sec:threshold}
contains the threshold characterization for the profiles, and
Section~\ref{sec:lower} the lower bounds. In Section~\ref{sec:images} we
apply these results to random H\"{o}lder maps, in \ref{sec:ex} we illustrate with examples and in
Section~\ref{sec:sets} we extend them to analytic sets.

\section{Preliminaries}\label{sec:prelim}

We write $B(x,r)$ for the closed ball of centre $x$ and radius $r$, and
for a bounded set $E\subset\R^n$ we write $|E|$ for its diameter.
Throughout the paper, $\mu$ denotes a Borel probability measure on
$\R^n$, that is $\mu(\R^n)=1$, and we write $\supp\mu$ for its support;
we do not assume that $\supp\mu$ is compact unless this is said
explicitly. Given a Borel set $A\subset\R^n$ with $\mu(\R^n\setminus A)=0$
and a Borel measurable function $f:A\to\R^m$, the image (or pushforward)
measure of $\mu$ under $f$ is
$$\mu_f(B):=\mu\bigl(f^{-1}(B)\bigr)=\mu\{x\in A:f(x)\in B\},
\qquad B\subset\R^m \text{ Borel}.$$
Given $0<\alpha\leq 1$, a function $f:A\subseteq\R^n\to\R^m$ is
\emph{$\alpha$-H\"{o}lder continuous} if there exists a constant $c>0$
such that
$$|f(x)-f(y)|\leq c\,|x-y|^{\alpha}\qquad\text{for all }x,y\in A.$$
For $1\leq m\leq n$, we write $G(n,m)$ for the Grassmannian of
$m$-dimensional linear subspaces of $\R^n$, endowed with its natural
invariant probability measure $\gamma_{n,m}$, see
\cite[Chapter 3]{mattila1999geometry}. For $V\in G(n,m)$, $\pi_V:\R^n\to V$
denotes the orthogonal projection onto $V$, and $\mu_V:=\mu\circ\pi_V^{-1}$
is the projected measure, a Borel probability measure on $V$. All the
dimensions considered in this paper are invariant under isometries, so
we may identify $V$ with $\R^m$ through a linear isometry whenever
convenient.\\

\noindent

The Assouad dimension of a nonempty set $E\subset\R^n$ is defined by
\begin{align*}
\dim_A E := \inf \Bigl\{ s \geq 0 :\ &\exists\, C>0 \text{ such that for all } 0<r<R \text{ and } x\in E,\\
&\mathcal{N}_r\bigl(E \cap B(x,R)\bigr)\leq C \left(\frac{R}{r}\right)^s\Bigr\},
\end{align*}
where $\mathcal{N}_r(A)$ denotes the minimal number of balls of radius
$r$ needed to cover $A$. The value of $\dim_A E$ is unchanged if the
covering balls are replaced by cubes of side length $r$, by the cubes of
the mesh of side length $r$ oriented at the origin that intersect $A$, or
by maximal $r$-separated subsets of $A$, and also if the ball $B(x,R)$ is
replaced by the cube $Q(x,R)$ of side length $R$ centred at $x$. We use
these alternative formulations interchangeably, and refer to
\cite{fraser2020assouad} for a detailed discussion. Given a Borel
probability measure $\mu$ on $\R^n$, we set
\begin{equation}\label{eq:Nmu}
N_\mu:=\begin{cases}\dim_A(\supp\mu), & \text{if } \supp\mu \text{ is compact},\\ n, & \text{otherwise}.\end{cases}
\end{equation}

\noindent
The Hausdorff and packing dimensions of a Borel probability measure $\mu$
on $\R^n$ are defined by
\begin{align*}
\dim_H\mu&=\inf\{\dim_H E : E \text{ is a Borel set with }\mu(E)>0\},\\
\dim_P\mu&=\inf\{\dim_P E : E \text{ is a Borel set with }\mu(E)>0\},
\end{align*}
where $\dim_H E$ and $\dim_P E$ denote the Hausdorff and packing
dimensions of the set $E$. They admit the following local
characterizations, see \cite{falconer1996packing, hu1994fractal}:
\[
\dim_H\mu=\sup\left\{t\geq 0 : \limsup_{r\to 0}\frac{\mu(B(x,r))}{r^{t}}=0
\ \text{ for }\mu\text{-a.e.\ }x\in\R^n\right\},
\]
and 
\begin{equation}\label{eq:hu}
\dim_P\mu=\sup\left\{t\geq 0 : \liminf_{r\to 0}\frac{\mu(B(x,r))}{r^{t}}=0
\ \text{ for }\mu\text{-a.e.\ }x\in\R^n\right\}.
\end{equation}
Note that these quantities do not change if $\mu$ is multiplied by a positive
constant. If $\supp\mu$ is compact, then
$\dim_P\mu\leq\dim_P(\supp\mu)\leq\dim_A(\supp\mu)\leq n$; hence, in all
cases,
\begin{equation}\label{eq:chain}
\dim_H\mu\leq\dim_P\mu\leq N_\mu\leq n .
\end{equation}

\noindent

Falconer and Howroyd \cite{falconer1997packing} introduced the packing
dimension profiles, which recover the packing dimension of the typical
projection of a measure onto a lower-dimensional subspace.

\begin{definition}\label{def:packing_profile}
For a finite Borel measure $\mu$ on $\R^n$ and $s>0$, set
$$F_s^\mu(x,r)=\int_{\R^n}\min\{1,\,r^s|x-y|^{-s}\}\,d\mu(y),$$
and define the \emph{$s$-dimensional packing dimension profile} of $\mu$
by
$$\dim_P^s\mu=\sup\left\{t\geq 0 : \liminf_{r\to 0}\frac{F_s^{\mu}(x,r)}{r^{t}}=0
\ \text{ for }\mu\text{-a.e.\ }x\in\R^n\right\}.$$
\end{definition}

At the ambient exponent the profile reduces to the packing dimension: by
\cite[Corollary 3]{falconer1997packing},
\begin{equation}\label{eq:FHcor3}
\dim_P^n\mu=\dim_P\mu .
\end{equation}
The profiles at integer exponents give the packing dimension of almost
every orthogonal projection, see \eqref{eq:intro_FH}, and Xiao
\cite{xiao1997packing} showed that the profiles at the possibly
non-integer exponent $\alpha m$ give the almost sure packing dimension of
the image of a measure under index-$\alpha$ fractional Brownian motion,
see \eqref{eq:intro_xiao}. We record some elementary properties of the
profiles.

\begin{remark}\label{rem:profiles}
Let $\mu$ be a nonzero finite Borel measure on $\R^n$ and $s>0$.
\begin{enumerate}[(a)]
\item
$\liminf_{r\to0}r^{-t}F_s^\mu(x,r)=0$ for $\mu$-a.e.\ $x$ and every
$t<\dim_P^s\mu$.
\item If $0<s_1\leq s_2$, then
$\min\{1,r^{s_2}|z|^{-s_2}\}\leq\min\{1,r^{s_1}|z|^{-s_1}\}$, so that
$F_{s_2}^\mu\leq F_{s_1}^\mu$ and therefore
$\dim_P^{s_1}\mu\leq\dim_P^{s_2}\mu$.
\item $\dim_P^s\mu\leq\min\{s,\dim_P\mu\}$. Indeed,
$\mu(B(x,r))\leq F_s^\mu(x,r)$, so that \eqref{eq:hu} gives
$\dim_P^s\mu\leq\dim_P\mu$; and for $r\in(0,1)$ we have
$F_s^\mu(x,r)\geq r^s\int\min\{1,|x-y|^{-s}\}\,d\mu(y)$, where the
integral is positive, so that $\dim_P^s\mu\leq s$. Together with (b) and
\eqref{eq:FHcor3}, this gives $\dim_P^s\mu=\dim_P\mu$ for every $s\geq n$.
\item If $0<r\leq u$, then $\ind{B(0,u)}(z)\leq(u/r)^s\min\{1,r^s|z|^{-s}\}$
for every $z$, and therefore
\begin{equation}\label{eq:ball_profile}
\mu\bigl(B(x,u)\bigr)\leq\Bigl(\frac{u}{r}\Bigr)^{s}F_s^\mu(x,r)\qquad\text{for every }x\in\R^n.
\end{equation}
\item \emph{(Layer-cake formula.)} Let $\psi:[0,\infty)\to[0,1]$ be a
continuous nonincreasing function which is constant on $[0,r]$ for some
$r>0$, is piecewise continuously differentiable, and tends to $0$ at
infinity. Then $\psi(v)=\int_v^\infty(-\psi'(u))\,du$ for every $v\geq0$,
and by Tonelli's theorem
\begin{equation}\label{eq:layer_general}
\int\psi(|x-y|)\,d\mu(y)=\int_0^\infty(-\psi'(u))\,\mu\bigl(B(x,u)\bigr)\,du
\qquad\text{for every }x\in\R^n.
\end{equation}
Applied to $\psi(u)=\min\{1,r^su^{-s}\}$ this gives
\begin{equation}\label{eq:layer_profile}
F_s^\mu(x,r)=s\,r^s\int_r^\infty u^{-s-1}\mu\bigl(B(x,u)\bigr)\,du .
\end{equation}
\end{enumerate}
\end{remark}

\noindent
For an analytic set $E\subset\R^n$, let $\mathcal{M}_c^+(E)$ denote the
family of nonzero finite Borel measures with compact support contained
in $E$. It is well known (see \cite{hu1994fractal}, or
\cite[(4.1)]{falconer1997packing}) that
\begin{equation}\label{eq:dimP_sets}
\dim_P E=\sup\{\dim_P\mu : \mu\in\mathcal{M}_c^+(E)\},
\end{equation}
which led Falconer and Howroyd to define the $s$-dimensional packing
dimension profile of $E$ by
$\dim_P^s E:=\sup\{\dim_P^s\mu : \mu\in\mathcal{M}_c^+(E)\}$, see
\cite[(5.3)]{falconer1997packing}. All the dimensions of measures
considered in this paper are invariant under multiplication of the
measure by a positive constant, so the measures in $\mathcal{M}_c^+(E)$
may be normalized to be probability measures.\\

\noindent
Given $0<\alpha<1$ and $m,n\in\mathbb{N}$, an \emph{index-$\alpha$
fractional Brownian motion} is a random field $X:\R^n\to\R^m$ of the form
$X=(X_1,\ldots,X_m)$, where $X_1,\ldots,X_m$ are independent real-valued
random fields $X_i:\R^n\to\R$ satisfying:
\begin{enumerate}
    \item[i)] $X_i(0)=0$;
    \item[ii)] $X_i$ is continuous with probability $1$;
    \item[iii)] the increments $X_i(x)-X_i(y)$ are normally distributed
    with mean $0$ and variance $|x-y|^{2\alpha}$, for all $x,y\in\R^n$.
\end{enumerate}
It is well known that, almost surely, the sample paths of $X$ are
$\beta$-H\"{o}lder continuous on compact sets for every $\beta<\alpha$.
We refer the reader to the classical text of Kahane
\cite{kahane1985some} for a detailed account of fractional Brownian
motion and related stochastic processes.

\section{Estimates for the measure of balls across scales}\label{sec:growth}

In this section we collect the estimates for the measure of balls
$\mu(B(x,r))$ at $\mu$-almost every point $x$ on which most of our
arguments rely. The first one is the elementary bound coming from the
Hausdorff dimension of $\mu$. The second one, Lemma~\ref{lem:growth},
compares $\mu(B(x,r))$ with $\mu(B(x,\rho))$ for $\rho^{a}\leq r\leq 1$.
For the exponent $n(1+\varepsilon)$ it is due to Falconer and Mattila
\cite{falconer1996packing}, and we show that the ambient dimension $n$ can
be replaced by the Assouad dimension of the support. To achieve this, we
first show that every compact set is contained in a compact set
supporting a regular measure whose exponent is arbitrarily close to its
Assouad dimension (Lemma~\ref{lem:regular}); running the argument of
Falconer and Mattila against the resulting regular measure, instead of
the Lebesgue measure, then yields the required growth estimate.

\begin{lemma}[{\cite[Proposition 2.5(a)]{falconer1996packing}}]\label{lem:frostman}
If $0<d<\dim_H\mu$, then for $\mu$-almost every $x$ there is
$c_1=c_1(x)$ such that $\mu(B(x,r))\leq c_1r^d$ for all $r>0$. If $d=0$
this holds with $c_1=1$.
\end{lemma}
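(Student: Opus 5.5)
The plan is to derive the lemma from the local characterization of $\dim_H\mu$ stated in the preliminaries, and then pass from a limsup condition to a uniform bound by splitting into small and large radii.

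Fix $0<d<\dim_H\mu$ and choose $t$ with $d<t<\dim_H\mu$. Since $\dim_H\mu$ is the supremum of exponents $t'$ for which $\limsup_{r\to0}\mu(B(x,r))/r^{t'}=0$ holds $\mu$-a.e., there is some admissible $t'\geq t$. For $r<1$ we have $r^{-t}\le r^{-t'}$, so the condition for $t'$ gives
\[
\limsup_{r\to0}\frac{\mu(B(x,r))}{r^{t}}=0\qquad\text{for }\mu\text{-a.e.\ }x .
\]
The same comparison gives the condition for $d$ as well.

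Now fix such a point $x$. There is $r_0=r_0(x)\in(0,1)$ with $\mu(B(x,r))\le r^{d}$ for all $0<r\le r_0$. For $r>r_0$ we use $\mu(B(x,r))\le\mu(\R^n)=1\le r_0^{-d}r^{d}$. Hence the choice $c_1(x)=\max\{1,r_0^{-d}\}=r_0^{-d}$ works for all $r>0$.

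The case $d=0$ is trivial: $\mu(B(x,r))\le1=1\cdot r^0$ for every $x$ and every $r>0$, because $\mu$ is a probability measure.

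No step is a real obstacle. The only point needing care is the monotonicity in the exponent when passing from an admissible $t'$ near the supremum down to $d$, and this is handled by restricting to $r<1$ as above.
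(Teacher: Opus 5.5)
Your argument is correct. The local characterization of $\dim_H\mu$ from the preliminaries gives $\mu(B(x,r))\le r^d$ for $r\le r_0(x)$, the bound $\mu(\R^n)=1$ handles $r>r_0(x)$ with $c_1=r_0^{-d}$, and $d=0$ is trivial. The paper gives no proof of its own and simply cites Falconer and Mattila; your derivation is the standard argument behind that citation (the intermediate exponent $t$ is superfluous but harmless).
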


The next lemma involves an standard construction, we include the proof by completeness.

\begin{lemma}\label{lem:regular}
Let $E\subset\R^n$ be a compact set and let $\dim_A E<\lambda\leq n$.
Then there exist $t\in(\dim_A E,\lambda)$, a compact set $F\subset\R^n$
with $E\subset F$, a Borel probability measure $\nu$ with $\supp\nu=F$, and
a constant $c\geq1$ such that
\begin{equation}\label{eq:nu_regular}
c^{-1}r^{t}\leq\nu\bigl(B(x,r)\bigr)\quad\text{for } x\in F,\ 0<r\leq1,
\qquad
\nu\bigl(B(x,r)\bigr)\leq c\,r^{t}\quad\text{for } x\in\R^n,\ 0<r\leq1.
\end{equation}
\end{lemma}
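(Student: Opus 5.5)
Fix $t$ with $\dim_A E<t<\lambda$. The idea is to build $F$ and $\nu$ together through a tree of dyadic cubes in which every surviving cube has exactly $M^t$-many children at each step, up to a bounded factor. Since $t>\dim_A E$, the definition of Assouad dimension (in its dyadic-cube form) gives a constant $C$ such that for every $k\ge0$ and every dyadic cube $Q$ of side $2^{-k}$, the number of dyadic cubes of side $2^{-k-j}$ inside $Q$ that meet $E$ is at most $C2^{js'}$, where $s'\in(\dim_A E,t)$ is fixed.

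To get exact regularity, I work with a large step $j_0$: for a dyadic cube of side $\delta=2^{-k}$, consider its $2^{nj_0}$ subcubes of side $2^{-j_0}\delta$. Choose $j_0$ so large that $C2^{j_0s'}\le 2^{j_0t}\le 2^{nj_0}$; this is possible since $s'<t<\lambda\le n$ (if $t$ is close to $n$ this is still fine, and we may shrink $t$ slightly). The numbers $2^{j_0t}$ need not be integers, so I replace them by $K:=\lceil 2^{j_0 t}\rceil$ (if $K>2^{nj_0}$, decrease $t$ slightly within $(\dim_A E,\lambda)$ so that $K\le 2^{nj_0}$ and $K\geq C2^{j_0s'}$). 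I then redefine $t:=j_0^{-1}\log_2 K$, which still lies in $(\dim_A E,\lambda)$ for $j_0$ large, because $\log_2\lceil 2^{j_0 t_0}\rceil/j_0\to t_0$.

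The construction is recursive. Start from the unit cubes meeting $E$ (finitely many, since $E$ is compact), and put $\ell_0:=0$. At level $\ell$, each selected cube $Q$ of side $2^{-\ell j_0}$ has its children chosen as follows: first take all children meeting $E$ (at most $C2^{j_0s'}\le K$ of them), then add arbitrary further children until there are exactly $K$. Define $\nu$ by giving equal mass to the initial cubes and splitting each selected cube's mass equally among its $K$ selected children. Let $F$ be the intersection over levels of the unions of closed selected cubes. Then $F$ is compact, and $E\subset F$ because every cube meeting $E$ is always selected. Moreover $\nu$ is a probability measure with $\supp\nu=F$, because every selected cube contains points of $F$ and carries positive mass.

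Regularity is then checked level by level. A selected cube of level $\ell$ has mass $a K^{-\ell}=a(2^{-\ell j_0})^t$, with $a$ the mass of an initial cube. Given $0<r\le1$, pick $\ell$ with $2^{-(\ell+1)j_0}\sqrt n\le r<2^{-\ell j_0}\sqrt n$. For $x\in F$, the ball $B(x,r)$ contains the level-$(\ell+1)$ selected cube containing $x$, which gives the lower bound with a constant depending on $j_0,n,a$. For arbitrary $x\in\R^n$, the ball $B(x,r)$ meets at most $c(n)$ cubes of level $\ell$, each of mass $\lesssim r^t$, which gives the upper bound. The main obstacle is the bookkeeping in the choice of $j_0$ and $t$, namely ensuring simultaneously $C2^{j_0s'}\le K\le 2^{nj_0}$ and $t<\lambda$. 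Balls meeting cube boundaries cause no difficulty, since closed dyadic cubes overlap only in null boundary regions and the counting is done up to a dimensional constant.
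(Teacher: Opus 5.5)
Your proposal is correct and is essentially the paper's construction: a dyadic tree with step $j_0$ (the paper's $g$) in which every selected cube has exactly $K$ (the paper's $M$) selected children, including all those meeting $E$, with $t$ redefined as $j_0^{-1}\log_2 K$, mass split equally, and the same level-by-level lower and upper bounds. The one remark to drop is the claim that closed cubes overlap in $\nu$-null sets, which need not hold for arbitrary choices of the extra children; you never need it, since you can assign the masses to the disjoint half-open cubes as the paper does, get $\supp\nu\subset F$ from the fact that $\nu$ is carried by the selected cubes at every level, and get $F\subset\supp\nu$ from your lower bound.
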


\begin{proof}
The statement is invariant under translations and contracting
dilations, up to the value of $c$, so we may assume that
$E\subset[0,1)^n$. Let $s=\dim_A E$, $\varepsilon:=\lambda-s>0$ and
$u:=s+\varepsilon/2<n$. For $j\in\mathbb{N}_0$, let $\mathcal{D}_j$
denote the family of half-open dyadic subcubes of $[0,1)^n$ of side
length $2^{-j}$, and for a bounded set $A$ let $\mathcal{N}_r(A)$ denote
the number of cubes in the mesh of side length $r$, oriented at the
origin, that intersect $A$. By the definition of the Assouad dimension,
there exists $C\geq 1$ such that for all $0<r<R$ and all $x\in E$,
\begin{equation}\label{eq:assouad_bound}
\mathcal{N}_r\bigl(E \cap Q(x,R)\bigr)\leq C \left(\frac{R}{r}\right)^{u},
\end{equation}
where $Q(x,R)$ denotes the cube of side length $R$ centred at $x$.

Fix a positive integer $g$ large enough such that
\begin{equation}\label{def:gap}
    M := \left\lceil C\,2^{u}\,2^{gu} \right\rceil < 2^{gn}
    \qquad\text{and}\qquad
    \frac{u+1+\log_2 C}{g}< \frac{\varepsilon}{2},
\end{equation}
and define $t:=\frac{\log_2 M}{g}$. Since $C\geq 1$, we have
$M\geq 2^{gu}$ and $M\leq C\,2^u\,2^{gu}+1\leq 2C\,2^u\,2^{gu}$, whence
$$s<u\leq t\leq u+\frac{u+1+\log_2 C}{g}<u+\frac{\varepsilon}{2}=\lambda;$$
moreover $t<n$, since $M<2^{gn}$. Set $r_k:=2^{-kg}$ for $k\geq 0$, and
note that
\begin{equation}\label{eq:exact_scaling}
    M^{-k}=r_k^{\,t}\qquad\text{for every }k\geq 0.
\end{equation}

\emph{Construction of $F$.} Let $\mathcal{F}_0:=\{[0,1)^n\}$. Suppose
that for some $k\geq 0$ we have constructed a family of dyadic cubes
$\mathcal{F}_k\subset\mathcal{D}_{kg}$ such that
$E\subset F_k:=\bigcup_{Q\in\mathcal{F}_k}Q$. For each
$Q\in\mathcal{F}_k$, partition $Q$ into its $2^{gn}$ dyadic children in
$\mathcal{D}_{(k+1)g}$, retain all children $Q'$ with
$E\cap Q'\neq\emptyset$, and then add further children of $Q$, chosen
arbitrarily, until exactly $M$ children of $Q$ have been selected. This
is always possible: on the one hand, if $Q\cap E\neq\emptyset$, pick
$x\in Q\cap E$; since $Q\subset Q(x,2r_k)$, inequality
\eqref{eq:assouad_bound} gives
\begin{equation}\label{ineq:children}
    \mathcal{N}_{r_{k+1}}(E\cap Q)
    \leq \mathcal{N}_{r_{k+1}}\bigl(E\cap Q(x,2r_k)\bigr)
    \leq C\,2^u\left(\frac{r_k}{r_{k+1}}\right)^{u}\leq M,
\end{equation}
so all children that intersect $E$ can be retained; on the other hand,
by \eqref{def:gap} we have $M<2^{gn}$, so there are always enough
children available to complete the selection. Let $\mathcal{F}_{k+1}$ be
the family of all selected children of all cubes of $\mathcal{F}_k$, and
$F_{k+1}:=\bigcup_{Q\in\mathcal{F}_{k+1}}Q$. Since every cube of
$\mathcal{D}_{(k+1)g}$ meeting $E$ is a child of some cube of
$\mathcal{F}_k$ (because $E\subset F_k$), we get $E\subset F_{k+1}$, and by
construction $F_{k+1}\subset F_k$ and each cube of $\mathcal{F}_k$
contains exactly $M$ cubes of $\mathcal{F}_{k+1}$; in particular
$\#\mathcal{F}_k=M^k$. Define
\[
F := \bigcap_{k=0}^{\infty} \overline{F_k},
\]
a compact set containing $E$. Since every cube of $\mathcal{F}_k$ has
exactly $M$ selected children in $\mathcal{F}_{k+1}$, setting
\[
\nu(Q):=M^{-k}\qquad \text{for every }Q\in\mathcal{F}_k
\]
defines a consistent mass distribution, which by
\cite[Proposition 1.7]{falconer2013fractal} extends to a Borel probability
measure $\nu$ with $\supp\nu\subset F$. In particular $\nu(F_k)=1$, so
that every cube of $\mathcal{D}_{kg}$ which does not belong to
$\mathcal{F}_k$ is $\nu$-null.

\emph{Upper bound.} Let $x\in\R^n$ and $0<r<1$, and choose $k\geq 0$ such
that $r_{k+1}\leq r<r_k$. Since $B(x,r)\subset Q(x,2r_k)$ and a cube of
side length $2r_k$ meets at most $3^n$ cubes of $\mathcal{D}_{kg}$, each
of which has $\nu$-mass at most $M^{-k}$, we obtain from
\eqref{eq:exact_scaling} and $r_k=2^g r_{k+1}\leq 2^g r$ that
$$\nu(B(x,r))\leq 3^n M^{-k}=3^n r_k^{\,t}\leq 3^n\,2^{gn}\, r^{\,t};$$
for $r=1$ the bound $\nu(B(x,1))\leq 1$ is trivial.

\emph{Lower bound.} Let $x\in F$ and $0<r\leq 1$, and choose $k\geq 0$
with $r_{k+1}\leq \frac{r}{2\sqrt n}<r_k$. Since
$x\in\overline{F_{k+1}}$, which is a finite union of closed cubes, there
exists $Q'\in\mathcal{F}_{k+1}$ with $x\in\overline{Q'}$; as $Q'$ has
diameter $\sqrt n\, r_{k+1}$, we get
$Q'\subset B(x,2\sqrt n\, r_{k+1})\subset B(x,r)$. Hence, using
\eqref{eq:exact_scaling}, $r_{k+1}>2^{-g}\frac{r}{2\sqrt n}$ and $t<n$,
$$\nu(B(x,r))\geq \nu(Q')=r_{k+1}^{\,t}
\geq \left(\frac{2^{-g}}{2\sqrt n}\right)^{n} r^{\,t}.$$
In particular $F\subset\supp\nu$, so that $\supp\nu=F$, and taking
$c:=\max\{3^n 2^{gn},\,(2^{g+1}\sqrt n)^{n}\}$ completes the proof.
\end{proof}

The following lemma sharpens an estimate of Falconer and Mattila
\cite[Corollary 2.3]{falconer1996packing}, replacing the ambient dimension
$n$ in the exponent by the Assouad dimension of the support. The proof follows essentially the same steps as \cite[Corollary 2.3]{falconer1996packing} but using the regular measure provided by Lemma~\ref{lem:regular},
which plays the role of the $n$-dimensional Lebesgue measure in their
argument.

\begin{lemma}\label{lem:growth}
Let $\mu$ be a Borel probability measure on $\R^n$, let $\lambda>N_\mu$
and let $a\in(0,1)$. Then for $\mu$-almost every $x$ there exists
$\rho_0(x)\in(0,1)$ such that
\begin{equation}\label{eq:ball_growth}
\mu\bigl(B(x,r)\bigr)\leq\left(\frac{4r}{\rho}\right)^{\lambda}\mu\bigl(B(x,\rho)\bigr)
\qquad\text{whenever } 0<\rho<\rho_0(x)\ \text{ and }\ \rho^{a}\leq r\leq 1 .
\end{equation}
\end{lemma}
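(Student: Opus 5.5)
The plan is to run the Falconer--Mattila covering argument at dyadic scales, with the regular measure of Lemma~\ref{lem:regular} playing the role of Lebesgue measure, and then to pass from dyadic to arbitrary scales; the factor $4$ in \eqref{eq:ball_growth} is there to absorb that last step.

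First consider the case where $\supp\mu$ is compact, so $N_\mu=\dim_A(\supp\mu)<\lambda$. I would apply Lemma~\ref{lem:regular} with $E=\supp\mu$ and exponent $\lambda$ (or with $\min\{\lambda,n\}$ if $\lambda>n$). This gives $t<\lambda$, a compact $F\supset\supp\mu$ and a measure $\nu$ satisfying \eqref{eq:nu_regular}. If $\supp\mu$ is not compact, then $N_\mu=n<\lambda$, and I would use Lebesgue measure with $t=n$; the same two-sided bounds hold for all $x$ and all $r\le 1$.

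For integers $0\le j\le k$, put $\rho=2^{-k}$, $r=2^{-j}$ and define the bad set
$$A_{j,k}=\bigl\{x\in\supp\mu:\ \mu(B(x,r))>(r/\rho)^{\lambda}\mu(B(x,\rho))\bigr\}.$$
Choose a maximal $\rho$-separated set $\{x_i\}\subset A_{j,k}$. The balls $B(x_i,\rho)$ cover $A_{j,k}$, so
$$\mu(A_{j,k})\le\sum_i\mu(B(x_i,\rho))<(\rho/r)^{\lambda}\sum_i\mu(B(x_i,r))=(\rho/r)^{\lambda}\int\#\{i:|x_i-y|\le r\}\,d\mu(y).$$
To bound the counting function, note that for $y\in\supp\mu$ the points $x_i\in B(y,r)\cap F$ have disjoint balls $B(x_i,\rho/2)$. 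Each such ball has $\nu$-mass at least $c^{-1}(\rho/2)^t$, and all of them lie in $B(y,2r)$, which has $\nu$-mass at most $c(2r)^t$ (the case $2r>1$ only changes constants). Hence the count is at most $C(r/\rho)^t$, and therefore
$$\mu(A_{j,k})\le C\,2^{-(k-j)(\lambda-t)}.$$

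Next I would sum over the relevant pairs, namely $0\le j\le ak+1$. For fixed $k$ this gives at most $(k+2)\,C\,2^{-((1-a)k-1)(\lambda-t)}$, which is summable in $k$. By Borel--Cantelli, for $\mu$-a.e.\ $x$ there is $k_0(x)$ such that $x\notin A_{j,k}$ for all $k\ge k_0$ and all $0\le j\le ak+1$. I also discard the $\mu$-null set of points whose small balls have measure zero.

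Finally, take $\rho_0(x)=2^{-k_0(x)}$, let $0<\rho<\rho_0$ and $\rho^a\le r\le1$. Choose dyadic $\rho'=2^{-k}$ with $\rho/2<\rho'\le\rho$, so $k\ge k_0$, and dyadic $r'=2^{-j}$ with $r\le r'<2r$ and $r'\le 1$. Then $r'\ge r\ge\rho^a\ge\rho'^a$, which gives $j\le ak$. Consequently
$$\mu(B(x,r))\le\mu(B(x,r'))\le (r'/\rho')^\lambda\mu(B(x,\rho'))\le(4r/\rho)^\lambda\mu(B(x,\rho)).$$

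The step I expect to be the main obstacle is the counting bound. It is exactly where the Assouad dimension enters through $\nu$: one needs the lower regularity of $\nu$ at the points $x_i\in\supp\mu\subset F$, and one must get the summable gap $\lambda-t>0$ uniformly in $j$, including the boundary scales near $1$.
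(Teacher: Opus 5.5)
Your proof is correct and follows the paper's route: dyadic bad sets, the regular measure $\nu$ of Lemma~\ref{lem:regular} in place of Lebesgue measure, summation over $j\le ak$, and dyadic rounding of $r$ and $\rho$, which produces the factor $4$. The only real variation is how you bound $\mu(A_{j,k})$: you use a maximal $\rho$-separated subset and a packing count against $\nu$, whereas the paper integrates $\nu(B(x,2^{-q-1}))$ against $\mu$ and applies Fubini twice together with the local estimate \eqref{eq:local_bound}. Both versions use the same ingredients, namely lower regularity of $\nu$ at points of $\supp\mu$ and upper regularity everywhere.

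There is one small slip in your case split. If $\supp\mu$ is compact with $\dim_A(\supp\mu)=n$ and $\lambda>n$, then Lemma~\ref{lem:regular} cannot be applied with exponent $\min\{\lambda,n\}=n$, because it requires $\dim_A E<n$. The fix is to split according to whether $\lambda>n$ or $\lambda\le n$, and to use Lebesgue measure with $t=n$ whenever $\lambda>n$. Your counting argument already handles that case for every $\mu$.
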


\begin{proof}
Since $4r/\rho\geq 1$ in \eqref{eq:ball_growth}, a larger exponent only
weakens the inequality. If $\lambda>n$, this is
\cite[Corollary 2.3]{falconer1996packing}, stated there with the exponent
$n(1+\varepsilon)$, $\varepsilon=\lambda/n-1$; it holds for every Borel
probability measure on $\R^n$. Hence we may assume that
$N_\mu<\lambda\leq n$, so that $E:=\supp\mu$ is compact and
$\dim_A E<\lambda$. Let $t\in(\dim_A E,\lambda)$, $F\supset E$, $\nu$ and
$c\geq1$ be given by Lemma~\ref{lem:regular}, and fix $\eta>0$ with
$t(1+\eta)\leq\lambda$. It suffices to prove \eqref{eq:ball_growth} with
the exponent $t(1+\eta)$ in place of $\lambda$.

For nonnegative integers $p<q$ define
$$A_{p,q}:=\bigl\{x\in E:\ \mu(B(x,2^{-p}))>2^{(q-p)t(1+\eta)}\,\mu(B(x,2^{-q}))\bigr\}.$$
We claim that
\begin{equation}\label{eq:dyadic_bound}
\mu(A_{p,q})\leq 4^{t}c^{2}\,2^{-(q-p)t\eta}.
\end{equation}
First observe that for every $x\in\R^n$,
\begin{equation}\label{eq:local_bound}
\mu\bigl(A_{p,q}\cap B(x,2^{-q-1})\bigr)\leq
2^{(p-q)t(1+\eta)}\,\mu\bigl(B(x,2^{-p+1})\bigr).
\end{equation}
Indeed, if the left-hand side is zero there is nothing to prove;
otherwise pick $y\in A_{p,q}\cap B(x,2^{-q-1})$. Any two points of
$B(x,2^{-q-1})$ lie within distance $2^{-q}$ of each other, so
$A_{p,q}\cap B(x,2^{-q-1})\subset B(y,2^{-q})$, and since $y\in A_{p,q}$
means $\mu(B(y,2^{-q}))<2^{(p-q)t(1+\eta)}\mu(B(y,2^{-p}))$, we get
\begin{align*}
    \mu\bigl(A_{p,q}\cap B(x,2^{-q-1})\bigr)&\leq\mu(B(y,2^{-q}))
    <2^{(p-q)t(1+\eta)}\mu(B(y,2^{-p}))\\
    &\leq 2^{(p-q)t(1+\eta)}\mu(B(x,2^{-p+1})).
\end{align*}
Now we estimate $\mu(A_{p,q})$. By the lower bound in
\eqref{eq:nu_regular}, valid at every point of $E\subset F$,
$$\mu(A_{p,q})\leq c\,2^{(q+1)t}\int_{A_{p,q}}\nu\bigl(B(x,2^{-q-1})\bigr)\,d\mu(x).$$
Writing $\nu(B(x,2^{-q-1}))=\int\ind{\{|x-y|\leq 2^{-q-1}\}}\,d\nu(y)$
and applying Fubini's theorem and \eqref{eq:local_bound},
\begin{align*}
\int_{A_{p,q}}\nu\bigl(B(x,2^{-q-1})\bigr)\,d\mu(x)
&=\int \mu\bigl(A_{p,q}\cap B(y,2^{-q-1})\bigr)\,d\nu(y)\\
&\leq 2^{(p-q)t(1+\eta)}\int\mu\bigl(B(y,2^{-p+1})\bigr)\,d\nu(y).
\end{align*}
Applying Fubini's theorem once more and then the upper bound in
\eqref{eq:nu_regular} (for $p=0$ simply use $\nu(B(x,2))\leq1\leq c\,2^t$),
$$\int\mu\bigl(B(y,2^{-p+1})\bigr)\,d\nu(y)
=\int\nu\bigl(B(x,2^{-p+1})\bigr)\,d\mu(x)
\leq c\,2^{t}\,2^{-pt}.$$
Combining,
$$\mu(A_{p,q})\leq c\,2^{(q+1)t}\, 2^{(p-q)t(1+\eta)}\, c\,2^{t}\,2^{-pt}
= 4^{t}c^{2}\,2^{-(q-p)t\eta},$$
which proves \eqref{eq:dyadic_bound}.

Now, for $q_0\in\mathbb{N}$ let
$$
M_{q_0}:=\mu\Bigl(\bigcup\bigl\{A_{p,q}\,:\,p,q\in\mathbb{N}_0,\ 0\leq p\leq aq,\ q\geq q_0\bigr\}\Bigr).
$$
By \eqref{eq:dyadic_bound}, using $q-p\geq q-aq=(1-a)q$ and summing the
geometric series in $p$,
$$M_{q_0}\leq 4^{t}c^{2}\sum_{q=q_0}^{\infty}\sum_{p=0}^{\lfloor aq\rfloor}
2^{-(q-p)t\eta}
\leq 4^{t}c^{2}\,\frac{2^{t\eta}}{2^{t\eta}-1}\sum_{q=q_0}^{\infty}2^{-q(1-a)t\eta}
=c_{1}\,2^{-q_{0}(1-a)t\eta},$$
where $c_{1}$ depends only on $a,\eta,t$ and $c$.

Finally, let $\rho_{0}:=2^{-q_{0}-1}$, and suppose that $x\in E$,
$0<\rho<\rho_{0}$, $\rho^{a}\leq r\leq 1$ and
$$\mu(B(x,r))>\left(\frac{4r}{\rho}\right)^{t(1+\eta)}\mu(B(x,\rho)).$$
Choose integers $p,q$ with $2^{-p-1}<r\leq 2^{-p}$ and
$2^{-q}\leq\rho<2^{-q+1}$. Then $p\geq0$, $2^{-p}\geq r\geq\rho^{a}\geq
2^{-aq}$ gives $p\leq aq$, and $2^{-q}\leq\rho<2^{-q_{0}-1}$ gives
$q>q_{0}$. Moreover $\frac{4r}{\rho}\geq\frac{4\cdot 2^{-p-1}}{2^{-q+1}}
=2^{q-p}$, so
$$
2^{(q-p)t(1+\eta)}\mu(B(x,2^{-q}))\leq \left(\tfrac{4r}{\rho}\right)^{t(1+\eta)}\mu(B(x,\rho))
<\mu(B(x,r))\leq \mu(B(x,2^{-p})),
$$
that is, $x\in A_{p,q}$. Hence the set of points $x\in E$ for which
\eqref{eq:ball_growth} fails, with $t(1+\eta)$ in place of $\lambda$, for
some $0<\rho<\rho_{0}$ and $\rho^{a}\leq r\leq 1$, has $\mu$-measure at
most $M_{q_{0}}\leq c_{1}2^{-q_{0}(1-a)t\eta}$. These exceptional sets
decrease as $q_{0}\to\infty$ and their measures tend to $0$; hence their
intersection is $\mu$-null, which means that for $\mu$-a.e.\ $x$ there
exists $\rho_{0}(x)\in(0,1)$ such that
$\mu(B(x,r))\leq(4r/\rho)^{t(1+\eta)}\mu(B(x,\rho))$ for all
$0<\rho<\rho_{0}(x)$ and $\rho^{a}\leq r\leq 1$.
\end{proof}

\begin{remark}\label{rem:uniform}
The estimate \eqref{eq:ball_growth} holds for \emph{every}
$\rho\in(0,\rho_0(x))$, and not only along a sequence of scales depending
on $x$. This uniformity will be used repeatedly.
\end{remark}

\section{Packing intermediate dimensions}\label{sec:pid}

In this section we introduce the packing intermediate dimensions and the
quasi-Hausdorff dimension of a measure, and we establish their basic
properties.

\begin{definition}\label{def:pid}
Let $\theta\in(0,1]$, $0\leq t\leq n$ and $r\in(0,1)$. For $z\in\R^n$ let
\[
\phi^{t,n}_{r,\theta}(z)=\min\bigl\{1,\ r^t|z|^{-t},\ r^{\theta(n-t)+t}|z|^{-n}\bigr\}
=\begin{cases}
1, & |z|\leq r,\\
r^t|z|^{-t}, & r<|z|\leq r^\theta,\\
r^{\theta(n-t)+t}|z|^{-n}, & |z|>r^\theta,
\end{cases}
\]
and write $\phi^{t,n}_{r,\theta}(u)$, $u\geq0$, for its value at any $z$
with $|z|=u$. For a nonzero finite Borel measure $\nu$ on $\R^n$ let
$$F^\nu_{t,n,\theta}(x,r)=\int\phi^{t,n}_{r,\theta}(x-y)\,d\nu(y),$$
and define the \emph{$\theta$-packing intermediate dimension} of $\nu$ by
\[
\dim_{P,\theta}\nu=\sup\Big\{t\in[0,n] : \liminf_{r\to0}r^{-t}F^\nu_{t,n,\theta}(x,r)=0\ \text{ for $\nu$-a.e.\ } x\in\R^n\Big\}.
\]
Since $0<\phi^{t,n}_{r,\theta}\leq1$, the definition makes sense for every
finite Borel measure, and its value does not change if $\nu$ is
multiplied by a positive constant.
\end{definition}

\begin{remark}\label{rem:basic}
Let $\nu$ be a nonzero finite Borel measure on $\R^n$,
$\theta\in(0,1]$, $0\leq t\leq n$ and $r\in(0,1)$.
\begin{enumerate}[(a)]
\item If $0\leq t'\leq t$, then
\[
r^{-t'}\phi^{t',n}_{r,\theta}\leq r^{\theta(t-t')}\,r^{-t}\phi^{t,n}_{r,\theta}.
\]
Consequently, $\liminf_{r\to0}r^{-t}F^\nu_{t,n,\theta}(x,r)=0$ for
$\nu$-a.e.\ $x$ and every $t<\dim_{P,\theta}\nu$.
\item If $\theta\leq\theta'\leq1$, then
$\phi^{t,n}_{r,\theta'}\leq\phi^{t,n}_{r,\theta}$. Hence
$\theta\mapsto\dim_{P,\theta}\nu$ is nondecreasing on $(0,1]$.
\item $\phi^{t,n}_{r,1}(z)=\min\{1,r^n|z|^{-n}\}$, so that
$F^\nu_{t,n,1}=F^\nu_n$ and $\dim_{P,1}\nu=\dim_P^n\nu=\dim_P\nu$ by
\eqref{eq:FHcor3}.
\item If $u\in[r,r^\theta]$, then
$\ind{B(0,u)}\leq (u/r)^t\,\phi^{t,n}_{r,\theta}$, and therefore
\begin{equation}\label{eq:ballkernel}
\nu(B(x,u))\leq \Big(\frac ur\Big)^t F^\nu_{t,n,\theta}(x,r)\qquad\text{for every } x\in\R^n.
\end{equation}
\item For every $x\in\R^n$,
\begin{equation}\label{eq:layer}
F^\nu_{t,n,\theta}(x,r)=t\,r^t\int_r^{r^\theta}u^{-t-1}\nu(B(x,u))\,du+n\,r^{\theta(n-t)+t}\int_{r^\theta}^\infty u^{-n-1}\nu(B(x,u))\,du .
\end{equation}
\end{enumerate}

\end{remark}

We now give an equivalent characterization of the packing intermediate
dimensions that dispenses with the kernels, $\dim_{P,\theta}\nu$ is the
critical exponent for Frostman-type estimates of $\nu(B(x,r))$ along
windows of scales of the form $[\rho,\rho^{\theta}]$.

\begin{theorem}\label{thm:windows}
Let $\nu$ be a nonzero finite Borel measure on $\R^n$ and let
$\theta\in(0,1]$.
\begin{enumerate}[(i)]
\item If $0\leq t<\dim_{P,\theta}\nu$, then for $\nu$-almost every $x$
and every $\varepsilon>0$ there are arbitrarily small $\rho\in(0,1)$ such
that
\[
\nu(B(x,r))\leq \varepsilon\,r^t\qquad\text{for all } r\in[\rho,\rho^\theta].
\]
\item $\dim_{P,\theta}\nu$ is the supremum of the numbers $t\geq0$ with
the following property: for $\nu$-almost every $x$, for every
$\varepsilon>0$ and every $r_0>0$ there is $\rho\in(0,r_0)$ such that
$\nu(B(x,r))\leq\varepsilon r^t$ for all $r\in[\rho,\rho^\theta]$.
\item $\dim_{P,\theta}\nu$ is also the supremum of the numbers $t\geq0$
such that, for $\nu$-almost every $x$, there are arbitrarily small
$\rho\in(0,1)$ with $\nu(B(x,r))\leq r^t$ for all $r\in[\rho,\rho^\theta]$.
\end{enumerate}
\end{theorem}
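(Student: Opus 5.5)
Part~(i) comes straight from the kernel definition. Parts~(ii) and~(iii) follow once we show that the weakest window property, the one in~(iii), already forces $\liminf r^{-t'}F^\nu_{t',n,\theta}(x,r)=0$ for every $t'<t$; the growth estimate of Lemma~\ref{lem:growth} controls the scales outside the window.

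\emph{Step 1: proof of (i).} Let $t<\dim_{P,\theta}\nu$. By Remark~\ref{rem:basic}(a), for $\nu$-a.e.\ $x$ we have $\liminf_{r\to0}r^{-t}F^\nu_{t,n,\theta}(x,r)=0$. Given $\varepsilon>0$ and $r_0>0$, pick $\rho<r_0$ with $\rho^{-t}F^\nu_{t,n,\theta}(x,\rho)\leq\varepsilon$. By \eqref{eq:ballkernel}, every $u\in[\rho,\rho^\theta]$ satisfies
\[
\nu(B(x,u))\leq u^{t}\,\rho^{-t}F^\nu_{t,n,\theta}(x,\rho)\leq\varepsilon u^t .
\]
This gives~(i). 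Let $s_\varepsilon$ and $s_1$ be the suprema in (ii) and (iii). Then (i) gives $\dim_{P,\theta}\nu\leq s_\varepsilon$, and taking $\varepsilon=1$ gives $s_\varepsilon\leq s_1$. It remains to prove $s_1\leq\dim_{P,\theta}\nu$.

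\emph{Step 2: the window controls the kernel.} Let $t$ have the property in~(iii), with $t\le n$ (as $\dim_{P,\theta}\nu\leq n$ anyway), and fix $t'<t$. By scaling we may assume $\nu$ is a probability measure. Choose $\lambda>n$ and $a\in(0,1)$ with $\lambda(1-a)<t-t'$ and $\lambda-n<t-t'$. Lemma~\ref{lem:growth} holds for every Borel probability measure when $\lambda>n$. Apply it with the scale $r^\theta$ in place of $\rho$. Then for $\nu$-a.e.\ $x$, simultaneously along the sequence of $\rho$'s from~(iii), we have for $r$ small ($r=\rho$, so $\nu(B(x,r^\theta))\le r^{\theta t}$):
\[
\nu(B(x,u))\leq 4^\lambda r^{-\theta\lambda}\max\{u,r^{\theta a}\}^{\lambda}\,r^{\theta t}\qquad\text{for } u\in[r^\theta,1].
\]
For $u\in[r^\theta,r^{\theta a}]$ this uses monotonicity, $\nu(B(x,u))\le\nu(B(x,r^{\theta a}))$.

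\emph{Step 3: estimate the layer-cake formula.} Insert these bounds into \eqref{eq:layer} with exponent $t'$. The inner window term is at most $t'r^{t'}\int_r^{r^\theta}u^{t-t'-1}\,du$, so after multiplying by $r^{-t'}$ it is $\lesssim r^{\theta(t-t')}\to0$. For the outer term, multiplied by $r^{-t'}$, we get the prefactor $r^{\theta(n-t')}$ times three integrals:
\begin{itemize}
\item Over $[1,\infty)$: the integral is bounded, so this piece is $\lesssim r^{\theta(n-t')}\to0$.
\item Over $[r^{\theta a},1]$: the integral is $\lesssim r^{\theta(t-\lambda)}\int_0^1u^{\lambda-n-1}\,du$, so this piece is $\lesssim r^{\theta(n-\lambda+t-t')}\to0$.
\item Over $[r^\theta,r^{\theta a}]$: the integral is $\lesssim r^{-\theta n}\,r^{\theta(a\lambda-\lambda+t)}$, so this piece is $\lesssim r^{\theta(t-t'-\lambda(1-a))}\to0$.
\end{itemize}
Hence $\liminf r^{-t'}F^\nu_{t',n,\theta}(x,r)=0$ $\nu$-a.e., so $t'\le\dim_{P,\theta}\nu$. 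Letting $t'\uparrow t$ gives $s_1\le\dim_{P,\theta}\nu$.

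The main obstacle is the scales just above the window, $u\in[r^\theta,r^{\theta a}]$. The window gives no direct information there, and the crude bound $\nu(B(x,u))\le1$ loses the factor $r^{-\theta t'}$. The plan handles this by taking $a$ close to $1$ in Lemma~\ref{lem:growth}, and one should check that the parameter choices are compatible, which they are since $a$ and $\lambda$ are free.
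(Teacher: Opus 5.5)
Your proof is correct and is essentially the paper's argument. It uses the same split of \eqref{eq:layer} into the window, $[\rho^\theta,\rho^{a\theta}]$, $[\rho^{a\theta},1]$ and $[1,\infty)$, the same parameters $\lambda\in(n,n+t-t')$ and $(1-a)\lambda<t-t'$, and Lemma~\ref{lem:growth} applied at the top of the window. The only difference is that you start from the window property in (iii) with constant $1$ and keep decay factors such as $\rho^{\theta(t-t')}$. The paper instead proves (ii) by letting $\varepsilon\to0$, and then reduces (iii) to (ii) through $r^{t}\leq\rho^{\theta(t-t')}r^{t'}$ on the window.

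One small correction: the fact that $\dim_{P,\theta}\nu\leq n$ does not by itself give $t\leq n$. Instead, observe, as the paper does, that the property in (iii) gives $\liminf_{r\to0}r^{-t}\nu(B(x,r))\leq1$ for $\nu$-a.e.\ $x$. Hence $t\leq\dim_P\nu\leq n$ by \eqref{eq:hu} and \eqref{eq:chain}.
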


\begin{proof}
Statements (i) and (ii) do not change if $\nu$ is multiplied by a
positive constant, so in their proof we may assume that $\nu$ is a
probability measure.

(i) By Remark~\ref{rem:basic}(a), for $\nu$-almost every $x$ we have
$\liminf_{r\to0}r^{-t}F^\nu_{t,n,\theta}(x,r)=0$. For such $x$ and
$\varepsilon>0$ there are arbitrarily small $\rho\in(0,1)$ with
$F^\nu_{t,n,\theta}(x,\rho)\leq\varepsilon\rho^t$, and by
\eqref{eq:ballkernel}, $\nu(B(x,r))\leq(r/\rho)^t\varepsilon\rho^t=\varepsilon r^t$
for every $r\in[\rho,\rho^\theta]$.

(ii) Let $t^*$ denote the supremum in (ii). By (i),
$\dim_{P,\theta}\nu\leq t^*$. Conversely, let $t>0$ have the property in
(ii). Then $\liminf_{r\to0}r^{-t}\nu(B(x,r))=0$ for $\nu$-a.e.\ $x$, so
that $t\leq\dim_P\nu\leq n$ by \eqref{eq:hu} and \eqref{eq:chain}. Let
$0\leq t'<t$; we show that $t'\leq\dim_{P,\theta}\nu$. Fix
$\lambda\in(n,n+t-t')$ and $a\in(0,1)$ with $(1-a)\lambda<t-t'$, and let
$\rho_0(x)$ be given by Lemma~\ref{lem:growth} for these parameters
(note that $\lambda>n\geq N_\nu$). Fix $x$ in the intersection of the two
sets of full measure, let $\varepsilon\in(0,1)$, and choose $\rho\in(0,1)$
with $\rho^\theta<\rho_0(x)$ and $\nu(B(x,r))\leq\varepsilon r^t$ for all
$r\in[\rho,\rho^\theta]$. By \eqref{eq:layer}, with $t'$ in place of $t$,
we have $F^\nu_{t',n,\theta}(x,\rho)=J_1+J_2+J_3+J_4$, where $J_1$ is the
first term in \eqref{eq:layer} and $J_2,J_3,J_4$ are the parts of the
second term corresponding to the intervals $[\rho^\theta,\rho^{a\theta}]$,
$[\rho^{a\theta},1]$ and $[1,\infty)$. The window condition gives
\[
J_1\leq t'\varepsilon\rho^{t'}\int_\rho^{\rho^\theta}u^{t-t'-1}\,du\leq\frac{t'}{t-t'}\,\varepsilon\rho^{t'} .
\]
For $u\in[\rho^\theta,\rho^{a\theta}]$, applying \eqref{eq:ball_growth} at
the scale $\rho^\theta$ and then the window condition at $r=\rho^\theta$,
$$\nu(B(x,u))\leq\nu(B(x,\rho^{a\theta}))\leq 4^\lambda\rho^{-(1-a)\theta\lambda}\,\nu(B(x,\rho^\theta))\leq4^\lambda\varepsilon\,\rho^{\theta t-(1-a)\theta\lambda},$$
so that, since $\int_{\rho^\theta}^\infty u^{-n-1}du=\rho^{-\theta n}/n$,
\[
J_2\leq 4^\lambda\varepsilon\,\rho^{t'+\theta(t-t')-(1-a)\theta\lambda}\leq4^\lambda\varepsilon\rho^{t'} .
\]
For $u\in[\rho^{a\theta},1]$, in the same way,
$\nu(B(x,u))\leq(4u/\rho^\theta)^\lambda\nu(B(x,\rho^\theta))\leq4^\lambda\varepsilon u^\lambda\rho^{\theta(t-\lambda)}$,
and since $\lambda>n$,
\[
J_3\leq n\,\rho^{\theta(n-t')+t'}\,4^\lambda\varepsilon\,\rho^{\theta(t-\lambda)}\int_0^1u^{\lambda-n-1}\,du
=\frac{n4^\lambda}{\lambda-n}\,\varepsilon\,\rho^{t'+\theta(t-t')-\theta(\lambda-n)}\leq\frac{n4^\lambda}{\lambda-n}\,\varepsilon\rho^{t'} .
\]
Finally, $\nu(B(x,u))\leq1$ gives $J_4\leq\rho^{t'+\theta(n-t')}$.
Therefore
\[
F^\nu_{t',n,\theta}(x,\rho)\leq C\varepsilon\rho^{t'}+\rho^{t'+\theta(n-t')},\qquad
C=\frac{t'}{t-t'}+4^\lambda+\frac{n4^\lambda}{\lambda-n},
\]
where $C$ depends neither on $\varepsilon$ nor on $\rho$. Since $t'<n$
and $\rho$ can be taken arbitrarily small, we get
$\liminf_{r\to0}r^{-t'}F^\nu_{t',n,\theta}(x,r)\leq C\varepsilon$, and
since $\varepsilon$ was arbitrary, this $\liminf$ vanishes. This holds
for $\nu$-almost every $x$, so $t'\leq\dim_{P,\theta}\nu$. Letting
$t'\to t$ gives $t\leq\dim_{P,\theta}\nu$, and therefore
$t^*\leq\dim_{P,\theta}\nu$.

(iii) Let $t^\circ$ denote the supremum in (iii). By (i) with
$\varepsilon=1$, $\dim_{P,\theta}\nu\leq t^\circ$. Conversely, let $t$
have the property in (iii) and $t'<t$. For $r\in[\rho,\rho^\theta]$ we have
$\nu(B(x,r))\leq r^{t-t'}r^{t'}\leq\rho^{\theta(t-t')}r^{t'}$, and
$\rho^{\theta(t-t')}\to0$ as $\rho\to0$, so $t'$ has the property in
(ii). Hence $t'\leq\dim_{P,\theta}\nu$, and $t^\circ\leq\dim_{P,\theta}\nu$.
\end{proof}

\begin{remark}\label{rem:windows}
\begin{enumerate}[(a)]
\item By Theorem~\ref{thm:windows}(ii), the value of
$\dim_{P,\theta}\nu$ depends only on the function
$(x,r)\mapsto\nu(B(x,r))$ on $\supp\nu$. In particular, it does not depend on the dimension of the ambient space: if
$\R^n$ is regarded as a subspace of $\R^{n'}$, $n'\geq n$, the packing
intermediate dimensions of $\nu$ computed in $\R^{n'}$, with the kernels
$\phi^{t,n'}_{r,\theta}$, coincide with those computed in $\R^n$. For
$\theta=1$ the window $[\rho,\rho^\theta]$ reduces to the single scale
$\rho$, and Theorem~\ref{thm:windows}(ii) reduces to \eqref{eq:hu}, in
accordance with Remark~\ref{rem:basic}(c).
\item The condition in Theorem~\ref{thm:windows}(ii), written with
windows $[\delta^{1/\theta},\delta]$, $\delta=\rho^\theta$, involves the
same two-scale Frostman-type condition used in
\cite[Definition 3.1]{ANGELINI2026130039} to define the Minkowski
intermediate dimensions of a measure. The difference lies in the
quantifiers. There, the constant and the scales are required to be
uniform over the support (see also \cite[Lemma 3.3]{ANGELINI2026130039}),
whereas here the scales are allowed to depend on the point $x$. We also
note that the kernels $\phi^{t,n}_{r,\theta}$ depend on the ambient
dimension $n$ rather than on the Assouad dimension of $\nu$, thus no regularity hypothesis on $\nu$
is required, whereas the capacity-theoretic characterization of the
intermediate dimensions of measures in \cite{ANGELINI2026130039} requires
$\dim_A\nu<\infty$, that is, $\nu$ doubling (see
\cite[Lemma 4.1.1]{fraser2020assouad}).
\end{enumerate}
\end{remark}

As a first consequence of the kernel formulation, the packing
intermediate dimensions do not increase under Lipschitz maps.

\begin{proposition}\label{prop:lipschitz}
Let $\nu$ be a nonzero finite Borel measure on $\R^n$, let
$\theta\in(0,1]$, and let $f:\supp\nu\to\R^m$ be a Lipschitz map. Then
$\dim_{P,\theta}\nu_f\leq\dim_{P,\theta}\nu$. In particular,
$\dim_{P,\theta}\nu_V\leq\dim_{P,\theta}\nu$ for every $V\in G(n,m)$.
\end{proposition}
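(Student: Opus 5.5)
Since the statement is phrased for Lipschitz maps and the remark before it mentions the kernel formulation, I would nonetheless argue mostly through the window characterization of Theorem~\ref{thm:windows}. That characterization depends only on $\nu(B(x,r))$, and it is insensitive to the ambient dimension (Remark~\ref{rem:windows}(a)). This avoids comparing kernels in $\R^n$ and $\R^m$ with different exponents $n,m$.

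Let $L\geq1$ be a Lipschitz constant for $f$. The key inclusion is $f^{-1}(B(f(x),r))\supset B(x,r/L)\cap\supp\nu$. Hence
\[
\nu_f\bigl(B(f(x),r)\bigr)\geq\nu\bigl(B(x,r/L)\bigr)\qquad\text{for every }x\in\supp\nu,\ r>0.
\]

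Let $t<\dim_{P,\theta}\nu_f$ with $t>0$ (if $\dim_{P,\theta}\nu_f=0$ there is nothing to prove). By Theorem~\ref{thm:windows}(i), for $\nu_f$-a.e.\ $y$, every $\varepsilon>0$ and arbitrarily small $\rho$, we have $\nu_f(B(y,r))\leq\varepsilon r^t$ for all $r\in[\rho,\rho^\theta]$. The set of good $y$ has full $\nu_f$-measure, so its preimage has full $\nu$-measure. For $x$ in this preimage, taking $y=f(x)$ and $s=r/L$ gives
\[
\nu(B(x,s))\leq\varepsilon L^t s^t\qquad\text{for } s\in[\rho/L,\rho^\theta/L].
\]

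The only obstacle is that the window $[\rho/L,\rho^\theta/L]$ is not of the form $[\sigma,\sigma^\theta]$. Set $\sigma=\rho/L$. Then $\sigma^\theta=\rho^\theta L^{-\theta}\geq\rho^\theta/L$, so the window falls short at the top. I would fix this by shrinking the exponent, as in the proof of Theorem~\ref{thm:windows}(iii). Choose $\theta'<\theta$ slightly, but keep the target $\theta$ by working with a smaller $\sigma$.

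More concretely, given $\rho$, take $\sigma:=(\rho^\theta/L)^{1/\theta}=\rho L^{-1/\theta}\leq\rho/L$. The window $[\sigma,\sigma^\theta]=[\rho L^{-1/\theta},\rho^\theta/L]$ is then contained in $[\rho/L,\rho^\theta/L]$ except for the lower part $[\rho L^{-1/\theta},\rho/L]$. On that lower part I would use the trivial monotonicity $\nu(B(x,s))\leq\nu(B(x,\rho/L))\leq\varepsilon\rho^t=\varepsilon L^{t/\theta}\sigma^t\leq \varepsilon L^{t/\theta}s^t$ for $s\geq\sigma$.

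Thus $\nu(B(x,s))\leq\varepsilon L^{t/\theta}s^t$ on all of $[\sigma,\sigma^\theta]$. Since $\varepsilon$ is arbitrary and $\sigma\to0$ as $\rho\to0$, the point $t$ has the property of Theorem~\ref{thm:windows}(ii) for $\nu$. Therefore $t\leq\dim_{P,\theta}\nu$, and letting $t\uparrow\dim_{P,\theta}\nu_f$ finishes the proof.

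The measurability of the good set is not an issue, since we only need it to contain a $\nu_f$-full Borel set, whose preimage is $\nu$-full. The projection case follows because $\pi_V$ is $1$-Lipschitz, together with the isometric identification of $V$ with $\R^m$ noted in the preliminaries.
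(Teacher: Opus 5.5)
Your proof is correct, but it takes a different route from the paper. The paper stays with the kernels. It uses Remark~\ref{rem:windows}(a) to compute both dimensions in a common space $\R^k$, $k=\max\{n,m\}$. It then uses two facts: $\phi^{t,k}_{r,\theta}$ is radially nonincreasing, and each branch of the minimum loses at most a factor $L^{-k}$ under $z\mapsto Lz$. Together these give $F^{\nu_f}_{t,k,\theta}(f(x),r)\geq L^{-k}F^{\nu}_{t,k,\theta}(x,r)$ at the \emph{same} scale $r$. The conclusion follows by a contrapositive argument on the positive-measure set where $\liminf_{r\to0}r^{-t}F^\nu_{t,k,\theta}(x,r)>0$.

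In that approach the Lipschitz constant is absorbed into a multiplicative constant, so no window bookkeeping is needed. The price is that both measures must be placed in a common ambient space, because the kernels depend on it.

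Your approach uses only the ball inequality $\nu(B(x,r/L))\leq\nu_f(B(f(x),r))$ together with Theorem~\ref{thm:windows}(i),(ii). These are intrinsically independent of the ambient dimension, so no embedding is needed. Your price is that rescaling distorts the window. You repair this correctly by taking $\sigma=\rho L^{-1/\theta}$ and using monotonicity on $[\sigma,\rho/L]$; the factor $L^{t/\theta}$ is harmless since $\varepsilon$ is arbitrary.

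Both arguments ultimately rest on Theorem~\ref{thm:windows}(ii), the paper's through Remark~\ref{rem:windows}(a). The aside about choosing $\theta'<\theta$ and the reference to the proof of Theorem~\ref{thm:windows}(iii) play no role in your argument and can be removed.
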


\begin{proof}
By Remark~\ref{rem:windows}(a), we may compute both dimensions in $\R^k$,
$k=\max\{n,m\}$, regarding $\R^n$ and $\R^m$ as subspaces of $\R^k$. Let
$L\geq1$ be such that $|f(x)-f(y)|\leq L|x-y|$ for $x,y\in\supp\nu$.
Since $\phi^{t,k}_{r,\theta}$ is radially nonincreasing, the expression
of the kernel as a minimum gives, for $x,y\in\supp\nu$ and $0\leq t\leq k$,
$$\phi^{t,k}_{r,\theta}\bigl(f(x)-f(y)\bigr)\geq\phi^{t,k}_{r,\theta}\bigl(L(x-y)\bigr)\geq L^{-k}\,\phi^{t,k}_{r,\theta}(x-y),$$
and integrating with respect to $\nu(y)$,
$F^{\nu_f}_{t,k,\theta}(f(x),r)\geq L^{-k}F^{\nu}_{t,k,\theta}(x,r)$ for
every $x\in\supp\nu$. Now let $\dim_{P,\theta}\nu<t\leq k$ (if
$\dim_{P,\theta}\nu=k$ there is nothing to prove). By definition, the set
$A$ of points $x\in\supp\nu$ with
$\liminf_{r\to0}r^{-t}F^\nu_{t,k,\theta}(x,r)>0$ has positive
$\nu$-measure, and
$A\subset f^{-1}\bigl(\{w:\liminf_{r\to0}r^{-t}F^{\nu_f}_{t,k,\theta}(w,r)>0\}\bigr)$.
Hence the latter set has positive $\nu_f$-measure, so that $t$ does not
belong to the set defining $\dim_{P,\theta}\nu_f$, and by
Remark~\ref{rem:basic}(a), $\dim_{P,\theta}\nu_f\leq t$.
\end{proof}

the proof of the next lemma is straightforward.

\begin{lemma}\label{lem:compare}
Let $\nu$ be a nonzero finite Borel measure on $\R^n$. For every
$\theta\in(0,1]$ we have $\dim_H\nu\leq\dim_{P,\theta}\nu\leq\dim_P\nu$.
\end{lemma}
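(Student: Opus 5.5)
The plan is to prove the two inequalities separately, using the characterization of Theorem~\ref{thm:windows} for the lower bound and the kernel comparison of Remark~\ref{rem:basic} for the upper bound.

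\emph{Upper bound $\dim_{P,\theta}\nu\leq\dim_P\nu$.} By Remark~\ref{rem:basic}(b), the map $\theta\mapsto\dim_{P,\theta}\nu$ is nondecreasing, so $\dim_{P,\theta}\nu\leq\dim_{P,1}\nu$. By Remark~\ref{rem:basic}(c), $\dim_{P,1}\nu=\dim_P\nu$. Alternatively, one can argue directly: if $t<\dim_{P,\theta}\nu$, then Theorem~\ref{thm:windows}(i) applied at the single scale $r=\rho$ gives $\liminf_{r\to0}r^{-t}\nu(B(x,r))=0$ for $\nu$-a.e.\ $x$. Hence $t\leq\dim_P\nu$ by \eqref{eq:hu}.

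\emph{Lower bound $\dim_H\nu\leq\dim_{P,\theta}\nu$.} We may assume $\dim_H\nu>0$. Fix $0<t<\dim_H\nu$ and choose $d\in(t,\dim_H\nu)$. By Lemma~\ref{lem:frostman}, applied after normalizing $\nu$ to a probability measure (which changes no dimension), for $\nu$-a.e.\ $x$ there is $c_1(x)$ with $\nu(B(x,r))\leq c_1r^d$ for all $r>0$. For every $\rho\in(0,1)$ and every $r\in[\rho,\rho^\theta]$ this gives
\[
\nu(B(x,r))\leq c_1r^{d-t}r^t\leq c_1\rho^{\theta(d-t)}r^t .
\]
Given $\varepsilon>0$, the factor $c_1\rho^{\theta(d-t)}$ is at most $\varepsilon$ for all sufficiently small $\rho$. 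So the window property of Theorem~\ref{thm:windows}(ii) holds for $t$ (for every small $\rho$, not merely arbitrarily small ones). Therefore $t\leq\dim_{P,\theta}\nu$, and letting $t\to\dim_H\nu$ finishes the proof.

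No real obstacle is expected. The one point to check is that Theorem~\ref{thm:windows}(ii) is applied to the normalized measure, which the proof of that theorem already permits.
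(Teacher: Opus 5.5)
Your proof is correct. It is the straightforward argument the paper has in mind (the paper omits the proof): the upper bound comes from the monotonicity in Remark~\ref{rem:basic}(b) together with $\dim_{P,1}\nu=\dim_P\nu$ from Remark~\ref{rem:basic}(c), and the lower bound from feeding the Frostman estimate of Lemma~\ref{lem:frostman} into the window characterization of Theorem~\ref{thm:windows}(ii), which does not depend on this lemma, so there is no circularity. The only implicit point, used when $\dim_H\nu=0$, is that $\dim_{P,\theta}\nu\geq0$, i.e.\ the convention $\sup\emptyset=0$ that is already implicit in the paper's definitions of these dimensions.
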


The following universal lower bound is a key tool for estimating the
packing intermediate dimensions. Recall the definition of $N_\mu$ in
\eqref{eq:Nmu}.

\begin{lemma}\label{lem:floor}
Let $\mu$ be a Borel probability measure on $\R^n$, and write
$P=\dim_P\mu$ and $N=N_\mu$. Then, for every $\theta\in(0,1]$,
\begin{equation}\label{eq:floor}
\dim_{P,\theta}\mu\ \geq\ \sigma_\theta(P):=\frac{\theta\,N\,P}{N-(1-\theta)P},
\end{equation}
where $\sigma_\theta(P):=0$ if $P=0$. Note that if $P>0$, then
$N-(1-\theta)P\geq\theta N>0$.
\end{lemma}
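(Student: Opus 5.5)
The plan is to verify the window criterion of Theorem~\ref{thm:windows}(ii) for every exponent slightly below $\sigma_\theta(P)$. The idea is to place a single good scale of the packing dimension inside a suitably positioned window, and to control the part of the window above that scale with the growth estimate of Lemma~\ref{lem:growth}.

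If $P=0$ there is nothing to prove, so assume $P>0$. Fix $0<p<P$ and $\lambda>N$ with $\lambda\leq n+1$, and let $\theta\in(0,1]$. Define
\[
\beta:=\frac{\theta\lambda}{\lambda-(1-\theta)p},\qquad t:=\beta p=\frac{\theta\lambda p}{\lambda-(1-\theta)p}.
\]
One checks that $\theta\leq\beta<1$, since $p<\lambda$. For $\theta<1$ put $a:=\theta/\beta\in(0,1)$; the case $\theta=1$ is just \eqref{eq:hu}.

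For $\mu$-a.e.\ $x$ two facts hold. First, by \eqref{eq:hu}, $\liminf_{r\to0}r^{-p}\mu(B(x,r))=0$. Second, Lemma~\ref{lem:growth} applies with parameters $\lambda,a$ and gives some $\rho_0(x)$. Fix such an $x$ and $\varepsilon>0$, and pick an arbitrarily small $\rho'<\rho_0(x)$ with $\mu(B(x,\rho'))\leq\varepsilon\rho'^p$. Define $\rho$ by $\rho'=\rho^\beta$, so that $\rho\leq\rho'\leq\rho^\theta=\rho'^{a}$.

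The window $[\rho,\rho^\theta]$ is split at $\rho'$.
\begin{itemize}
\item On $[\rho,\rho']$, monotonicity gives $\mu(B(x,r))\leq\varepsilon\rho^{\beta p}=\varepsilon\rho^t\leq\varepsilon r^t$.
\item On $[\rho',\rho^\theta]$, Lemma~\ref{lem:growth} applies because $\rho'^a\geq r$ would be needed; more precisely, \eqref{eq:ball_growth} is monotone in $r$, so it can be applied at $r'=\rho'^a$ and then compared with the scale $r$ (or used directly, since only $r\leq\rho'^a$ matters once the inequality is rewritten with $\mu(B(x,r))\leq\mu(B(x,\rho'^a))$). The cleaner route is to note that the proof of Lemma~\ref{lem:growth} in fact yields the estimate for all $\rho\leq r\leq 1$ up to the constraint used only in the summation. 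I would first check whether the stated form suffices via $r\mapsto\max\{r,\rho'^a\}$. That gives
\[
\mu(B(x,r))\leq 4^\lambda\varepsilon\, r^\lambda\rho'^{\,p-\lambda}.
\]
This is at most $4^\lambda\varepsilon r^t$ iff $r^{\lambda-t}\leq\rho^{\beta(\lambda-p)}$. The worst case is $r=\rho^\theta$, where the condition reads $\theta(\lambda-t)\geq\beta(\lambda-p)$, and this holds with equality by the choice of $\beta$.
\end{itemize}

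Hence $\mu(B(x,r))\leq 4^\lambda\varepsilon r^t$ on the whole window, for arbitrarily small $\rho$. Theorem~\ref{thm:windows}(ii) then gives $\dim_{P,\theta}\mu\geq t$. Letting $p\uparrow P$ and $\lambda\downarrow N$ yields \eqref{eq:floor}, by continuity of the expression in $p$ and $\lambda$.

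The main obstacle is the sub-range $[\rho',\rho'^a]$ of scales in the middle step. Lemma~\ref{lem:growth} is stated only for $r\geq\rho'^a$, so it does not directly cover these scales. I would handle this either by using $\mu(B(x,r))\leq\mu(B(x,\rho'^a))$ together with the bound at $\rho'^a=\rho^\theta$, which is exactly the endpoint case already checked, with $r^t\geq\rho'^t$ absorbing the difference after a slight loss in $t$. Alternatively, since $a$ is arbitrary in $(0,1)$, I would use a smaller $a'<a$ so that the whole range $[\rho',\rho^\theta]$ satisfies $r\geq\rho'^{a'}$ fails only where the trivial bound $\mu(B(x,r))\leq\mu(B(x,\rho^\theta))$ combined with a marginally reduced exponent suffices. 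Checking that this loss vanishes in the limit is the point to verify carefully.
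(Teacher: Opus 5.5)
There is a genuine gap, and it comes from setting $a:=\theta/\beta$. With that choice $\rho'^{\,a}=\rho^\theta$. Lemma~\ref{lem:growth} only applies for $\rho'^{\,a}\le r\le 1$, so it controls just the single top scale of the window, and the whole range $[\rho',\rho^\theta)$ is left uncovered. Neither of your remedies closes this.

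Since $1-a=(1-\theta)p/\lambda$, your first remedy gives, for $r\in[\rho',\rho^\theta]$,
\[
\mu(B(x,r))\le\mu(B(x,\rho'^{\,a}))\le 4^\lambda\rho'^{\,-(1-a)\lambda}\,\varepsilon\rho'^{\,p}=4^\lambda\varepsilon\,\rho'^{\,\theta p}.
\]
This is at most $C\varepsilon r^t$ at $r=\rho'$ only if $t\le\theta p$. Since $t=\beta p>\theta p$ for $\theta<1$, the discrepancy is the unbounded factor $\rho'^{\,\theta p-t}$, not a slight loss. Re-optimizing $\beta$ while keeping $a$ tied to $\theta/\beta$ still only reaches a value strictly below $\sigma_\theta(P)$.

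The second remedy goes the wrong way: decreasing $a$ pushes $\rho'^{\,a}$ above $\rho^\theta$, so Lemma~\ref{lem:growth} then covers no scale of the window at all. Finally, the proof of Lemma~\ref{lem:growth} does not yield \eqref{eq:ball_growth} for all $r\ge\rho$. The restriction $r\ge\rho^a$ with $a<1$ is exactly what makes the series over dyadic pairs summable (for $a=1$ each inner sum is bounded below). A bound at all scales would be a doubling-type condition, which fails in general.

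The missing idea is to decouple the growth parameter from the position of the window. Your value $\theta/\beta$ is the paper's $\kappa$, which fixes the top of the window as $\rho'^{\,\kappa}$; the paper keeps a separate $a$ close to $1$. Keep your $\beta$ and $t=\beta p$, but choose $a\in(0,1)$ with $(1-a)\lambda\le p-t$, which is possible since $t<p$. Because $\beta\ge\theta$, this forces $a\ge\theta/\beta$, that is, $\rho'^{\,a}\le\rho^\theta$. Then split the window into three pieces:
\begin{itemize}
\item On $[\rho,\rho']$, use monotonicity as you did.
\item On $[\rho',\rho'^{\,a}]$, use $\mu(B(x,r))\le\mu(B(x,\rho'^{\,a}))\le4^\lambda\varepsilon\rho'^{\,p-(1-a)\lambda}\le4^\lambda\varepsilon\rho'^{\,t}\le4^\lambda\varepsilon r^t$.
\item On $[\rho'^{\,a},\rho^\theta]$, apply \eqref{eq:ball_growth} directly; your endpoint computation at $r=\rho^\theta$ then gives $\mu(B(x,r))\le4^\lambda\varepsilon r^t$.
\end{itemize}
This is the paper's three-range argument, with its $\delta$ playing the role of your $\rho'$. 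The only difference is that the paper takes $t$ strictly below the critical value, so each range carries a factor $\delta^{e_i}$ with $e_i>0$. You instead put $\varepsilon$ into the choice of the packing scale, which works equally well.
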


\begin{proof}
If $P=0$ there is nothing to prove, so assume $P>0$. Let $0<p<P$,
$\lambda>N$ and
\[
0<t<\frac{\theta\lambda p}{\lambda-(1-\theta)p}.
\]
Since $\theta\lambda\leq\lambda-(1-\theta)p$, we have $t<p$. A direct
computation shows that the condition on $t$ is equivalent to
$t(\lambda-p)<\theta p(\lambda-t)$, that is,
\[
\frac{\lambda-p}{\lambda-t}<\frac{\theta p}{t}.
\]
Fix $\kappa$ strictly between these two numbers and fix $a\in(0,1)$ with
$(1-a)\lambda<p-t$. Then the three exponents
\[
e_1=p-\frac{\kappa t}{\theta},\qquad e_2=p-(1-a)\lambda-t,\qquad e_3=\kappa(\lambda-t)-(\lambda-p)
\]
are positive. Since $p<\dim_P\mu$, by \eqref{eq:hu} for $\mu$-almost
every $x$ there are arbitrarily small $\delta$ with
$\mu(B(x,\delta))\leq\delta^p$, and Lemma~\ref{lem:growth} applies with
the parameters $\lambda$ and $a$. Fix such an $x$, let $\varepsilon>0$
and $r_0>0$, and choose $\delta\in(0,\rho_0(x))$, where $\rho_0(x)$ is
given by Lemma~\ref{lem:growth}, with $\mu(B(x,\delta))\leq\delta^p$,
$\delta^{\kappa/\theta}<r_0$ and
$4^\lambda\max\{\delta^{e_1},\delta^{e_2},\delta^{e_3}\}\leq\varepsilon$.
Put $\rho=\delta^{\kappa/\theta}$, so that the window is
$[\rho,\rho^\theta]=[\delta^{\kappa/\theta},\delta^\kappa]$, and let
$r\in[\rho,\rho^\theta]$.

If $r\leq\delta$ then, since $r\geq\delta^{\kappa/\theta}$,
\[
\mu(B(x,r))\leq\mu(B(x,\delta))\leq\delta^p=\delta^{e_1}\delta^{\kappa t/\theta}\leq\delta^{e_1}\,r^t .
\]
If $\delta<r\leq\delta^a$ then, by \eqref{eq:ball_growth} with radius
$\delta^a$,
\[
\mu(B(x,r))\leq\mu(B(x,\delta^a))\leq4^\lambda\delta^{(a-1)\lambda}\delta^p=4^\lambda\delta^{e_2}\delta^{t}\leq4^\lambda\delta^{e_2}\,r^t .
\]
If $\delta^a<r\leq\delta^\kappa$ then $r\leq1$ and, by
\eqref{eq:ball_growth} and $r\leq\delta^\kappa$,
\[
\mu(B(x,r))\leq4^\lambda\Big(\frac r\delta\Big)^\lambda\delta^p=4^\lambda r^t\,r^{\lambda-t}\delta^{p-\lambda}\leq4^\lambda r^t\delta^{\kappa(\lambda-t)+p-\lambda}=4^\lambda\delta^{e_3}\,r^t .
\]
Hence $\mu(B(x,r))\leq\varepsilon r^t$ for all $r\in[\rho,\rho^\theta]$,
and $\rho<r_0$. By Theorem~\ref{thm:windows}(ii), $t\leq\dim_{P,\theta}\mu$.
Letting $t\to\theta\lambda p/(\lambda-(1-\theta)p)$, then $p\to P$ and
$\lambda\to N$ gives the claim; the expression is continuous there
because $N-(1-\theta)P\geq\theta N>0$.
\end{proof}

In particular, if $\dim_P\mu=N_\mu$ then $\sigma_\theta(P)=P$, and
therefore $\dim_{P,\theta}\mu=\dim_P\mu$ for every $\theta\in(0,1]$. Note
also that $\sigma_\theta(P)$ is nonincreasing as a function of $N$.

Since $\theta\mapsto\dim_{P,\theta}\nu$ is nondecreasing, the following
limit exists.

\begin{definition}\label{def:qH}
The \emph{quasi-Hausdorff dimension} of a nonzero finite Borel measure
$\nu$ on $\R^n$ is
$$\dim_{qH}\nu:=\lim_{\theta\to0}\dim_{P,\theta}\nu=\inf_{\theta\in(0,1]}\dim_{P,\theta}\nu .$$
\end{definition}

\begin{remark}\label{rem:qH}
\begin{enumerate}[(a)]
\item By Lemma~\ref{lem:compare},
$\dim_H\nu\leq\dim_{qH}\nu\leq\dim_P\nu$. Thus the packing intermediate
dimensions range from the packing dimension, attained at $\theta=1$, to
the quasi-Hausdorff dimension, which is at least the Hausdorff
dimension. The name is chosen by analogy with the quasi-Assouad
dimension, which is defined as a limit of the Assouad spectrum
\cite{fraser2018new}, and with the quasi-Hausdorff dimensions of sets
introduced in \cite{angelini2025critical}, which are the limits as
$\theta\to0$ of the intermediate dimensions of
\cite{falconer2020intermediate}; the latter coincide with the Hausdorff
dimension at $\theta=0$, but may be discontinuous there.
\item By Lemma~\ref{lem:floor}, $\dim_{qH}\mu=\dim_P\mu$ whenever
$\dim_P\mu=N_\mu$; for instance, if $\dim_P\mu=n$, or if $\supp\mu$ is
compact and $\dim_P\mu=\dim_A(\supp\mu)$. Thus the first inequality in
(a) is strict for every measure with $\dim_H\mu<\dim_P\mu=n$. The second inequality in (a) can also be strict, see
Examples \ref{ex:constant} and ~\ref{ex:FM} below.
\item By Proposition~\ref{prop:lipschitz}, $\dim_{qH}\nu_f\leq\dim_{qH}\nu$
for every Lipschitz map $f:\supp\nu\to\R^m$.
\end{enumerate}
\end{remark}

\section{Applications to packing dimension profiles}
\subsection{Packing dimension profiles and the quasi-Hausdorff dimension}\label{sec:threshold}

We now prove our first main result: the quasi-Hausdorff dimension is the
exact threshold at which the packing dimension profiles attain their
maximal value. The proof combines two elementary comparisons. On the one
hand, the kernel of the $s$-dimensional profile is dominated by the
kernel $\phi^{t,n}_{r,\theta}$ up to an error $r^{t(1-\theta)}$; on the
other hand, a small value of the profile at a scale $r$ forces
Frostman-type estimates along the whole window $[r,r^\theta]$, which by
Theorem~\ref{thm:windows} give lower bounds for $\dim_{P,\theta}\mu$.

\begin{theorem}\label{thm:threshold}
Let $\mu$ be a Borel probability measure on $\R^n$ and let $0<s\leq n$.
Then, for every $\theta\in(0,1]$,
\begin{equation}\label{eq:two_sided}
(1-\theta)\min\{s,\dim_{P,\theta}\mu\}\ \leq\ \dim_P^{s}\mu\ \leq\ (1-\theta)\,s+\theta\min\{s,\dim_{P,\theta}\mu\}.
\end{equation}
Consequently,
\begin{equation}\label{eq:qH_bounds}
\min\{s,\dim_{qH}\mu\}\ \leq\ \dim_P^{s}\mu\ \leq\ \min\{s,\dim_P\mu\},
\end{equation}
and
\begin{equation}\label{eq:threshold}
\dim_P^{s}\mu=s\quad\Longleftrightarrow\quad s\leq\dim_{qH}\mu .
\end{equation}
\end{theorem}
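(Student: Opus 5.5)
The plan is to prove the two inequalities in \eqref{eq:two_sided} separately. The lower one comes from the window characterization, Theorem~\ref{thm:windows}, fed into the layer-cake formula \eqref{eq:layer_profile}. The upper one goes the other way, using \eqref{eq:ball_profile} to turn small profile values into windows. Then \eqref{eq:qH_bounds} and \eqref{eq:threshold} follow by letting $\theta\to0$.

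\emph{Lower bound.} Fix $0<t<\min\{s,\dim_{P,\theta}\mu\}$.
\begin{enumerate}[(1)]
\item By Theorem~\ref{thm:windows}(i), for $\mu$-a.e.\ $x$ and every $\varepsilon>0$ there are arbitrarily small $\rho$ with $\mu(B(x,u))\leq\varepsilon u^t$ for all $u\in[\rho,\rho^\theta]$.
\item Evaluate the profile at the bottom scale $\rho$ of such a window. Split \eqref{eq:layer_profile} at $\rho^\theta$. On $[\rho,\rho^\theta]$ the window bound gives
\[
s\rho^s\int_\rho^{\rho^\theta}\varepsilon u^{t-s-1}\,du\leq\tfrac{s}{s-t}\,\varepsilon\rho^t ,
\]
which uses $t<s$. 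On $[\rho^\theta,\infty)$ the trivial bound $\mu\leq1$ gives at most $\rho^{s-\theta s}=\rho^{(1-\theta)s}$.
\item Multiply by $\rho^{-(1-\theta)t}$. The result is at most $C\varepsilon\rho^{\theta t}+\rho^{(1-\theta)(s-t)}$, which tends to $0$ along the chosen $\rho$.
\item Hence $\liminf_{r\to0}r^{-(1-\theta)t}F^\mu_s(x,r)=0$ $\mu$-a.e., so $\dim_P^s\mu\geq(1-\theta)t$. Letting $t$ increase to the minimum gives the lower bound.
\end{enumerate}

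\emph{Upper bound.} By Remark~\ref{rem:profiles}(c), $\dim_P^s\mu\leq s$. So it suffices to show that every $t'<\dim_P^s\mu$ satisfies $t'\leq(1-\theta)s+\theta\dim_{P,\theta}\mu$; we may assume $t'>(1-\theta)s$, since otherwise this is immediate.
\begin{enumerate}[(1)]
\item By Remark~\ref{rem:profiles}(a), for $\mu$-a.e.\ $x$ and every $\varepsilon$ there are arbitrarily small $r$ with $F^\mu_s(x,r)\leq\varepsilon r^{t'}$.
\item By \eqref{eq:ball_profile}, for $u\in[r,r^\theta]$ we get $\mu(B(x,u))\leq\varepsilon u^s r^{t'-s}$.
\item Put $\tau=s-(s-t')/\theta$. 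Then $u^{s-\tau}r^{t'-s}\leq1$ on $[r,r^\theta]$, because the worst case $u=r^\theta$ gives exactly exponent $0$. Hence $\mu(B(x,u))\leq\varepsilon u^\tau$ on the window $[r,r^\theta]$.
\item This holds at arbitrarily small $r$, for every $\varepsilon$ and a.e.\ $x$. So Theorem~\ref{thm:windows}(ii) (or (iii) with $\varepsilon=1$) yields $\dim_{P,\theta}\mu\geq\tau$, which rearranges to $t'\leq(1-\theta)s+\theta\dim_{P,\theta}\mu$.
\end{enumerate}
The only care needed is that $\tau\geq0$, which holds whenever $t'\geq(1-\theta)s$; below that there is nothing to prove.

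\emph{Consequences.} By monotonicity, $\dim_{P,\theta}\mu\geq\dim_{qH}\mu$. So the lower bound gives $\dim_P^s\mu\geq(1-\theta)\min\{s,\dim_{qH}\mu\}$, and letting $\theta\to0$ gives the left inequality of \eqref{eq:qH_bounds}; the right one is Remark~\ref{rem:profiles}(c). For \eqref{eq:threshold}:
\begin{itemize}
\item If $s\leq\dim_{qH}\mu$, then \eqref{eq:qH_bounds} forces $\dim_P^s\mu=s$.
\item If $\dim_P^s\mu=s$, the upper bound gives $s\leq(1-\theta)s+\theta\min\{s,\dim_{P,\theta}\mu\}$. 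Hence $\dim_{P,\theta}\mu\geq s$ for every $\theta\in(0,1]$, and so $\dim_{qH}\mu\geq s$.
\end{itemize}

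The main obstacle is the upper bound: choosing the window $[r,r^\theta]$ anchored at the profile scale $r$, rather than at $r^{1/\theta}$, so that the polynomial loss $(u/r)^s$ is paid only across the window. That choice is what produces the sharp exponent $\tau$.
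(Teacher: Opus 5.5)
Your proposal is correct and is essentially the paper's proof: the upper bound is identical (the exponent $\tau=s-(s-t')/\theta$, then \eqref{eq:ball_profile} on the window $[r,r^\theta]$, then Theorem~\ref{thm:windows}(ii)). Your lower bound splits \eqref{eq:layer_profile} at $\rho^\theta$ and uses Theorem~\ref{thm:windows}(i); the paper makes the same split at the level of kernels, via $\min\{1,r^s|z|^{-s}\}\leq\phi^{t,n}_{r,\theta}(z)+r^{t(1-\theta)}$. The only slip is that the term $\rho^{(1-\theta)(s-t)}$ in step (3) of your lower bound does not decay when $\theta=1$, but in that case the lower bound reads $0\leq\dim_P^s\mu$ and there is nothing to prove.
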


\begin{proof}
We first prove the lower bound in \eqref{eq:two_sided}. For
$0\leq t\leq s$, $r\in(0,1)$ and $z\in\R^n$ we have
\begin{equation}\label{eq:kernel_comparison}
\min\{1,r^s|z|^{-s}\}\leq\phi^{t,n}_{r,\theta}(z)+r^{t(1-\theta)}.
\end{equation}
Indeed, if $|z|\leq r^\theta$, then
$\min\{1,r^s|z|^{-s}\}\leq\min\{1,r^t|z|^{-t}\}=\phi^{t,n}_{r,\theta}(z)$,
and if $|z|>r^\theta$, then
$\min\{1,r^s|z|^{-s}\}\leq r^{s(1-\theta)}\leq r^{t(1-\theta)}$.
Integrating \eqref{eq:kernel_comparison} with respect to $\mu$,
\begin{equation}\label{eq:comparison_profiles}
F_s^\mu(x,r)\leq F^\mu_{t,n,\theta}(x,r)+r^{t(1-\theta)}\qquad\text{for all }x\in\R^n,\ r\in(0,1).
\end{equation}
Let $0<t<\min\{s,\dim_{P,\theta}\mu\}$. By Remark~\ref{rem:basic}(a), for
$\mu$-a.e.\ $x$ there exists a sequence $r_k\to0$ with
$F^\mu_{t,n,\theta}(x,r_k)\leq r_k^t$, and \eqref{eq:comparison_profiles}
gives $F_s^\mu(x,r_k)\leq2\,r_k^{t(1-\theta)}$. Now observe that $F_s^\mu$
does not depend on $t$; therefore, for every $t'<t(1-\theta)$,
$$r_k^{-t'}F_s^{\mu}(x,r_k)\leq 2\,r_k^{\,t(1-\theta)-t'}
\longrightarrow 0\qquad (k\to\infty),$$
so that $\liminf_{r\to0}r^{-t'}F_s^\mu(x,r)=0$ for $\mu$-a.e.\ $x$, and
$t'\leq\dim_P^s\mu$. Letting $t'\to t(1-\theta)$ and then
$t\to\min\{s,\dim_{P,\theta}\mu\}$ gives the lower bound.

For the upper bound in \eqref{eq:two_sided}, since $\dim_P^s\mu\leq s$ by
Remark~\ref{rem:profiles}(c), it suffices to show that
\begin{equation}\label{eq:pid_from_profile}
\dim_{P,\theta}\mu\ \geq\ s-\frac{s-\dim_P^s\mu}{\theta},
\end{equation}
which is equivalent to $\dim_P^s\mu\leq(1-\theta)s+\theta\dim_{P,\theta}\mu$.
Let $0\leq t<\dim_P^s\mu$ and set $t':=s-(s-t)/\theta\leq t$; we may
assume that $t'>0$. By Remark~\ref{rem:profiles}(a), for $\mu$-a.e.\ $x$
and every $\varepsilon>0$ there are arbitrarily small $r\in(0,1)$ with
$F_s^\mu(x,r)\leq\varepsilon r^t$. For such $r$ and every $u\in[r,r^\theta]$,
\eqref{eq:ball_profile} gives
$$\mu(B(x,u))\leq\Bigl(\frac ur\Bigr)^{s}F_s^\mu(x,r)\leq\varepsilon\,u^{s}r^{t-s}
=\varepsilon\,u^{t'}\,u^{s-t'}r^{t-s}\leq\varepsilon\,u^{t'}r^{\theta(s-t')+t-s}=\varepsilon\,u^{t'},$$
where we used $u\leq r^\theta$, $s-t'\geq0$ and $\theta(s-t')=s-t$. Thus
$t'$ has the property in Theorem~\ref{thm:windows}(ii), and therefore
$t'\leq\dim_{P,\theta}\mu$. Letting $t\to\dim_P^s\mu$ gives
\eqref{eq:pid_from_profile}.

As $\theta\downarrow0$, $\dim_{P,\theta}\mu$ decreases to
$\dim_{qH}\mu$, so the lower bound in \eqref{eq:two_sided} gives
$\dim_P^s\mu\geq\min\{s,\dim_{qH}\mu\}$; the upper bound in
\eqref{eq:qH_bounds} is Remark~\ref{rem:profiles}(c). Finally, if
$s\leq\dim_{qH}\mu$, then \eqref{eq:qH_bounds} gives $\dim_P^s\mu=s$;
and if $s>\dim_{qH}\mu$, we may choose $\theta\in(0,1]$ with
$\dim_{P,\theta}\mu<s$, and then \eqref{eq:two_sided} gives
$\dim_P^s\mu\leq(1-\theta)s+\theta\dim_{P,\theta}\mu<s$.
\end{proof}

\begin{remark}\label{rem:threshold}
By \eqref{eq:threshold}, the quasi-Hausdorff dimension is determined by
the profiles:
$$\dim_{qH}\mu=\sup\{s\in(0,n]\,:\,\dim_P^s\mu=s\},$$
with the convention $\sup\emptyset=0$. In view of
\eqref{eq:intro_xiao}, it can therefore be read off from the almost sure
packing dimensions of the images of $\mu$ under fractional Brownian
motions, see Corollary~\ref{cor:images_threshold}.
\end{remark}

In the extremal case $\dim_{qH}\mu=\dim_P\mu$ the profiles admit a closed
form, which is the packing analogue, at the level of profiles, of the
formulas \eqref{eq:intro_HT} and \eqref{eq:kahane}; moreover, this is the
only case in which such a closed form holds.

\begin{corollary}\label{cor:kahane_profiles}
Let $\mu$ be a Borel probability measure on $\R^n$. The following are
equivalent:
\begin{enumerate}[(i)]
\item $\dim_{qH}\mu=\dim_P\mu$;
\item $\dim_P^s\mu=\min\{s,\dim_P\mu\}$ for every $s>0$;
\item $\dim_P\mu=0$, or $\dim_P^{s}\mu=s$ for $s=\dim_P\mu$.
\end{enumerate}
In particular, these conditions hold whenever $\dim_P\mu=N_\mu$, for
instance if $\supp\mu$ is compact and $\dim_P\mu=\dim_A(\supp\mu)$.
\end{corollary}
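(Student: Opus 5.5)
We prove the implications (i)$\Rightarrow$(ii)$\Rightarrow$(iii)$\Rightarrow$(i) using only Theorem~\ref{thm:threshold} and Remark~\ref{rem:profiles}. Write $P=\dim_P\mu$.

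\emph{(i)$\Rightarrow$(ii).} Assume $\dim_{qH}\mu=P$. For $0<s\leq n$, \eqref{eq:qH_bounds} gives
$$\min\{s,P\}\leq\dim_P^s\mu\leq\min\{s,P\},$$
so $\dim_P^s\mu=\min\{s,P\}$. For $s>n$, Remark~\ref{rem:profiles}(c) gives $\dim_P^s\mu=P$. This equals $\min\{s,P\}$ because $P\leq n<s$ by \eqref{eq:chain}.

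\emph{(ii)$\Rightarrow$(iii).} Suppose $P>0$ and take $s=P$. Then (ii) yields $\dim_P^{P}\mu=\min\{P,P\}=P=s$.

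\emph{(iii)$\Rightarrow$(i).} If $P=0$, then $0\leq\dim_{qH}\mu\leq P=0$ by Remark~\ref{rem:qH}(a), so (i) holds. Otherwise $s=P$ lies in $(0,n]$ by \eqref{eq:chain}, so \eqref{eq:threshold} applies. The hypothesis $\dim_P^s\mu=s$ then gives $P=s\leq\dim_{qH}\mu$. Combined with $\dim_{qH}\mu\leq P$, this is equality.

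\emph{Final statement.} If $P=N_\mu$, Lemma~\ref{lem:floor} gives $\dim_{P,\theta}\mu\geq\sigma_\theta(P)=P$ for every $\theta$. Letting $\theta\to0$ gives $\dim_{qH}\mu\geq P$, hence (i) (cf. Remark~\ref{rem:qH}(b)). The compact-support case is the special case $N_\mu=\dim_A(\supp\mu)$.

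The only point requiring care is checking that $s=P$ is admissible in Theorem~\ref{thm:threshold}, which needs $0<s\leq n$. Positivity is exactly why the case $P=0$ is split off in (iii). The bound $s\leq n$ follows from \eqref{eq:chain}.
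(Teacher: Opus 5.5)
Your proposal is correct and follows the paper's proof essentially step for step: \eqref{eq:qH_bounds} for $0<s\leq n$ together with Remark~\ref{rem:profiles}(c) for $s>n$, the trivial step (ii)$\Rightarrow$(iii), and \eqref{eq:threshold} at $s=\dim_P\mu\in(0,n]$ with $\dim_P\mu=0$ handled separately. The final assertion is likewise obtained from Lemma~\ref{lem:floor}, since $\sigma_\theta(P)=P$ when $P=N_\mu$.
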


\begin{proof}
If (i) holds, then \eqref{eq:qH_bounds} gives (ii) for $0<s\leq n$, and
for $s\geq n$ we have $\dim_P^s\mu=\dim_P\mu$ by
Remark~\ref{rem:profiles}(c). Clearly (ii) implies (iii). If (iii) holds
and $\dim_P\mu>0$, then \eqref{eq:threshold} with $s=\dim_P\mu\in(0,n]$
gives $\dim_P\mu\leq\dim_{qH}\mu\leq\dim_P\mu$; and if $\dim_P\mu=0$ then
(i) holds trivially, since $0\leq\dim_{qH}\mu\leq\dim_P\mu$. The last
assertion follows from Lemma~\ref{lem:floor}, see
Remark~\ref{rem:qH}(b).
\end{proof}

\subsection{Lower bounds for the packing dimension profiles}\label{sec:lower}

In this section we obtain lower bounds for the profiles $\dim_P^s\mu$ in
the regime $s>\dim_{qH}\mu$, where by Theorem~\ref{thm:threshold} they
are strictly smaller than $s$. Throughout the section, $\mu$ is a Borel
probability measure on $\R^n$, and we write
$$d_H=\dim_H\mu,\qquad P=\dim_P\mu,\qquad P_\theta=\dim_{P,\theta}\mu,\qquad N=N_\mu .$$
By \eqref{eq:chain} and Lemma~\ref{lem:compare},
$d_H\leq P_\theta\leq P\leq N\leq n$ for every $\theta\in(0,1]$.

\subsubsection{A block decomposition}

Let $0<s\leq n$. For $0\leq d<s$, $d<t<\lambda$ and $\theta\in(0,1]$ we
define the \emph{penalty}
\begin{equation}\label{eq:E}
E=E(d,t,\lambda)=\frac{(s-d)(\lambda-t)}{\lambda-d}>0,
\end{equation}
which satisfies the identity
\begin{equation}\label{eq:Eid}
E+t-s=\frac{(t-d)(\lambda-s)}{\lambda-d},
\end{equation}
and we put
\begin{equation}\label{eq:w}
w=\max\Big\{1,\ \frac{t+\theta E}{s}\Big\},\qquad \beta_\theta=\beta_\theta(d,t,\lambda)=\frac tw .
\end{equation}
Since $st/(t+\theta E)=st(\lambda-d)/\big(t(\lambda-d)+\theta(s-d)(\lambda-t)\big)$,
\begin{equation}\label{eq:beta}
\beta_\theta(d,t,\lambda)=\min\Big\{t,\ \frac{s\,t\,(\lambda-d)}{t(\lambda-d)+\theta(s-d)(\lambda-t)}\Big\}.
\end{equation}
If $\lambda\leq s$, then \eqref{eq:Eid} gives $t+\theta E\leq t+E\leq s$,
so that $w=1$ and $\beta_\theta=t$.\\

The next proposition improves \cite[Proposition 2.6 (b)]{falconer1996packing}.

\begin{proposition}\label{prop:blocks}
Let $0<s\leq n$. Let $d$ satisfy either $0<d<d_H$ or $d=d_H=0$, and
assume $d<s$. Let $\theta\in(0,1]$, let $t$ satisfy $d<t<P_\theta$ and
$t\neq s$, and let $\lambda>N$ with $\lambda\neq s$. Then for $\mu$-almost
every $x$ there is $c=c(x)>0$ such that
\[
F_s^\mu(x,h)\ \leq\ c\,h^{\beta_\theta(d,t,\lambda)}
\]
for arbitrarily small values of $h>0$.
\end{proposition}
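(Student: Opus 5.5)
The plan is to estimate $F_s^\mu(x,h)$ through the layer-cake formula \eqref{eq:layer_profile},
$$F_s^\mu(x,h)=s\,h^s\int_h^\infty u^{-s-1}\mu\bigl(B(x,u)\bigr)\,du,$$
at a scale $h$ that is coupled to a window provided by the packing intermediate dimension. The input at a typical point $x$ consists of three ingredients, each holding on a set of full measure.

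\begin{enumerate}[(1)]
\item \emph{Frostman bound.} By Lemma~\ref{lem:frostman}, $\mu(B(x,u))\leq c_1u^d$ for all $u$; in the case $d=0$ this holds with $c_1=1$.
\item \emph{Window bound.} By Theorem~\ref{thm:windows}(i), applied with $t<P_\theta$ and $\varepsilon=1$, there are arbitrarily small $\rho$ with $\mu(B(x,r))\leq r^t$ for all $r\in[\rho,\rho^\theta]$.
\item \emph{Growth bound.} By Lemma~\ref{lem:growth}, with $\lambda>N$ and some $a\in(0,1)$ close to $1$, we have $\mu(B(x,u))\leq(4u/\rho^\theta)^\lambda\mu(B(x,\rho^\theta))\leq 4^\lambda u^\lambda\rho^{\theta(t-\lambda)}$ for $\rho^{a\theta}\leq u\leq1$. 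By the uniformity in Remark~\ref{rem:uniform}, this holds at every small $\rho$, in particular at the window scales.
\end{enumerate}

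On $[\rho^\theta,\rho^{a\theta}]$ we use monotonicity, exactly as in the treatment of $J_2$ in Theorem~\ref{thm:windows}(ii); the loss there is a factor $\rho^{-(1-a)\theta\lambda}$, which can be absorbed by letting $a\to1$. For $u\geq1$ we use the trivial bound $\mu\leq1$.

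Given a window scale $\rho$, set $h=\rho^{\kappa}$ with an exponent $\kappa\geq1$ to be optimized, so that $h\leq\rho$. Split the integral into $[h,\rho]$, $[\rho,\rho^\theta]$, $[\rho^\theta,1]$ and $[1,\infty)$.

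\begin{enumerate}[(a)]
\item On $[h,\rho]$, use $\mu(B(x,u))\leq\min\{c_1u^d,\rho^t\}$. Since $d<s$, this piece is $\lesssim h^s\rho^{t-s}$ when the crossover scale $\rho^{t/d}$ lies below $h$. Otherwise it is $\lesssim h^{d}$, up to the constant $c_1$.
\item On $[\rho,\rho^\theta]$, use $u^t$. This gives $h^s\rho^{t-s}$ if $t<s$, and $h^s\rho^{\theta(t-s)}$ if $t>s$; the hypothesis $t\neq s$ is used exactly here to avoid logarithms.
\item On $[\rho^\theta,1]$, use the growth bound. This gives $h^s\rho^{\theta(t-\lambda)}$ if $\lambda>s$, and $h^s\rho^{\theta(t-s)}$ if $\lambda<s$; again $\lambda\neq s$ prevents a logarithm.
\item The piece over $[1,\infty)$ is $\leq h^s$.
\end{enumerate}

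When $\lambda\leq s$ all terms are $\lesssim h^{t}$ with $\kappa=1$, giving $\beta_\theta=t$, consistent with $w=1$ in that case.

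In the main case $\lambda>s$, the bad term is $h^s\rho^{\theta(t-\lambda)}$, and it must be balanced against the Frostman-type constraint from small scales. The step I expect to be the main obstacle is this balancing. The Frostman bound alone only gives $h^d$, so the scales below $\rho$ must be handled more cleverly. I would try to interpolate between $u^d$ and the growth exponent $\lambda$: roughly, an estimate $\mu(B(x,\rho^\theta))\leq\rho^{\theta t}$ together with $\mu(B(x,u))\leq c_1u^d$ is compatible with growth of rate $\lambda$ only on a controlled range. Optimizing the crossover produces the weighted average $(s-d)/(\lambda-d)$ and hence the penalty $E$ of \eqref{eq:E}.

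Concretely, I would choose $\kappa$ so that $h^s\rho^{\theta(t-\lambda)}$, $h^s\rho^{t-s}$ and the small-scale term all equal $h^{t/w}$. The identity \eqref{eq:Eid}, $E+t-s=(t-d)(\lambda-s)/(\lambda-d)$, should be exactly what makes these exponents match, yielding $\beta_\theta=t/w$ with $w=\max\{1,(t+\theta E)/s\}$. If the direct three-way balance does not close, the fallback is to follow the proof of \cite[Proposition 2.6(b)]{falconer1996packing}, which is the case $\theta=1$, $\lambda=n(1+\varepsilon)$. In that fallback I would replace their single scale by the window $[\rho,\rho^\theta]$ and their use of Lebesgue growth by Lemma~\ref{lem:growth} with $\lambda>N$, and check that the exponent bookkeeping carries over, with $\theta$ multiplying only the penalty term.

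Finally, since the window scales $\rho$ are arbitrarily small, so are the scales $h=\rho^\kappa$. Collecting the constants ($c_1$, $4^\lambda$, and the factors coming from the integrals) into $c(x)$ gives $F_s^\mu(x,h)\leq c\,h^{\beta_\theta}$ along a sequence $h\to0$. Since the ingredients (1)--(3) each hold for $\mu$-a.e.\ $x$, this holds for $\mu$-almost every $x$.
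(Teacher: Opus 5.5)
There is a genuine gap in the main case $\lambda>s$: you put the Frostman bound at the wrong end of the scale range.

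Below $\rho$ nothing needs to be done. On $[h,\rho]$ monotonicity gives $\mu(B(x,u))\le\rho^t$. The lower end of $\int_h^\rho u^{-s-1}\,du$ dominates, so the block is at most $\rho^t/s$, not $h^s\rho^{t-s}$ as you wrote. That is already the target $h^{\beta_\theta}=\rho^t$ once $h=\rho^w$.

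The real loss is in your block (c). The growth bound $4^\lambda u^\lambda\rho^{\theta(t-\lambda)}$ is only useful up to the crossover scale $R=\rho^{\theta(\lambda-t)/(\lambda-d)}$. Beyond $R$ it exceeds $c_1u^d$, and for $d=0$ it even exceeds the trivial bound $1$. Using it on all of $[\rho^{a\theta},1]$ gives $h^s\rho^{\theta(t-\lambda)}$. Balancing this against $\rho^t$ forces $h^s=\rho^{t+\theta(\lambda-t)}$, that is, the exponent $\min\{t,\,st/(t+\theta(\lambda-t))\}$.

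Since $E=(s-d)(\lambda-t)/(\lambda-d)<\lambda-t$ when $s<\lambda$, this is strictly smaller than $\beta_\theta(d,t,\lambda)$ whenever $t+\theta(\lambda-t)>s$. For $\theta=1$ it gives only $st/\lambda$, so it does not even recover the Falconer--Mattila exponent. No choice of $\kappa$ and no refinement below $\rho$ can repair this. Your three-way balance contains no large-scale Frostman term, so it cannot produce $E$.

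The missing step is to split at $R$. Use your growth bound (3) on $[\rho^{a\theta},R]$; this needs $\rho^{a\theta}\le R$, i.e.\ $a\ge(\lambda-t)/(\lambda-d)$. Use the Frostman bound on $[R,\infty)$, where $d<s$ makes the integral converge with the lower end dominating. Both blocks are then bounded by a constant times
\[
h^s\rho^{\theta(t-\lambda)}R^{\lambda-s}=h^sR^{d-s}=h^s\rho^{-\theta E}.
\]
Take $h^s=\rho^{t+\theta E}$ when $t+\theta E\ge s$, and $h=\rho$ otherwise. This makes every block $\lesssim\rho^t=h^{t/w}$.

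The identity \eqref{eq:Eid} is not what balances these two terms. It is what controls the window block when $t>s$, via $h^s\rho^{\theta(t-s)}=\rho^{t+\theta(E+t-s)}\le\rho^t$.

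The junction block $[\rho^\theta,\rho^{a\theta}]$ costs $\rho^{-\theta(1-a)\lambda}$, so you obtain $h^{(t-\eta)/w}$ with $\eta=\theta(1-a)\lambda$. To reach exactly $\beta_\theta(d,t,\lambda)$ you must run the argument with some $t'\in(t,P_\theta)$ and use strict monotonicity of $t\mapsto\beta_\theta(d,t,\lambda)$; ``letting $a\to1$'' alone does not do it.

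Your fallback of following Falconer--Mattila would lead to this decomposition, since their partition contains the same crossover scale. As written, however, it is a deferral rather than an argument.
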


Note that $\lambda>N\geq P\geq P_\theta>t>d$. Numbers $t$ as in the
statement exist whenever $P_\theta>d$, and this is the case if $d<d_H$,
by Lemma~\ref{lem:compare}.

\begin{proof}
By \eqref{eq:layer_profile}, $F_s^\mu(x,h)=s\,J_x(h)$, where
$J_x(h):=h^s\int_h^\infty u^{-s-1}\mu(B(x,u))\,du$, so we estimate
$J_x(h)$.

For fixed $d$ and $\lambda$ the function
$t\mapsto\beta_\theta(d,t,\lambda)$ is continuous and strictly increasing
on $(d,\lambda)$. Indeed, the second term in \eqref{eq:beta} equals
\[
\frac{s}{1+\theta\,\dfrac{s-d}{\lambda-d}\Big(\dfrac{\lambda}{t}-1\Big)} ,
\]
and the minimum of two continuous strictly increasing functions is
continuous and strictly increasing. Hence there is $t'\in(t,P_\theta)$
with $t'\neq s$ and $\beta_\theta(d,t',\lambda)>\beta_\theta(d,t,\lambda)$.
Let $w'$ be the number \eqref{eq:w} computed with $t'$, so that
$t'/w'=\beta_\theta(d,t',\lambda)$. Fix $a\in(0,1)$ with
$a\geq(\lambda-t')/(\lambda-d)$ and so close to $1$ that
$\eta=\theta(1-a)\lambda$ satisfies $(t'-\eta)/w'\geq\beta_\theta(d,t,\lambda)$.
Since $h^{e'}\leq h^e$ for $h\in(0,1)$ and $e'\geq e$, it suffices to prove
the following statement. \emph{For every admissible $t$ and every
$a\in(0,1)$ with $a\geq(\lambda-t)/(\lambda-d)$, for $\mu$-almost every
$x$ there is $c=c(x)$ such that}
\begin{equation}\label{eq:goal}
J_x(h)\leq c\,h^{(t-\eta)/w},\qquad \eta=\theta(1-a)\lambda,
\end{equation}
\emph{for arbitrarily small values of $h$.}

Now we will estimate the measure on balls centered in typical points.

By Lemma~\ref{lem:frostman},
Theorem~\ref{thm:windows}(i) and Lemma~\ref{lem:growth}, for $\mu$-almost
every $x$ the following hold simultaneously.
\begin{enumerate}
\item[(F)] $\mu(B(x,r))\leq c_1r^d$ for all $r>0$, with $c_1=c_1(x)$.
\item[(V)] There are arbitrarily small $\rho\in(0,1)$ with $\mu(B(x,r))\leq r^t$ for all $r\in[\rho,\rho^\theta]$.
\item[(D)] $\mu(B(x,r))\leq(4r/\varrho)^\lambda\mu(B(x,\varrho))$ for $0<\varrho<\rho_0(x)$ and $\varrho^a\leq r\leq1$.
\end{enumerate}
Fix such an $x$ and a number $\rho$ as in (V) with $\rho^\theta<\rho_0(x)$.
Applying (D) with $\varrho=\rho^\theta$, the top of the window, and using
$\mu(B(x,\rho^\theta))\leq\rho^{\theta t}$, which is (V) at
$r=\rho^\theta$, we obtain
\begin{enumerate}
\item[(L)] $\mu(B(x,r))\leq4^\lambda\big(r/\rho^\theta\big)^\lambda\rho^{\theta t}$ for all $r\in[\rho^{a\theta},1]$.
\end{enumerate}
The only selective condition is (V). Indeed, (F) holds at every scale and
(D) holds for every $\varrho<\rho_0(x)$ (Remark~\ref{rem:uniform}), so
(F), (V) and (L) hold simultaneously along a family of arbitrarily small
scales $\rho$, without matching two sequences of scales. In particular,
\begin{equation}\label{eq:topblock}
\mu(B(x,r))\leq\mu(B(x,\rho^{a\theta}))\leq4^\lambda\rho^{-\theta(1-a)\lambda}\rho^{\theta t}=4^\lambda\rho^{\theta t-\eta}\qquad\text{for } r\leq\rho^{a\theta}.
\end{equation}

First, suppose that $\lambda>s$. Put
\[
h=\rho^{w},\qquad R=\rho^{\theta(\lambda-t)/(\lambda-d)} .
\]
We distinguish two regimes. In \emph{regime II}, $t+\theta E\geq s$, we
have $h^s=\rho^{t+\theta E}$. In \emph{regime I}, $t+\theta E<s$, we have
$h=\rho$. Regime I can only occur if $t<s$ and $\theta<1$; indeed, $t>s$
implies $t+\theta E>s$, and for $\theta=1$ the identity \eqref{eq:Eid}
gives $t+E-s>0$ because $\lambda>s$ and $t>d$. The cut points satisfy
\[
h\ \leq\ \rho\ \leq\ \rho^\theta\ \leq\ \rho^{a\theta}\ \leq\ R\ \leq\ 1 .
\]
Here $h\leq\rho$ because $w\geq1$, $\rho^\theta\leq\rho^{a\theta}$
because $a<1$, and $\rho^{a\theta}\leq R$ is equivalent to
$a\geq(\lambda-t)/(\lambda-d)$, the same condition on $a$ as in
\cite[(2.4)]{falconer1996packing}. We shall use the inequality
\begin{equation}\label{eq:key}
h^s\rho^{\theta(t-s)}\leq\rho^t .
\end{equation}
If $t<s$ it follows from $h\leq\rho$, since
$\rho^{s+\theta(t-s)}=\rho^{t+(1-\theta)(s-t)}\leq\rho^t$. If $t>s$ we are
in regime II and $h^s\rho^{\theta(t-s)}=\rho^{t+\theta(E+t-s)}\leq\rho^t$
by \eqref{eq:Eid}. Each contribution below is of the form
$h^s\int_Iu^{-s-1}\mu(B(x,u))\,du$ over the indicated interval $I$.

We will break the integral over $5$ different blocks as follows.

\emph{Block 1, $[h,\rho]$, monotonicity from the foot of the window
(regime II only).} By (V) at $r=\rho$,
$\mu(B(x,u))\leq\mu(B(x,\rho))\leq\rho^t$ for $u\leq\rho$, and the lower
end dominates:
\[
I_1=h^s\int_h^\rho u^{-s-1}\mu(B(x,u))\,du\leq\rho^t\,h^s\int_h^\infty u^{-s-1}\,du=\frac1s\,\rho^t .
\]
In regime I we have $h=\rho$ and this block is absent.

\emph{Block 2, $[\rho,\rho^\theta]$, the window.} By (V) and $t\neq s$,
\[
I_2\leq h^s\int_\rho^{\rho^\theta}u^{t-s-1}\,du\leq\frac{h^s}{|t-s|}\max\{\rho^{t-s},\rho^{\theta(t-s)}\}\leq\frac{\rho^t}{|t-s|}.
\]
If $t<s$ the maximum is $\rho^{t-s}$ and we use $h\leq\rho$; if $t>s$ the
maximum is $\rho^{\theta(t-s)}$ and we use \eqref{eq:key}.

\emph{Block 3, $[\rho^\theta,\rho^{a\theta}]$, junction.} By
\eqref{eq:topblock} and \eqref{eq:key},
\[
I_3\leq4^\lambda\rho^{\theta t-\eta}\,h^s\int_{\rho^\theta}^\infty u^{-s-1}\,du=\frac{4^\lambda}{s}\,\rho^{-\eta}\,h^s\rho^{\theta(t-s)}\leq\frac{4^\lambda}{s}\,\rho^{t-\eta}.
\]
The loss $\rho^{-\eta}$ disappears as $a\to1$.

\emph{Block 4, $[\rho^{a\theta},R]$, the quasi-regular range.} By (L) and
$\lambda>s$ the upper end dominates:
\[
I_4\leq4^\lambda\rho^{\theta(t-\lambda)}\,h^s\int_0^Ru^{\lambda-s-1}\,du=\frac{4^\lambda}{\lambda-s}\,h^s\rho^{\theta(t-\lambda)}R^{\lambda-s}=\frac{4^\lambda}{\lambda-s}\,h^s\rho^{-\theta E},
\]
because $\theta(t-\lambda)+\theta(\lambda-t)(\lambda-s)/(\lambda-d)=-\theta(\lambda-t)(s-d)/(\lambda-d)=-\theta E$.

\emph{Block 5, $[R,\infty)$, the Frostman range.} By (F) and $d<s$ the
integral converges, and
\[
I_5\leq c_1h^s\int_R^\infty u^{d-s-1}\,du=\frac{c_1}{s-d}\,h^sR^{d-s}=\frac{c_1}{s-d}\,h^s\rho^{-\theta E}.
\]

Now, the choice of $R$ equalizes Blocks 4 and 5; it is the
solution of $\rho^{\theta(t-\lambda)}R^{\lambda-s}=R^{d-s}$. The choice of
$h$ equalizes their common value $h^s\rho^{-\theta E}$ with $\rho^t$. In
regime II, $h^s\rho^{-\theta E}=\rho^{t+\theta E-\theta E}=\rho^t$. In
regime I, $h=\rho$ and $h^s\rho^{-\theta E}=\rho^{s-\theta E}\leq\rho^t$,
because $s-\theta E>t$. Therefore, in both regimes,
\[
J_x(h)\leq I_1+\dots+I_5\leq c\,\rho^{t-\eta}=c\,h^{(t-\eta)/w},\qquad
c=\frac1s+\frac1{|t-s|}+\frac{4^\lambda}s+\frac{4^\lambda}{\lambda-s}+\frac{c_1}{s-d},
\]
since $\rho=h^{1/w}$. The constant depends on $x$ only through $c_1$ and the result follows.

Now, suppose that $\lambda<s$. 

Then
$t<P_\theta\leq N<\lambda<s$, $w=1$, and we put $h=\rho$.

\emph{Block 2, $[\rho,\rho^\theta]$.} As in the case $s<\lambda$, $I_2\leq\rho^t/(s-t)$.

\emph{Block 3, $[\rho^\theta,\rho^{a\theta}]$.} By \eqref{eq:topblock},
$I_3\leq(4^\lambda/s)\rho^{-\eta}\rho^{s+\theta(t-s)}\leq(4^\lambda/s)\rho^{t-\eta}$.

\emph{Block 4, $[\rho^{a\theta},1]$.} Now $\lambda<s$, so the lower end
dominates. By (L),
\[
I_4\leq4^\lambda\rho^{\theta(t-\lambda)}\rho^s\int_{\rho^{a\theta}}^\infty u^{\lambda-s-1}\,du=\frac{4^\lambda}{s-\lambda}\,\rho^{s+\theta(t-\lambda)+a\theta(\lambda-s)}\leq\frac{4^\lambda}{s-\lambda}\,\rho^{t},
\]
because $s+\theta(t-\lambda)+a\theta(\lambda-s)=t+(1-\theta)(s-t)+\theta(1-a)(s-\lambda)\geq t$.

\emph{Block 5, $[1,\infty)$.} Since $\mu(B(x,r))\leq1$,
$I_5\leq\rho^s\int_1^\infty u^{-s-1}\,du=\rho^s/s\leq\rho^t/s$.

Therefore $J_x(h)\leq c\,\rho^{t-\eta}=c\,h^{t-\eta}$ with
$c=1/(s-t)+4^\lambda/s+4^\lambda/(s-\lambda)+1/s$. Here neither (F) nor
the number $d$ is used.

\medskip
In both cases $\rho$ can be taken arbitrarily small, hence so can
$h=\rho^w$, and \eqref{eq:goal} is proved.

 \emph{The case $t=s$.} It is excluded only to avoid a logarithm in
Block 2. Since $t\mapsto\beta_\theta(d,t,\lambda)$ is continuous, this
exclusion costs nothing.
\end{proof}

\begin{remark}\label{rem:blocks}
\begin{enumerate}[(a)]
\item \emph{The case $\theta=1$.} Then $\rho^\theta=\rho$, Block 2 is
empty, regime I does not occur, $h=\rho^{(t+E)/s}$ and
$R=\rho^{(\lambda-t)/(\lambda-d)}$. With $\lambda$ in the role of
$n(1+\varepsilon)$, these are the partition $[h,\rho]$, $[\rho,\rho^a]$,
$[\rho^a,R]$, $[R,\infty)$, the scales and the exponent $st/(t+E)$ of the
proof of \cite[Proposition 2.6(a)]{falconer1996packing}. Letting
$\lambda\downarrow n$ recovers \cite[Proposition 2.6(a)]{falconer1996packing},
whose exponent is $st/\big(t+(s-d)(n-t)/(n-d)\big)$ (for an integer
$s=m$).
\item Compared with
\cite{falconer1996packing}, with $N$ in place of $n$, the only changes in
the exponent are the factor $\theta$ in front of the penalty $E$ and the
cap $t$. Above the packing scale $\rho$ the only information available in
\cite{falconer1996packing} is the quasi-Lebesgue bound (D), valid from
$\rho^a$ on, which grows fast. The window postpones the quasi-regular
regime from $\rho$ to $\rho^\theta$, since between $\rho$ and
$\rho^\theta$ the measure grows at most like $r^t$. This is why the
relevant quantity is the measure at the foot of the window, and why the
estimates are parametrized by $\rho$ and not by $\rho^\theta$.
\end{enumerate}
\end{remark}

\subsubsection{The lower bound}

For $\lambda>d$, $\theta\in(0,1]$ and $0<t\leq\lambda$ we write
\begin{equation}\label{eq:G}
G_\lambda(\theta,t)=\frac{s\,t\,(\lambda-d)}{t(\lambda-d)+\theta(s-d)(\lambda-t)},
\end{equation}
where $s$ and $d$ will always be clear from the context, so that
$\beta_\theta(d,t,\lambda)=\min\{t,G_\lambda(\theta,t)\}$. From now on we
take $d=d_H$. For $d_H<s\leq n$ we also write
\begin{equation}\label{eq:alphaFM}
\alpha_{FM}(s)=\frac{P\,(1-d_H/n)}{1+(1/s-1/n)P-d_H/s};
\end{equation}
for an integer $s=m$ this is the lower bound \eqref{eq:intro_FM} of
\cite[Theorem 3.3]{falconer1996packing} for $\dim_P\mu_V$.

\begin{lemma}\label{lem:algebra}
Let $d_H<s\leq n$, $\lambda>d_H$, $\theta\in(0,1]$ and $0<t\leq\lambda$.
\begin{enumerate}[(i)]
\item $\displaystyle\frac{1}{G_\lambda(\theta,t)}=\frac1s+\frac{\theta(s-d_H)}{s(\lambda-d_H)}\Big(\frac\lambda t-1\Big)$. In particular $G_\lambda(\theta,t)$ is increasing in $t$ and nonincreasing in $\theta$.
\item $G_\lambda(1,t)\leq t$ if and only if $(\lambda-s)(t-d_H)\geq0$,
and the same holds with strict inequalities. If $\lambda\leq s$ and
$t\geq d_H$, then $G_\lambda(\theta,t)\geq t$ for every $\theta$.
\item If $P\leq\lambda$, then $G_\lambda(1,P)=\dfrac{P\,(1-d_H/\lambda)}{1+(1/s-1/\lambda)P-d_H/s}$; in particular $G_n(1,P)=\alpha_{FM}(s)$.
\item The function $\lambda\mapsto G_\lambda(1,P)$ is nonincreasing on
$[P,\infty)$, and strictly decreasing if $d_H<P$.
\item If $\lambda\geq\max\{s,P\}$, then $d\mapsto G_\lambda(1,P)$,
computed with $d$ in place of $d_H$, is nondecreasing on $[0,s)$.
\end{enumerate}
\end{lemma}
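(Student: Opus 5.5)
The plan is a direct computation for each item, starting from the reciprocal form in (i), from which the rest follows.

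For (i), write $d=d_H$ and divide the denominator of \eqref{eq:G} by $st(\lambda-d)$:
\[
\frac{1}{G_\lambda(\theta,t)}=\frac1s+\frac{\theta(s-d)(\lambda-t)}{st(\lambda-d)} .
\]
Since $(\lambda-t)/t=\lambda/t-1$, this is the stated identity. Because $s>d$ and $\lambda>d$, the coefficient $\theta(s-d)/(s(\lambda-d))$ is positive. The factor $\lambda/t-1$ is decreasing in $t$, so $1/G$ decreases in $t$ and $G$ increases in $t$. Since $\lambda/t-1\geq0$ for $t\leq\lambda$, the expression $1/G$ is nondecreasing in $\theta$, so $G$ is nonincreasing in $\theta$.

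For (ii), the denominator is positive, so $G_\lambda(1,t)\leq t$ is equivalent to
\[
s(\lambda-d)\leq t(\lambda-d)+(s-d)(\lambda-t).
\]
Expanding, the difference (right side minus left side) equals $(\lambda-s)(t-d)$. So the inequality holds exactly when $(\lambda-s)(t-d)\geq0$, and the same computation gives the strict version. If $\lambda\leq s$ and $t\geq d$, then the strict/non-strict version gives $G_\lambda(1,t)\geq t$. Monotonicity in $\theta$ from (i) then gives $G_\lambda(\theta,t)\geq G_\lambda(1,t)\geq t$ for every $\theta\in(0,1]$.

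For (iii), put $t=P$, $\theta=1$ and divide numerator and denominator by $s\lambda$. The numerator becomes $P(1-d/\lambda)$. For the denominator, note that
\[
P(\lambda-d)+(s-d)(\lambda-P)=s\lambda-sP+P\lambda-d\lambda,
\]
which divided by $s\lambda$ gives $1+(1/s-1/\lambda)P-d/s$. Setting $\lambda=n$ yields $\alpha_{FM}(s)$ as in \eqref{eq:alphaFM}.

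For (iv), use the reciprocal formula with $t=P$:
\[
\frac{1}{G_\lambda(1,P)}=\frac1s+\frac{s-d}{s}\cdot\frac{\lambda-P}{P(\lambda-d)} .
\]
The factor $(\lambda-P)/(\lambda-d)=1-(P-d)/(\lambda-d)$ is nondecreasing in $\lambda$ because $P\geq d$. It is strictly increasing when $P>d$. Hence $G_\lambda(1,P)$ is nonincreasing in $\lambda$, and strictly decreasing if $d<P$. The case $P=0$ forces $d=0$ and $G\equiv0$; I will treat it separately.

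For (v), write
\[
\frac{1}{G}=\frac1s+\frac{\lambda-P}{sP}\cdot\frac{s-d}{\lambda-d}
\]
and regard $d$ as the variable. Since $\lambda\geq P$, the prefactor $(\lambda-P)/(sP)$ is nonnegative. The map $d\mapsto(s-d)/(\lambda-d)=1-(\lambda-s)/(\lambda-d)$ is nonincreasing on $[0,s)$ because $\lambda\geq s$. So $1/G$ is nonincreasing in $d$, and $G$ is nondecreasing in $d$.

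The only delicate points are the degenerate cases: $P=0$ (or $t=0$, where the formulas need interpretation), and $\lambda=P$ or $\lambda=s$, where the quantities are constant in the relevant variable. I expect this bookkeeping to be the main, minor, obstacle.
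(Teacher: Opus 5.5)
Your proposal is correct and follows the paper's proof essentially line by line. The steps match: the reciprocal identity obtained by dividing the denominator by $st(\lambda-d_H)$; the factorization $(\lambda-s)(t-d_H)$ for (ii); division by $s\lambda$ for (iii); and, for (iv) and (v), monotonicity of $(\lambda-P)/(\lambda-d_H)$ in $\lambda$ and of $(s-d)/(\lambda-d)$ in $d$. The only differences are that you rewrite these last two quotients as $1-(P-d)/(\lambda-d)$ and $1-(\lambda-s)/(\lambda-d)$ where the paper differentiates, and that you explicitly handle the case $P=0$, where the reciprocal form is unavailable and which the paper leaves implicit.
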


\begin{proof}
(i) Divide the denominator of \eqref{eq:G} by $st(\lambda-d_H)$.
(ii) By (i), $G_\lambda(1,t)\leq t$ if and only if
$t(\lambda-d_H)+(s-d_H)(\lambda-t)\geq s(\lambda-d_H)$, and the
difference of the two sides equals $(\lambda-s)(t-d_H)$. If $\lambda\leq s$
and $t\geq d_H$ this gives $G_\lambda(1,t)\geq t$, and
$G_\lambda(\theta,t)\geq G_\lambda(1,t)$ by (i). (iii) Divide numerator
and denominator of $G_\lambda(1,P)$ by $s\lambda$ and use
$P(\lambda-d_H)+(s-d_H)(\lambda-P)=\lambda s+\lambda P-sP-\lambda d_H$.
(iv) By (i),
$1/G_\lambda(1,P)=1/s+(s-d_H)(\lambda-P)/(sP(\lambda-d_H))$, and
$\lambda\mapsto(\lambda-P)/(\lambda-d_H)$ has derivative
$(P-d_H)/(\lambda-d_H)^2\geq0$. (v) Similarly, $d\mapsto(s-d)/(\lambda-d)$
has derivative $(s-\lambda)/(\lambda-d)^2\leq0$.
\end{proof}

\begin{theorem}\label{thm:FMimproved}
Let $\mu$ be a Borel probability measure on $\R^n$ and let $0<s\leq n$
with $\dim_H\mu<s$. Then
\begin{equation}\label{eq:FMimproved}
\dim_P^s\mu\ \geq\ \sup_{0<\theta\leq1}\Lambda_\theta,\qquad
\Lambda_\theta=\min\Big\{P_\theta,\ \frac{s\,P_\theta\,(N-d_H)}{P_\theta(N-d_H)+\theta(s-d_H)(N-P_\theta)}\Big\},
\end{equation}
where $d_H=\dim_H\mu$, $P_\theta=\dim_{P,\theta}\mu$ and $N=N_\mu$. If
$N=d_H$, then $P_\theta=d_H$ for all $\theta$ and $\Lambda_\theta$ is to
be read as $d_H$.
\end{theorem}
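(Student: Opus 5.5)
The plan is to derive the theorem from Proposition~\ref{prop:blocks} by a limiting argument in the parameters $d,t,\lambda$, for each fixed $\theta\in(0,1]$, and then take the supremum over $\theta$.

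\emph{The degenerate case.} If $N=d_H$, then the chain $d_H\leq P_\theta\leq P\leq N$ forces $P_\theta=d_H$. In this case we only need $\dim_P^s\mu\geq d_H$. This follows from Theorem~\ref{thm:threshold}, since $\dim_P^s\mu\geq\min\{s,\dim_{qH}\mu\}\geq\min\{s,d_H\}=d_H$. So assume $d_H<N$.

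\emph{Applying the proposition.} Fix $\theta$, and suppose $P_\theta>d_H$; otherwise $\Lambda_\theta\leq d_H\leq\dim_{qH}\mu\leq\dim_P^s\mu$ by \eqref{eq:qH_bounds} and there is nothing to prove. Choose $d$ with $0<d<d_H$, or $d=0$ if $d_H=0$. Since $d_H<s$, we have $d<s$. Choose $t\in(d,P_\theta)$ with $t\neq s$, and $\lambda>N$ with $\lambda\neq s$.

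Proposition~\ref{prop:blocks} then gives, for $\mu$-a.e.\ $x$, $F_s^\mu(x,h)\leq c(x)h^{\beta}$ along arbitrarily small $h$, where $\beta=\beta_\theta(d,t,\lambda)$. Hence for every $\beta'<\beta$ we get $\liminf_{h\to0}h^{-\beta'}F_s^\mu(x,h)=0$ $\mu$-a.e. This gives $\dim_P^s\mu\geq\beta_\theta(d,t,\lambda)=\min\{t,G\}$, with $G$ as in \eqref{eq:G} but computed with $d$ in place of $d_H$.

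\emph{Passing to the limit.} Now let $\lambda\downarrow N$, $t\uparrow P_\theta$ and $d\uparrow d_H$. The expression
\[
\frac{s\,t\,(\lambda-d)}{t(\lambda-d)+\theta(s-d)(\lambda-t)}
\]
is continuous in $(d,t,\lambda)$ on the relevant region: the denominator is at least $t(\lambda-d)>0$, and it stays positive at the limit $(d_H,P_\theta,N)$ because $P_\theta>d_H$ and $N>d_H$. The restrictions $t\neq s$ and $\lambda\neq s$ only exclude single values, which continuity makes harmless. At the limit point the expression equals exactly the second term of $\Lambda_\theta$, and $\min\{t,\cdot\}$ tends to $\Lambda_\theta$. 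Taking the supremum over $\theta$ finishes the proof.

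\emph{Main obstacle.} The one delicate point is the continuity in $d$ at $d=d_H$ when $d_H=0$; there $d=0$ is allowed directly, so no limit is needed. Care is also needed when $\lambda\downarrow N$ with $N=P_\theta$: the factor $N-P_\theta$ vanishes, but the expression remains continuous and equals $s$, so $\Lambda_\theta=P_\theta$.
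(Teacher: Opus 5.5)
Your proposal is correct and follows the paper's proof: apply Proposition~\ref{prop:blocks} to get $\dim_P^s\mu\ge\beta_\theta(d,t,\lambda)$, then pass to the limit $(d,t,\lambda)\to(d_H,P_\theta,N)$ using continuity, which holds because the denominator is positive there. Your single continuity argument replaces the paper's separate case $N<s$, where it takes $\lambda<s$ so that $\beta_\theta=t$; likewise your bound $\Lambda_\theta\le P_\theta=d_H$ replaces its explicit computation $\Lambda_\theta=d_H$. One harmless slip: when $P_\theta=N$ the limit value is $\Lambda_\theta=\min\{P_\theta,s\}$, not $P_\theta$, but nothing in your argument uses that remark.
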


\begin{proof}
In the notation \eqref{eq:G} with $d=d_H$,
$\Lambda_\theta=\min\{P_\theta,G_N(\theta,P_\theta)\}$. Recall that
$d_H\leq P_\theta\leq P\leq N$.

First suppose that there exists $\theta$ with $P_\theta=d_H$. By
Theorem~\ref{thm:threshold} and Lemma~\ref{lem:compare},
$\dim_P^s\mu\geq\min\{s,\dim_{qH}\mu\}\geq\min\{s,d_H\}=d_H$. If
$P_\theta=d_H<N$, then
$G_N(\theta,d_H)=sd_H/(d_H+\theta(s-d_H))\geq d_H$, so that
$\Lambda_\theta=d_H$, and if $N=d_H$ then $\Lambda_\theta=d_H$ by
convention. Hence these values of $\theta$ are covered.

Now, fix $\theta$ with $P_\theta>d_H$. Choose $d$ with
$0<d<d_H$, or $d=0$ if $d_H=0$; choose $t$ with $d<t<P_\theta$ and
$t\neq s$; and choose $\lambda>N$ with $\lambda\neq s$, and with
$\lambda<s$ if $N<s$. By Proposition~\ref{prop:blocks}, for $\mu$-almost
every $x$ there are $c(x)>0$ and $h_k\downarrow0$ with
$F_s^\mu(x,h_k)\leq c(x)\,h_k^{\beta_\theta(d,t,\lambda)}$. Hence
$\liminf_{h\to0}h^{-\beta'}F_s^\mu(x,h)=0$ for every
$\beta'<\beta_\theta(d,t,\lambda)$, and therefore
$\dim_P^s\mu\geq\beta_\theta(d,t,\lambda)$.

Now, if $N<s$, then
$\beta_\theta(d,t,\lambda)=t$ (see after \eqref{eq:beta}); letting
$t\uparrow P_\theta$ gives $\dim_P^s\mu\geq P_\theta$, and
$P_\theta=\Lambda_\theta$ by Lemma~\ref{lem:algebra}(ii). If $N\geq s$,
we let $d\uparrow d_H$, $t\uparrow P_\theta$ and $\lambda\downarrow N$
along sequences with $s<\lambda$, $d<t$ and $t\neq s$. The function
$(d,t,\lambda)\mapsto\beta_\theta(d,t,\lambda)$ is continuous at
$(d_H,P_\theta,N)$, because the denominator
$t(\lambda-d)+\theta(s-d)(\lambda-t)$ is positive there (note that
$N\geq P_\theta>d_H$), and its value at this point is $\Lambda_\theta$.
Hence $\dim_P^s\mu\geq\Lambda_\theta$.

Finally, taking the supremum over $\theta\in(0,1]$ completes the
proof.
\end{proof}

\subsection{Comparison with the bound of Falconer and Mattila}

In this subsection we compare the lower bound of Theorem~\ref{thm:FMimproved}
with the bound $\alpha_{FM}(s)$ of \eqref{eq:alphaFM}, that is, the bound
\eqref{eq:intro_FM} of Falconer and Mattila
\cite[Theorem 3.3]{falconer1996packing} extended to real exponents $s$.
We use the notation of Theorem~\ref{thm:FMimproved} and assume that
$d_H<s<n$ and $d_H<P$; if $d_H=P$, both bounds are equal to $d_H$.
Our bound is never worse than $\alpha_{FM}(s)$, and we determine exactly
when it is strictly better. If $N<n$, that is, if $\supp\mu$ is compact
and $\dim_A(\supp\mu)<n$, then the term $\theta=1$ in
\eqref{eq:FMimproved} alone already improves $\alpha_{FM}(s)$ strictly
(Proposition~\ref{prop:theta1}). If $N=n$, for instance if
$\dim_A(\supp\mu)=n$, the term $\theta=1$ coincides with
$\alpha_{FM}(s)$, and the bound \eqref{eq:FMimproved} improves
$\alpha_{FM}(s)$ strictly if and only if $\dim_{P,\theta}\mu$ is strictly
larger than the universal lower bound $\sigma_\theta(\dim_P\mu)$ of
Lemma~\ref{lem:floor} for some $\theta$ in an explicit interval
$[\theta^\dagger,1)$ (Corollary~\ref{cor:floor_improvement}). In this
case the improvement comes entirely from the packing intermediate
dimensions with $\theta<1$.

In Propositions~\ref{prop:theta1} and~\ref{prop:criterion},
Corollary~\ref{cor:floor_improvement} we
assume that $d_H<s<n$, $d_H<P$ and $d_H<N$; if $d_H=P$, then
$P_\theta=d_H$ for all $\theta$ and all the quantities involved are
equal to $d_H$.

\begin{proposition}\label{prop:theta1}
\begin{enumerate}[(a)]
\item If $N\geq s$, then
$\Lambda_1=G_N(1,P)=\dfrac{P\,(1-d_H/N)}{1+(1/s-1/N)P-d_H/s}$, that is,
the bound \eqref{eq:alphaFM} with $n$ replaced by $N$. If $N\leq s$, then
$\Lambda_1=P$.
\item $\Lambda_1\geq\alpha_{FM}(s)$, with equality if and only if $N=n$.
In particular, if $N=n$ then Theorem~\ref{thm:FMimproved} contains the
bound of \cite[Theorem 3.3]{falconer1996packing}, extended to real
exponents, and if $N<n$ it improves it strictly.
\item $\Lambda_1\geq P-\max\{0,N-s\}$, and the inequality is strict if
$s<N$ and $P<N$.
\end{enumerate}
\end{proposition}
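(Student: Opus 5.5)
The plan is to reduce everything to the explicit formula for $G_\lambda(1,t)$ and the monotonicity facts of Lemma~\ref{lem:algebra}. We work under the standing assumptions $d_H<s<n$, $d_H<P$ and $d_H<N$. Since $P_1=\dim_{P,1}\mu=P$ by Remark~\ref{rem:basic}(c), we have $\Lambda_1=\min\{P,G_N(1,P)\}$.

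For (a), apply Lemma~\ref{lem:algebra}(ii) with $\lambda=N$ and $t=P\geq d_H$. If $N\geq s$, then $(N-s)(P-d_H)\geq0$, so $G_N(1,P)\leq P$ and $\Lambda_1=G_N(1,P)$. The closed form then follows from Lemma~\ref{lem:algebra}(iii), which applies because $P\leq N$. If $N\leq s$, Lemma~\ref{lem:algebra}(ii) gives $G_N(1,P)\geq P$, hence $\Lambda_1=P$. In the overlapping case $N=s$ the two descriptions agree.

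For (b), first suppose $N\geq s$. Since $P\leq N\leq n$, Lemma~\ref{lem:algebra}(iv) shows that $\lambda\mapsto G_\lambda(1,P)$ is strictly decreasing on $[P,\infty)$, because $d_H<P$. Hence $\Lambda_1=G_N(1,P)\geq G_n(1,P)=\alpha_{FM}(s)$, with equality if and only if $N=n$; the identity $G_n(1,P)=\alpha_{FM}(s)$ is Lemma~\ref{lem:algebra}(iii).

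Now suppose $N<s$, so that $N<n$ automatically. Then $\Lambda_1=P$, and we need $P>\alpha_{FM}(s)$. By Lemma~\ref{lem:algebra}(ii), strict version, with $\lambda=n>s$ and $t=P>d_H$, we get $G_n(1,P)<P$, which is exactly this. The remark about containing or improving the Falconer--Mattila bound is then a restatement, combined with Theorem~\ref{thm:FMimproved}.

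For (c), if $s\geq N$ then $\Lambda_1=P\geq P-\max\{0,N-s\}$ trivially. The substantive case is $s<N$, where we must show
\[
G_N(1,P)\geq P-(N-s)
\]
with strict inequality when $P<N$. This algebraic inequality is the main obstacle. First, if $P\leq N-s$ the right-hand side is $\leq0$, and $G_N(1,P)>0$ already gives strictness. So assume $P>N-s$.

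Next, use Lemma~\ref{lem:algebra}(v): with $\lambda=N\geq\max\{s,P\}$, the quantity $G_N(1,P)$ is nondecreasing in $d\in[0,s)$. It therefore suffices to treat $d=0$, where $G_N(1,P)=sP/(s+P-sP/N)$ by (iii). The desired inequality becomes
\[
sPN\geq\bigl(P-N+s\bigr)\bigl(sN+PN-sP\bigr).
\]
Substituting $P=N-x$ with $0\leq x<s$ and expanding, the difference of the two sides should factor as
\[
x(s-x)(N-s)\geq0,
\]
which is nonnegative and vanishes only at $x=0$, that is, at $P=N$. This would give strictness whenever $P<N$ and $s<N$. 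Some care is needed when $d_H\geq s$, but that case is excluded by the assumption $d_H<s$.

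The only risk in the plan is whether this expansion factors as claimed. If it does not, I would fall back on treating both sides as functions of $P$ on $[N-s,N]$: they agree at $P=N$, and a concavity comparison should give the inequality.
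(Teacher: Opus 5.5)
Your proof follows the paper's route. You write $\Lambda_1=\min\{P,G_N(1,P)\}$, use Lemma~\ref{lem:algebra}(ii)--(iv) for (a) and (b), and reduce (c) to $d=0$ via Lemma~\ref{lem:algebra}(v). Parts (a) and (b) are correct as written.

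The gap is the factorization you predict at the end of (c). It is false. With $P=N-x$ one actually gets
\[
sPN-(P-N+s)(sN+PN-sP)=x(N-s)(N+s-x)=(N-P)(P+s)(N-s),
\]
not $x(s-x)(N-s)$. For instance, $N=3$, $s=2$, $P=2$ gives a difference of $12-8=4$, whereas your formula gives $1$. The correct identity is exactly the one in the paper's proof, where it appears divided by the positive denominator $PN+s(N-P)$.

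Your conclusion survives, because the middle factor is $N+s-x=P+s>0$. So the difference is nonnegative, and it vanishes only at $P=N$, which gives strictness when $s<N$ and $P<N$. With the correct factorization the inequality holds for every $P\le N$. Your separate case $P\le N-s$ is therefore unnecessary, and the concavity fallback is not needed.
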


\begin{proof}
(a) Since $P_1=P$ by Remark~\ref{rem:basic}(c),
$\Lambda_1=\min\{P,G_N(1,P)\}$. If $N\geq s$,
Lemma~\ref{lem:algebra}(ii) gives $G_N(1,P)\leq P$, and (iii) gives the
second expression. If $N\leq s$, Lemma~\ref{lem:algebra}(ii) gives
$G_N(1,P)\geq P$.

(b) If $N\geq s$, then $\Lambda_1=G_N(1,P)\geq G_n(1,P)=\alpha_{FM}(s)$ by
Lemma~\ref{lem:algebra}(iii) and (iv), with strict inequality when $N<n$
since $d_H<P$. If $N<s$, then $\Lambda_1=P>G_n(1,P)=\alpha_{FM}(s)$ by
Lemma~\ref{lem:algebra}(ii), because $n>s$ and $P>d_H$.

(c) If $N\leq s$ this is (a). Let $N>s$. By Lemma~\ref{lem:algebra}(v),
$G_N(1,P)$ is at least its value for $d_H=0$, which is
$sPN/(PN+s(N-P))$. A direct computation gives
\[
\frac{sPN}{PN+s(N-P)}-\big(P-(N-s)\big)=\frac{(N-P)(P+s)(N-s)}{PN+s(N-P)}\ \geq\ 0,
\]
with strict inequality if $P<N$.
\end{proof}

As a consequence, we obtain the following lower bound, in which the
measure enters only through its packing dimension, so that the size of
the possible loss is governed by the support alone and is common to all
measures supported on a given set.

\begin{corollary}\label{cor:assouad}
Let $\mu$ be a Borel probability measure on $\R^n$ and let $0<s\leq n$.
Then
\begin{equation}\label{eq:assouad_bound_profile}
\dim_P^s\mu\ \geq\ \dim_P\mu-\max\{0,\ N_\mu-s\}.
\end{equation}
In particular, if $\supp\mu$ is compact, then
$\dim_P^s\mu\geq\dim_P\mu-\max\{0,\dim_A(\supp\mu)-s\}$, and
$\dim_P^s\mu=\dim_P\mu$ whenever $s\geq\dim_A(\supp\mu)$.
\end{corollary}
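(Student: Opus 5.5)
The plan is to derive \eqref{eq:assouad_bound_profile} from Theorem~\ref{thm:FMimproved} together with Proposition~\ref{prop:theta1}(c), treating separately the degenerate cases excluded there. Write $d_H=\dim_H\mu$, $P=\dim_P\mu$, $N=N_\mu$. The right-hand side of \eqref{eq:assouad_bound_profile} is nonincreasing in $N$.

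First, if $d_H\geq s$, then by Lemma~\ref{lem:compare} and Theorem~\ref{thm:threshold}, $\dim_P^s\mu\geq\min\{s,\dim_{qH}\mu\}\geq\min\{s,d_H\}=s$. Since $P\leq N$, we have $P-\max\{0,N-s\}\leq P-(N-s)\leq s$ when $N\geq s$. When $N<s$ the right-hand side is $P\leq N<s$. Either way the bound holds.

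So assume $d_H<s$. If $d_H=P$, then Theorem~\ref{thm:threshold} gives $\dim_P^s\mu\geq\min\{s,d_H\}=d_H=P$, which is at least the right-hand side. The same argument covers $N=d_H$, since then $P=d_H$. Hence we may assume $d_H<P\leq N$.

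Now apply Theorem~\ref{thm:FMimproved} with $\theta=1$ only: $\dim_P^s\mu\geq\Lambda_1$. Proposition~\ref{prop:theta1}(c) gives $\Lambda_1\geq P-\max\{0,N-s\}$.

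That proposition was stated under the standing hypothesis $s<n$. I would check that the case $s=n$ is harmless. If $s=n\geq N$, then $\dim_P^n\mu=P$ by \eqref{eq:FHcor3}, which is the claim. Alternatively, the proof of (a) and (c) in the case $N\leq s$ uses only Lemma~\ref{lem:algebra}(ii), which does not need $s<n$.

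For compact support, $N_\mu=\dim_A(\supp\mu)$, which gives the stated form. If $s\geq\dim_A(\supp\mu)$, the bound gives $\dim_P^s\mu\geq P$, and Remark~\ref{rem:profiles}(c) gives $\dim_P^s\mu\leq P$, so equality holds.

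The only real work is the case bookkeeping, chiefly ensuring that the hypotheses of Proposition~\ref{prop:theta1} ($d_H<s$, $d_H<P$, $d_H<N$) are met or otherwise handled. The main algebraic content is already contained in Proposition~\ref{prop:theta1}(c).
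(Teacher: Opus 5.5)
Your proposal is correct and follows the paper's proof essentially step for step. It uses the same case split: $d_H\geq s$ and $d_H=P$ handled via Theorem~\ref{thm:threshold} and Lemma~\ref{lem:compare}, and $d_H<P\leq N$ via Theorem~\ref{thm:FMimproved} at $\theta=1$ combined with Proposition~\ref{prop:theta1}(c). The case $s=n$ is dispatched by \eqref{eq:FHcor3}, and the final equality comes from Remark~\ref{rem:profiles}(c).
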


\begin{proof}
If $s=n$, \eqref{eq:assouad_bound_profile} follows from \eqref{eq:FHcor3}.
Assume $s<n$. If $d_H\geq s$, then $N\geq P\geq d_H\geq s$, and
Theorem~\ref{thm:threshold} and Lemma~\ref{lem:compare} give
$\dim_P^s\mu=s\geq P-(N-s)$. If $d_H<s$ and $d_H=P$, then
$\dim_P^s\mu\geq\min\{s,\dim_{qH}\mu\}\geq d_H=P$ by the same results.
Finally, if $d_H<s$ and $d_H<P\leq N$, then Theorem~\ref{thm:FMimproved}
and Proposition~\ref{prop:theta1}(c) give
$\dim_P^s\mu\geq\Lambda_1\geq P-\max\{0,N-s\}$. The last assertion
follows from \eqref{eq:assouad_bound_profile} and
Remark~\ref{rem:profiles}(c).
\end{proof}

Evaluating the potential $F^\mu_s(x,\cdot)$ at the packing scale $\rho$
itself, instead of at the smaller scale $h=\rho^w$ of
Proposition~\ref{prop:blocks}, one obtains directly the bound
\eqref{eq:assouad_bound_profile}; by Proposition~\ref{prop:theta1}(c),
the term $\theta=1$ of Theorem~\ref{thm:FMimproved} is better, strictly
so when $s<N$ and $P<N$. For $d_H=0$ this term equals
$P/\bigl(1+(1/s-1/N)P\bigr)$, which for $N=n$ is the bound obtained by
taking $\dim_H\mu=0$ in \eqref{eq:intro_FM}.

\begin{remark}\label{rem:FFS}
Falconer, Fraser and Shmerkin \cite[Theorem 2.2]{falconer2021assouad}
proved that if $F\subset\R^n$ is a Borel set, $s\in(0,n]$ and
$\theta\in(0,1)$, then
$$\dim_P^{s}F\ \geq\ \dim_P F-\max\bigl\{0,\ \overline{\dim}_A^{\theta}F-s,\
(\dim_A F-s)(1-\theta)\bigr\},$$
where $\overline{\dim}_A^{\theta}F\leq\dim_A F$ denotes the upper
Assouad spectrum \cite{fraser2020assouad}. Since the maximum above never
exceeds $\max\{0,\dim_A F-s\}$, this gives
\begin{equation}\label{eq:FFS_assouad_form}
\dim_P^{s}F\ \geq\ \dim_P F-\max\{0,\ \dim_A F-s\},
\end{equation}
which we refer to as the \emph{Assouad dimension form} of their theorem.
In \cite{falconer2021assouad} the profiles of sets are defined through
capacities; for integer exponents this definition coincides with the one
of Section~\ref{sec:prelim}, since both quantities equal the packing
dimension of $\pi_V F$ for $\gamma_{n,m}$-almost every $V\in G(n,m)$, see
\cite[Theorem 10]{falconer1997packing} and
\cite[Theorem 2.1]{falconer2021assouad}.

Corollary~\ref{cor:assouad} is not an adaptation of their argument, which
relies on the capacity formulation of the box dimension profiles, in which
the profile of a set is defined through an infimum over all measures
supported on it. This allows one to choose, at each scale $r$, a measure
adapted to that scale, and their proof takes it to be the normalized
counting measure on a maximal $r$-separated subset of $F$, the packing
case being deduced afterwards by a Baire category argument. In the present
setting no such freedom is available: the measure $\mu$ is given and is
the same at every scale, and the profile is defined through the behaviour
of its own potentials at $\mu$-almost every point. Our argument relies
instead on the regular measure of Lemma~\ref{lem:regular}, which replaces
the Lebesgue measure in the argument of Falconer and Mattila.

On the other hand, Corollary~\ref{cor:assouad} yields
\eqref{eq:FFS_assouad_form} for analytic sets. Indeed, let $E\subset\R^n$
be an analytic set. For every $\mu\in\mathcal{M}_c^+(E)$ we have
$\dim_A(\supp\mu)\leq\dim_A E$, so that
$$\dim_P^s\mu\geq\dim_P\mu-\max\{0,\ \dim_A E-s\},$$
and taking the supremum over $\mu\in\mathcal{M}_c^+(E)$, together with
\eqref{eq:dimP_sets} and the definition of $\dim_P^s E$, gives
$\dim_P^s E\geq\dim_P E-\max\{0,\ \dim_A E-s\}$.
\end{remark}

We now turn to the terms $\theta<1$ in \eqref{eq:FMimproved}, which are
the ones that require the packing intermediate dimensions. Recall the
function $\sigma_\theta(P)=\theta NP/(N-(1-\theta)P)$ of
Lemma~\ref{lem:floor}.

\begin{proposition}\label{prop:criterion}
Let $\theta\in(0,1)$ and $t\in(0,N]$.
\begin{enumerate}[(a)]
\item $G_N(\theta,t)\geq G_N(1,P)$ if and only if $t\geq\sigma_\theta(P)$,
and the same holds with strict inequalities and with equalities. The
threshold $\sigma_\theta(P)$ depends neither on $s$ nor on $d_H$.
\item Assume $s<N$ and $P<N$, and put $G=G_N(1,P)=\Lambda_1$. Then
$d_H<G<P$, the function $\theta\mapsto\sigma_\theta(P)$ is increasing
with $\sigma_1(P)=P$, and the number
\[
\theta^\dagger=\frac{G\,(N-P)}{P\,(N-G)}\in(0,1)
\]
satisfies $\sigma_{\theta^\dagger}(P)=G$. Moreover $\Lambda_\theta>\Lambda_1$
if and only if $P_\theta>\max\{\sigma_\theta(P),G\}$, and for
$\theta\in[\theta^\dagger,1)$ this condition reduces to
$P_\theta>\sigma_\theta(P)$.
\item If $N\leq s$ or $P=N$, then $\Lambda_\theta\leq\Lambda_1$ for every
$\theta\in(0,1]$.
\end{enumerate}
\end{proposition}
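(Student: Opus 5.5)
The whole proposition reduces to the reciprocal formula of Lemma~\ref{lem:algebra}(i), applied with $\lambda=N$ and $d=d_H$. Both $s>d_H$ and $N>d_H$ are standing assumptions of this subsection, so the formula is available. The overall strategy is to turn every comparison of $G$-values into a linear inequality in the reciprocals, and then read off the threshold $\sigma_\theta(P)$.

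\emph{Step (a).} By Lemma~\ref{lem:algebra}(i),
\[
\frac1{G_N(\theta,t)}-\frac1s=\frac{s-d_H}{s(N-d_H)}\,\theta\Big(\frac Nt-1\Big),
\qquad
\frac1{G_N(1,P)}-\frac1s=\frac{s-d_H}{s(N-d_H)}\Big(\frac NP-1\Big).
\]
The common positive factor cancels. Hence $G_N(\theta,t)\geq G_N(1,P)$ is equivalent to $\theta(N-t)/t\leq(N-P)/P$. Multiplying out, this becomes $\theta PN\leq t\,(N-(1-\theta)P)$, that is, $t\geq\sigma_\theta(P)$. The denominator $N-(1-\theta)P$ is positive by Lemma~\ref{lem:floor}. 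Every step is an equivalence, so the strict and equality versions follow at once. The factor $(s-d_H)/(s(N-d_H))$ has cancelled, which shows that the threshold depends neither on $s$ nor on $d_H$.

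\emph{Step (b).} First, $G<P$ follows from Lemma~\ref{lem:algebra}(ii) with $\lambda=N>s$ and $P>d_H$, strict version. For $G>d_H$, I will use that $t\mapsto G_N(1,t)$ is strictly increasing and that $G_N(1,d_H)=sd_H/(d_H+s-d_H)=d_H$. Next, rewrite
\[
\sigma_\theta(P)=\frac{NP}{(N-P)/\theta+P}.
\]
Since $N>P$, this is increasing in $\theta$, and it equals $P$ at $\theta=1$. Solving $\sigma_\theta(P)=G$ gives $(N-P)/\theta=NP/G-P=P(N-G)/G$, which yields the stated $\theta^\dagger$. We have $\theta^\dagger\in(0,1)$ because $0<G<P<N$.

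For the criterion, note that $\Lambda_\theta=\min\{P_\theta,G_N(\theta,P_\theta)\}$ and $\Lambda_1=G$. Therefore $\Lambda_\theta>G$ holds if and only if both $P_\theta>G$ and $G_N(\theta,P_\theta)>G$. By (a), applicable since $0<P_\theta\leq P\leq N$ once $P_\theta>G>0$, the second condition is $P_\theta>\sigma_\theta(P)$. Finally, for $\theta\geq\theta^\dagger$ monotonicity gives $\sigma_\theta(P)\geq G$, so the maximum equals $\sigma_\theta(P)$.

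\emph{Step (c).} Suppose first $N\leq s$. Then Proposition~\ref{prop:theta1}(a) gives $\Lambda_1=P$, while $\Lambda_\theta\leq P_\theta\leq P$. Now suppose $P=N$ and $N>s$. Then $\sigma_\theta(P)=P\geq P_\theta$, so (a) gives $G_N(\theta,P_\theta)\leq G_N(1,P)$. Together with $\Lambda_\theta\leq P_\theta\leq P$, this yields $\Lambda_\theta\leq\min\{P,G_N(1,P)\}=\Lambda_1$. The degenerate case $P_\theta=0$ is trivial, since then $\Lambda_\theta=0$.

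I expect no real obstacle. The only care needed is with degenerate cases ($P_\theta=0$, or $t$ at the endpoint $N$) and with making sure every manipulation is an equivalence, so that the strict and equality versions in (a) come for free.
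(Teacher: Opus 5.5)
Your proposal is correct and follows essentially the same route as the paper. Both reduce everything to the reciprocal identity of Lemma~\ref{lem:algebra}(i), and both read (b) and (c) off from (a), Lemma~\ref{lem:algebra}(ii) and Proposition~\ref{prop:theta1}(a). The differences are cosmetic: you obtain monotonicity of $\theta\mapsto\sigma_\theta(P)$ by rewriting it rather than differentiating, and you treat the degenerate case $P_\theta=0$ explicitly.
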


\begin{proof}
(a) By Lemma~\ref{lem:algebra}(i), after cancelling the positive factor
$(s-d_H)/(s(N-d_H))$, the inequality $G_N(\theta,t)\geq G_N(1,P)$ is
equivalent to $\theta(N/t-1)\leq N/P-1$, that is, to
$\theta P(N-t)\leq t(N-P)$, that is, to $t(N-P+\theta P)\geq\theta NP$.
The strict version and the version with equalities are identical.

(b) $G<P$ follows from Lemma~\ref{lem:algebra}(ii), since $N>s$ and
$P>d_H$, and $G>d_H$ follows from Lemma~\ref{lem:algebra}(i) and
$G_N(1,d_H)=d_H$. The derivative of $\theta\mapsto\sigma_\theta(P)$ is
$NP(N-P)/(N-P+\theta P)^2>0$. Solving $\theta NP=G(N-P+\theta P)$ gives
$\theta^\dagger$, and $0<\theta^\dagger<1$ because $0<G<P$. Finally,
$\Lambda_\theta>G$ means $P_\theta>G$ and $G_N(\theta,P_\theta)>G$, and
by (a) the latter means $P_\theta>\sigma_\theta(P)$. For
$\theta\geq\theta^\dagger$ we have $\sigma_\theta(P)\geq G$.

(c) If $N\leq s$, then $\Lambda_\theta=P_\theta\leq P=\Lambda_1$ by
Lemma~\ref{lem:algebra}(ii) and Proposition~\ref{prop:theta1}(a). If
$P=N$, then $\sigma_\theta(P)=P\geq P_\theta$, and (a) gives
$G_N(\theta,P_\theta)\leq G_N(1,P)$, so that
$\Lambda_\theta\leq\min\{P,G_N(1,P)\}=\Lambda_1$.
\end{proof}

Combining Proposition~\ref{prop:criterion} with the universal lower bound
of Lemma~\ref{lem:floor}, we obtain the following description of the
improvement provided by the packing intermediate dimensions.

\begin{corollary}\label{cor:floor_improvement}
Assume $s<N$ and $P<N$, and let $G=\Lambda_1$ and $\theta^\dagger$ be as
in Proposition~\ref{prop:criterion}(b).
\begin{enumerate}[(i)]
\item For every $\theta\in[\theta^\dagger,1]$ we have
$\Lambda_\theta\geq\Lambda_1$. Hence on this range the terms $\theta<1$
in \eqref{eq:FMimproved} never fall below the term $\theta=1$, and in
particular never below $\alpha_{FM}(s)$.
\item $\sup_\theta\Lambda_\theta>\Lambda_1$ if and only if there is
$\theta\in[\theta^\dagger,1)$ with
$\dim_{P,\theta}\mu>\sigma_\theta(\dim_P\mu)$, that is, if and only if
$\dim_{P,\theta}\mu$ exceeds strictly the universal lower bound of
Lemma~\ref{lem:floor} for some $\theta\in[\theta^\dagger,1)$.
\item In the situation of (ii), Theorem~\ref{thm:FMimproved} improves
strictly the bound $\alpha_{FM}(s)$, also when $N=n$.
\end{enumerate}
\end{corollary}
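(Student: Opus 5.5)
The corollary follows from Proposition~\ref{prop:criterion}(b) combined with Lemma~\ref{lem:floor}. We work under the standing assumptions $d_H<s<n$, $d_H<P$ and $d_H<N$, together with $s<N$ and $P<N$. Then Proposition~\ref{prop:criterion}(b) applies and gives $d_H<G<P$, the monotonicity of $\theta\mapsto\sigma_\theta(P)$, and $\sigma_{\theta^\dagger}(P)=G$.

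\emph{Proof of (i).} Fix $\theta\in[\theta^\dagger,1]$.
\begin{itemize}
\item By monotonicity, $\sigma_\theta(P)\geq\sigma_{\theta^\dagger}(P)=G$.
\item Lemma~\ref{lem:floor} gives $P_\theta\geq\sigma_\theta(P)$.
\item Proposition~\ref{prop:criterion}(a) with $t=P_\theta$ then gives $G_N(\theta,P_\theta)\geq G_N(1,P)=G$.
\item We also have $P_\theta\geq\sigma_\theta(P)\geq G$.
\end{itemize}
Hence $\Lambda_\theta=\min\{P_\theta,G_N(\theta,P_\theta)\}\geq G=\Lambda_1$. The comparison with $\alpha_{FM}(s)$ then follows from Proposition~\ref{prop:theta1}(b). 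This step is just chaining inequalities; the only thing to note is that $P_\theta\leq N$, so $t=P_\theta\in(0,N]$ is admissible in (a).

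\emph{Proof of (ii), direction $\Leftarrow$.} Suppose some $\theta\in[\theta^\dagger,1)$ has $P_\theta>\sigma_\theta(P)$. Since $\sigma_\theta(P)\geq G$, this gives $P_\theta>\max\{\sigma_\theta(P),G\}$. By Proposition~\ref{prop:criterion}(b), $\Lambda_\theta>\Lambda_1$, so $\sup_\theta\Lambda_\theta>\Lambda_1$.

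\emph{Proof of (ii), direction $\Rightarrow$.} This is the main obstacle. Suppose some $\theta_0\in(0,1)$ has $\Lambda_{\theta_0}>\Lambda_1$; this holds whenever the supremum exceeds $\Lambda_1$. Proposition~\ref{prop:criterion}(b) gives $P_{\theta_0}>\max\{\sigma_{\theta_0}(P),G\}$.
\begin{itemize}
\item If $\theta_0\geq\theta^\dagger$, we are done.
\item If $\theta_0<\theta^\dagger$, we transport the strict inequality to $\theta^\dagger$ using monotonicity of $\theta\mapsto P_\theta$ (Remark~\ref{rem:basic}(b)): $P_{\theta^\dagger}\geq P_{\theta_0}>G=\sigma_{\theta^\dagger}(P)$.
\end{itemize}
So $\theta=\theta^\dagger$ is a witness, and $\theta^\dagger<1$ by Proposition~\ref{prop:criterion}(b). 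This is exactly why the interval begins at $\theta^\dagger$: below it the binding constraint is $G$, which is constant, while $P_\theta$ only grows with $\theta$.

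\emph{Proof of (iii).} By Theorem~\ref{thm:FMimproved}, $\dim_P^s\mu\geq\sup_\theta\Lambda_\theta$. In the situation of (ii) this supremum is strictly larger than $\Lambda_1$, and Proposition~\ref{prop:theta1}(b) gives $\Lambda_1\geq\alpha_{FM}(s)$. This holds in particular when $N=n$, where $\Lambda_1=\alpha_{FM}(s)$.
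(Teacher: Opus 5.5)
Your proposal is correct and follows essentially the same route as the paper. You chain Lemma~\ref{lem:floor} with Proposition~\ref{prop:criterion}(a),(b) for (i) and for both directions of (ii), transporting the strict inequality to $\theta^\dagger$ by monotonicity of $\theta\mapsto P_\theta$ when $\theta_0<\theta^\dagger$, and you get (iii) from $\sup_\theta\Lambda_\theta>\Lambda_1\geq\alpha_{FM}(s)$.
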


\begin{proof}
(i) By Lemma~\ref{lem:floor} and Proposition~\ref{prop:criterion}(b),
$P_\theta\geq\sigma_\theta(P)\geq\sigma_{\theta^\dagger}(P)=G$, and by
Proposition~\ref{prop:criterion}(a), $G_N(\theta,P_\theta)\geq G$. Hence
$\Lambda_\theta\geq G$. The last assertion follows from
Proposition~\ref{prop:theta1}(b).

(ii) If $P_\theta>\sigma_\theta(P)$ for some $\theta\in[\theta^\dagger,1)$,
then $\Lambda_\theta>\Lambda_1$ by Proposition~\ref{prop:criterion}(b).
Conversely, let $\Lambda_\theta>\Lambda_1$ for some $\theta$. Then
$P_\theta>\max\{\sigma_\theta(P),G\}$, and $\theta<1$ since
$P_1=P=\sigma_1(P)$. If $\theta\geq\theta^\dagger$ we are done. If
$\theta<\theta^\dagger$, then by monotonicity
$P_{\theta^\dagger}\geq P_\theta>G=\sigma_{\theta^\dagger}(P)$, so that
$\theta^\dagger$ has the required property.

(iii) This follows from (ii) and $\Lambda_1\geq\alpha_{FM}(s)$.
\end{proof}

\section{Packing dimension of H\"{o}lder images}\label{sec:images}

In this section we apply the results of Sections~\ref{sec:threshold}
and~\ref{sec:lower} to the images of measures under random H\"{o}lder
maps, a family which includes fractional Brownian motion and orthogonal
projections as particular cases. The link between images and profiles is
given by Theorem~\ref{thm:images}, which contains \eqref{eq:intro_FH} and
\eqref{eq:intro_xiao} as particular cases; its proof only involves the
profiles $\dim_P^s\mu$, through the elementary estimates of
Propositions~\ref{prop:holder_upper} and~\ref{prop:random_lower}.

\begin{definition}\label{def:random_holder}
Let $(\Omega,\mathcal{F},\mathbb{P})$ be a probability space, let
$K\subset\R^n$ be a Borel set, $m\in\mathbb{N}$ and $0<\alpha\leq1$. A
family $\{f_\omega:K\to\R^m\}_{\omega\in\Omega}$ of continuous maps such
that $(\omega,x)\mapsto f_\omega(x)$ is
$\mathcal{F}\otimes\mathcal{B}(K)$-measurable is a \emph{random
$\alpha$-H\"{o}lder map} from $K$ to $\R^m$ if the following two
conditions hold:
\begin{enumerate}
\item[(H1)] for $\mathbb{P}$-almost every $\omega$ and every
$\beta\in(0,\alpha)$, $f_\omega$ is $\beta$-H\"{o}lder continuous on every
compact subset of $K$;
\item[(H2)] there exists $c>0$ such that, for all $x,y\in K$ and $r>0$,
$$\mathbb{P}\bigl(\{\omega\,:\,|f_\omega(x)-f_\omega(y)|\leq r\}\bigr)\leq
c\,\min\bigl\{1,\ r^{m}|x-y|^{-\alpha m}\bigr\}.$$
\end{enumerate}
\end{definition}

Condition (H1) is used to bound the dimension of the images from above,
and (H2), a non-degeneracy condition, to bound it from below. Note that
the right-hand side of (H2) is, up to the constant, the kernel of the
$\alpha m$-dimensional profile at the scale $r^{1/\alpha}$, since
$\min\{1,r^m|x-y|^{-\alpha m}\}=\min\{1,(r^{1/\alpha})^{\alpha m}|x-y|^{-\alpha m}\}$.

\begin{example}[Fractional Brownian motion]\label{ex:fbm}
Let $0<\alpha<1$ and let $X:\R^n\to\R^m$ be an index-$\alpha$ fractional
Brownian motion defined on $(\Omega,\mathcal{F},\mathbb{P})$. Condition (H1) is the H\"{o}lder continuity of the sample
paths recalled in Section~\ref{sec:prelim}. 
Moreover, it was proved in \cite[Theorem 3.4]{Burrell} that $X$ satisfies (H2). Hence $X$ is a random
$\alpha$-H\"{o}lder map from $\R^n$ to $\R^m$.
\end{example}

\begin{example}[Orthogonal projections]\label{ex:projections}
Let $1\leq m\leq n$. Its well known that the set of $m-$planes in $\mathbb{R}^n$, $G(n,m)$, endowed with its Haar measure $\gamma_{n,m}$ satisfies (H2) with $\alpha=1$, see for example
\cite[Chapter 3]{mattila1999geometry}. Since any orthogonal projection is $1$-Lipschitz, we have that the family $\{P_V\}_{V\in G(n,m)}$ is a random $1$-H\"{o}lder
map from $\R^n$ to $\R^m$.
\end{example}

We now establish the two estimates on which the identity of
Theorem~\ref{thm:images} relies. The first one is deterministic.

\begin{proposition}\label{prop:holder_upper}
Let $\mu$ be a Borel probability measure on $\R^n$, let $m\in\mathbb{N}$
and $0<\beta\leq1$, and let $f:\supp\mu\to\R^m$ be $\beta$-H\"{o}lder
continuous. Then
$$\dim_P\mu_f\ \leq\ \frac{1}{\beta}\,\dim_P^{\beta m}\mu .$$
Consequently, if $0<\alpha\leq1$ and $f$ is $\beta$-H\"{o}lder continuous
for every $\beta\in(0,\alpha)$, then
$\dim_P\mu_f\leq\frac{1}{\alpha}\dim_P^{\alpha m}\mu$.
\end{proposition}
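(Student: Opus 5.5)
The plan is to compare the ball measures of $\mu_f$ at $f(x)$ with the $\beta m$-dimensional profile kernel of $\mu$ at $x$. We then use the local characterization \eqref{eq:hu} of $\dim_P\mu_f$.

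First, the pointwise comparison. Let $c$ be the Hölder constant of $f$. For $x\in\supp\mu$ and small $r>0$ we have
$$\mu_f\bigl(B(f(x),r)\bigr)=\mu\{y:|f(y)-f(x)|\leq r\}.$$
This set is not contained in a ball around $x$, so we cannot simply bound it by $\mu(B(x,\cdot))$. The Hölder condition only gives the reverse inclusion $B(x,(r/c)^{1/\beta})\cap\supp\mu\subset f^{-1}(B(f(x),r))$. This is the main obstacle: an upper bound on $\mu_f$ of balls in terms of $\mu$ is not available pointwise.

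So I would instead bound the profile of $\mu$ at $x$ from below by the profile of $\mu_f$ at $f(x)$. Set $\tilde r=(r/c)^{1/\beta}$. For every $y$,
$$\min\{1,r^m|f(x)-f(y)|^{-m}\}\geq\min\{1,r^m c^{-m}|x-y|^{-\beta m}\}=\min\{1,\tilde r^{\beta m}|x-y|^{-\beta m}\}.$$
Integrating in $\mu(y)$ gives
$$F^{\mu_f}_m(f(x),r)\geq F^\mu_{\beta m}(x,\tilde r).$$
This inequality goes the wrong way for an upper bound on $\dim_P\mu_f$. Hence the argument must use that $\dim_P\mu_f=\dim_P^m\mu_f$ (by \eqref{eq:FHcor3} applied in $\R^m$) and run by contradiction on sets of positive measure, as in Proposition~\ref{prop:lipschitz}.

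Concretely, let $t>\frac1\beta\dim_P^{\beta m}\mu$ and set $t_0=\beta t>\dim_P^{\beta m}\mu$. By the definition of the profile, the set $A$ of $x$ with $\liminf_{\tilde r\to0}\tilde r^{-t_0}F^\mu_{\beta m}(x,\tilde r)>0$ has positive $\mu$-measure. For $x\in A$ the displayed inequality gives
$$\liminf_{r\to0} r^{-t}F^{\mu_f}_m(f(x),r)\geq c'\liminf_{\tilde r\to 0}\tilde r^{-\beta t}F^\mu_{\beta m}(x,\tilde r)>0,$$
since $r^{t}=c^{t}\tilde r^{\beta t}$. Thus the set $\{w:\liminf_{r\to 0} r^{-t}F^{\mu_f}_m(w,r)>0\}$ contains $f(A)$ and has $\mu_f$-measure at least $\mu(A)>0$. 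By Remark~\ref{rem:profiles}(a) this gives $\dim_P^m\mu_f\leq t$. We then conclude with $\dim_P\mu_f=\dim_P^m\mu_f$ and let $t$ decrease to $\frac1\beta\dim_P^{\beta m}\mu$. Measurability of the exceptional sets is handled as in Proposition~\ref{prop:lipschitz}, using that $f$ is continuous.

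For the consequence, apply the first part for each $\beta<\alpha$ to get $\dim_P\mu_f\leq\beta^{-1}\dim_P^{\beta m}\mu\leq\beta^{-1}\dim_P^{\alpha m}\mu$. The last step uses the monotonicity of Remark~\ref{rem:profiles}(b), since $\beta m\leq\alpha m$. Letting $\beta\uparrow\alpha$ gives the claim.
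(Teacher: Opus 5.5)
Your proposal is correct and follows the paper's proof: you bound $F^\mu_{\beta m}(x,\cdot)$ by a constant times $F^{\mu_f}_m(f(x),\cdot)$ at matched scales, take the positive-measure set $A$ where the $\beta m$-profile $\liminf$ is positive and note $A\subset f^{-1}$ of the corresponding set for $\mu_f$, then conclude with $\dim_P\mu_f=\dim_P^m\mu_f$ and with monotonicity of the profiles for the consequence. The only cosmetic difference is that you absorb the H\"older constant into the rescaled radius $\tilde r=(r/c)^{1/\beta}$, whereas the paper uses the radius $r^{1/\beta}$ and a multiplicative constant $\max\{1,c^m\}$.
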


\begin{proof}
Let $c>0$ be such that $|f(x)-f(y)|\leq c\,|x-y|^{\beta}$ for all
$x,y\in\supp\mu$. Then, for $x,y\in\supp\mu$ and $r>0$,
$$\min\bigl\{1,\ r^m|x-y|^{-\beta m}\bigr\}\leq\min\bigl\{1,\ c^m r^m|f(x)-f(y)|^{-m}\bigr\}
\leq C\min\bigl\{1,\ r^m|f(x)-f(y)|^{-m}\bigr\},$$
with $C=\max\{1,c^m\}$, and integrating with respect to $\mu(y)$ we
obtain, by the definition of the image measure,
\begin{equation}\label{eq:holder_kernels}
F^\mu_{\beta m}\bigl(x,r^{1/\beta}\bigr)\leq C\,F^{\mu_f}_m\bigl(f(x),r\bigr)\qquad\text{for every }x\in\supp\mu .
\end{equation}
Let $u>\dim_P^{\beta m}\mu/\beta$. Since $\beta u>\dim_P^{\beta m}\mu$,
the set
$$A:=\Bigl\{x\in\supp\mu\,:\,\liminf_{\rho\to0}\rho^{-\beta u}F^\mu_{\beta m}(x,\rho)>0\Bigr\}$$
has positive $\mu$-measure, and for $x\in A$, by
\eqref{eq:holder_kernels},
\begin{align*}
\liminf_{r\to0}r^{-u}F^{\mu_f}_m\bigl(f(x),r\bigr)&\geq C^{-1}\liminf_{r\to0}r^{-u}F^\mu_{\beta m}\bigl(x,r^{1/\beta}\bigr)\\
&=C^{-1}\liminf_{\rho\to0}\rho^{-\beta u}F^\mu_{\beta m}(x,\rho)>0 .
\end{align*}
Hence
$A\subset f^{-1}\bigl(\{w:\liminf_{r\to0}r^{-u}F^{\mu_f}_m(w,r)>0\}\bigr)$,
the latter set has positive $\mu_f$-measure, and by
Remark~\ref{rem:profiles}(a), $\dim_P^m\mu_f\leq u$. As $\mu_f$ is a
measure on $\R^m$, \eqref{eq:FHcor3} gives $\dim_P\mu_f=\dim_P^m\mu_f\leq u$,
and since $u>\dim_P^{\beta m}\mu/\beta$ was arbitrary, the first claim
follows. If $f$ is $\beta$-H\"{o}lder continuous for every
$\beta\in(0,\alpha)$, then by Remark~\ref{rem:profiles}(b),
$\dim_P\mu_f\leq\dim_P^{\beta m}\mu/\beta\leq\dim_P^{\alpha m}\mu/\beta$
for every $\beta<\alpha$, and letting $\beta\to\alpha$ completes the
proof.
\end{proof}

The second estimate is probabilistic, and only uses condition (H2).

\begin{proposition}\label{prop:random_lower}
Let $(\Omega,\mathcal{F},\mathbb{P})$ be a probability space, $\mu$ a
Borel probability measure on $\R^n$, $m\in\mathbb{N}$ and
$0<\alpha\leq1$. Let $\{f_\omega:\supp\mu\to\R^m\}_{\omega\in\Omega}$ be
a family of continuous maps such that $(\omega,x)\mapsto f_\omega(x)$ is
$\mathcal{F}\otimes\mathcal{B}(\supp\mu)$-measurable and (H2) holds with
$K=\supp\mu$. Then
$$\dim_P\mu_{f_\omega}\ \geq\ \frac{1}{\alpha}\,\dim_P^{\alpha m}\mu\qquad\text{for }\mathbb{P}\text{-almost every }\omega\in\Omega.$$
\end{proposition}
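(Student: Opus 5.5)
The plan is a Fubini/Fatou argument that transfers small values of the $\alpha m$-profile of $\mu$ to small values of the $m$-dimensional potential of $\mu_{f_\omega}$. Fix $t<\dim_P^{\alpha m}\mu$ and $t'<t$. It suffices to show that, for $\mathbb P$-a.e.\ $\omega$, $\dim_P\mu_{f_\omega}\geq t'/\alpha$. Taking $t'\uparrow\dim_P^{\alpha m}\mu$ along a countable sequence then gives the claim. We may assume $\dim_P^{\alpha m}\mu>0$.

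\emph{Averaging the potential.} By (H2) and Tonelli, for every $x\in\supp\mu$ and $r>0$,
$$\int_\Omega \mu_{f_\omega}\bigl(B(f_\omega(x),r)\bigr)\,d\mathbb P(\omega)=\int \mathbb P\{|f_\omega(x)-f_\omega(y)|\leq r\}\,d\mu(y)\leq c\,F^\mu_{\alpha m}(x,r^{1/\alpha}).$$
Working with ball masses is enough, because \eqref{eq:hu} characterizes $\dim_P\mu_{f_\omega}$ through $\liminf_{r\to0}r^{-u}\mu_{f_\omega}(B(w,r))$. The points $w$ can be taken of the form $f_\omega(x)$, since $\mu_{f_\omega}$-a.e.\ $w$ has this form with $x$ ranging over a $\mu$-full set. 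Measurability of $(\omega,x)\mapsto\mu_{f_\omega}(B(f_\omega(x),r))$ follows from joint measurability of $f$ and Tonelli.

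\emph{Main obstacle: the $\liminf$ is along $x$-dependent scales.} Remark~\ref{rem:profiles}(a) gives, for $\mu$-a.e.\ $x$, scales $\rho_k(x)\to0$ with $F^\mu_{\alpha m}(x,\rho_k)\leq\rho_k^{t}$. Averaging over $\omega$ only yields control in expectation at each fixed scale, so a pathwise $\liminf$ statement is not immediate. I would handle this with Fatou's lemma. For fixed $x$ and $r_k=\rho_k(x)^\alpha$, set $u=t'/\alpha$. Then
$$\mathbb E\Bigl[\liminf_{k}r_k^{-u}\mu_{f_\omega}(B(f_\omega(x),r_k))\Bigr]\leq\liminf_k r_k^{-u}\,c\,\rho_k^{t}=\liminf_k c\,\rho_k^{t-t'}=0.$$
Hence for each such $x$, $\mathbb P$-a.s.\ $\liminf_{r\to0}r^{-u}\mu_{f_\omega}(B(f_\omega(x),r))=0$. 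The scales $r_k$ are deterministic once $x$ is fixed, so this step is legitimate. The exceptional set in $\Omega\times\supp\mu$ has measurable sections of measure zero. Joint measurability of the event $\{\liminf_{r\to0}\cdots=0\}$ needs care, since it involves a $\liminf$ over uncountably many $r$. I would reduce to rational $r$ using monotonicity of $r\mapsto\mu(B(w,r))$: $\mu(B(w,r))\leq\mu(B(w,q))$ for rational $q\in[r,2r]$, at the cost of a factor $2^u$. Fubini then gives that the exceptional set is $\mathbb P\otimes\mu$-null.

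\emph{Conclusion.} By Fubini, for $\mathbb P$-a.e.\ $\omega$ the following holds for $\mu$-a.e.\ $x$: $\liminf_{r\to0}r^{-u}\mu_{f_\omega}(B(f_\omega(x),r))=0$. Equivalently, the set of $w$ where this fails is $\mu_{f_\omega}$-null. By \eqref{eq:hu}, $\dim_P\mu_{f_\omega}\geq u=t'/\alpha$. Intersecting over a countable sequence $t'\uparrow\dim_P^{\alpha m}\mu$ gives the almost sure bound $\dim_P\mu_{f_\omega}\geq\frac1\alpha\dim_P^{\alpha m}\mu$.
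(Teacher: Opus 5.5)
Your proposal is correct and follows essentially the same route as the paper: Tonelli and (H2) at each fixed scale, then Fatou's lemma along the $x$-dependent but $\omega$-independent scales $r_k=\rho_k^{\alpha}$, then Fubini. The only differences are that you get decay from $t'<t$ where the paper uses an arbitrary $\varepsilon$, and that you address the joint measurability the paper leaves implicit. One small fix in that measurability remark: for the real and rational $\liminf$ to vanish together you need rational $q\in[r/2,r]$, which gives $q^{-u}\mu(B(w,q))\leq 2^{u}r^{-u}\mu(B(w,r))$; the inequality you wrote with $q\in[r,2r]$ only yields the trivial direction.
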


\begin{proof}
Let $0<t<\dim_P^{\alpha m}\mu$. By Remark~\ref{rem:profiles}(a), for
$\mu$-a.e.\ $x$ and every $\varepsilon>0$ there exists a sequence
$\rho_k\to0$ such that $F^\mu_{\alpha m}(x,\rho_k)<\varepsilon\,\rho_k^{t}$.
Fix $x$ in the corresponding full-measure set and put
$r_k:=\rho_k^{\alpha}$. By Fubini's theorem and (H2), for every $r>0$,
\begin{align*}
\int r^{-t/\alpha}\mu_{f_\omega}\bigl(B(f_\omega(x),r)\bigr)\,d\mathbb{P}(\omega)
&=r^{-t/\alpha}\int\mathbb{P}\bigl(\{\omega:|f_\omega(x)-f_\omega(y)|\leq r\}\bigr)\,d\mu(y)\\
&\leq c\,r^{-t/\alpha}\int\min\bigl\{1,\ r^m|x-y|^{-\alpha m}\bigr\}\,d\mu(y)\\
&=c\,r^{-t/\alpha}F^\mu_{\alpha m}\bigl(x,r^{1/\alpha}\bigr).
\end{align*}
Along the sequence $\{r_k\}$ the right-hand side is at most
$c\,\varepsilon$; hence, by Fatou's lemma,
$$\int\liminf_{r\to0}\, r^{-t/\alpha}\mu_{f_\omega}\bigl(B(f_\omega(x),r)\bigr)\,d\mathbb{P}(\omega)\leq c\,\varepsilon.$$
Note that the left-hand side no longer involves the sequence $\{r_k\}$,
which depended on $x$ and $\varepsilon$. Since $\varepsilon>0$ was
arbitrary, for $\mathbb{P}$-a.e.\ $\omega$ we have
$\liminf_{r\to0}r^{-t/\alpha}\mu_{f_\omega}(B(f_\omega(x),r))=0$. This
holds for $\mu$-a.e.\ $x$, so by Fubini's theorem, for $\mathbb{P}$-a.e.\
$\omega$ it holds for $\mu$-a.e.\ $x$. For such an $\omega$, the set
$Z_\omega$ of points $w\in\R^m$ with
$\liminf_{r\to0}r^{-t/\alpha}\mu_{f_\omega}(B(w,r))=0$ satisfies
$\mu_{f_\omega}(Z_\omega)=\mu(f_\omega^{-1}(Z_\omega))=1$, and therefore
$t/\alpha\leq\dim_P\mu_{f_\omega}$ by \eqref{eq:hu}. Letting
$t\to\dim_P^{\alpha m}\mu$ along a countable sequence and intersecting
the corresponding almost sure events completes the proof.
\end{proof}

\begin{theorem}\label{thm:images}
Let $\mu$ be a Borel probability measure on $\R^n$ with compact support,
let $m\in\mathbb{N}$ and $0<\alpha\leq1$, and let $\{f_\omega\}$ be a
random $\alpha$-H\"{o}lder map from $\supp\mu$ to $\R^m$. Then
$$\dim_P\mu_{f_\omega}=\frac{1}{\alpha}\,\dim_P^{\alpha m}\mu\qquad\text{for }\mathbb{P}\text{-almost every }\omega\in\Omega .$$
\end{theorem}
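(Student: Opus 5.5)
The proof combines the two estimates just established, one for each direction of the equality.

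\emph{Upper bound.} By (H1) there is a $\mathbb{P}$-null set $\Omega_0$ outside of which $f_\omega$ is $\beta$-H\"older continuous on every compact subset of $K=\supp\mu$, for every $\beta\in(0,\alpha)$. Since $\supp\mu$ is compact by hypothesis, for $\omega\notin\Omega_0$ the map $f_\omega$ is $\beta$-H\"older continuous on all of $\supp\mu$, for every $\beta<\alpha$. The second assertion of Proposition~\ref{prop:holder_upper} then gives
$$\dim_P\mu_{f_\omega}\leq\frac{1}{\alpha}\dim_P^{\alpha m}\mu\qquad\text{for every }\omega\notin\Omega_0 .$$
This is deterministic once $\omega$ is fixed; compactness of the support is the only place where the hypothesis of the theorem enters, since (H1) only controls compact subsets.

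\emph{Lower bound.} The family $\{f_\omega\}$ is jointly measurable and satisfies (H2) with $K=\supp\mu$. Hence Proposition~\ref{prop:random_lower} applies directly and yields
$$\dim_P\mu_{f_\omega}\geq\frac{1}{\alpha}\dim_P^{\alpha m}\mu\qquad\text{for }\mathbb{P}\text{-a.e.\ }\omega .$$

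Intersecting the two full-measure events gives the equality almost surely. I expect no genuine obstacle. The only points to check are:
\begin{enumerate}[(a)]
\item the H\"older maps in Proposition~\ref{prop:holder_upper} are only required to be defined on $\supp\mu$, which matches the domain $K=\supp\mu$ in Definition~\ref{def:random_holder};
\item the null set in (H1) does not depend on $\beta$, as the definition states.
\end{enumerate}
If one preferred a $\beta$-dependent exceptional set, one would restrict to a countable sequence $\beta_j\uparrow\alpha$, take the union of the corresponding null sets, and use Remark~\ref{rem:profiles}(b) when passing to the limit. This is exactly how the second assertion of Proposition~\ref{prop:holder_upper} is deduced.
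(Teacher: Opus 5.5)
Your proposal is correct and follows the paper's proof exactly: use (H1) together with the compactness of $\supp\mu$ and the second assertion of Proposition~\ref{prop:holder_upper} for the upper bound, then Proposition~\ref{prop:random_lower} for the reverse inequality almost surely. The additional checks you list, that the domains match and that the exceptional set can be taken independent of $\beta$, are consistent with the definitions and need nothing further.
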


\begin{proof}
By (H1), for $\mathbb{P}$-almost every $\omega$ the map $f_\omega$ is
$\beta$-H\"{o}lder continuous on the compact set $\supp\mu$ for every
$\beta\in(0,\alpha)$, and Proposition~\ref{prop:holder_upper} gives
$\dim_P\mu_{f_\omega}\leq\dim_P^{\alpha m}\mu/\alpha$. The reverse
inequality holds almost surely by Proposition~\ref{prop:random_lower}.
\end{proof}

By Examples~\ref{ex:fbm} and~\ref{ex:projections},
Theorem~\ref{thm:images} contains Xiao's formula \eqref{eq:intro_xiao}
and the projection theorem \eqref{eq:intro_FH} of Falconer and Howroyd
for compactly supported measures; for these two families the identity
holds for every finite Borel measure, by \cite[Theorem 3.1]{xiao1997packing}
and \cite[Theorem 6]{falconer1997packing}. Combining
Theorem~\ref{thm:images} with the results of Sections~\ref{sec:threshold}
and~\ref{sec:lower}, we obtain the following consequences. The first one
is the threshold characterization of full packing dimension for images.

\begin{corollary}\label{cor:images_threshold}
Let $\mu$ be a Borel probability measure on $\R^n$, $m\in\mathbb{N}$ and
$0<\alpha\leq1$.
\begin{enumerate}[(a)]
\item If $f:\supp\mu\to\R^m$ is $\beta$-H\"{o}lder continuous for every
$\beta\in(0,\alpha)$ and $\dim_P\mu_f=m$, then $\alpha m\leq\dim_{qH}\mu$.
\item If $\supp\mu$ is compact and $\{f_\omega\}$ is a random
$\alpha$-H\"{o}lder map from $\supp\mu$ to $\R^m$, then
$$\dim_P\mu_{f_\omega}=m\ \text{ almost surely}\quad\Longleftrightarrow\quad\alpha m\leq\dim_{qH}\mu ,$$
and if $\alpha m>\dim_{qH}\mu$, then $\dim_P\mu_{f_\omega}<m$ almost
surely.
\end{enumerate}
\end{corollary}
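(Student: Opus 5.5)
For part (a), I combine Proposition~\ref{prop:holder_upper} with the threshold characterization \eqref{eq:threshold} of Theorem~\ref{thm:threshold}. If $f$ is $\beta$-H\"{o}lder for every $\beta\in(0,\alpha)$ and $\dim_P\mu_f=m$, then Proposition~\ref{prop:holder_upper} gives
$$m=\dim_P\mu_f\leq\tfrac1\alpha\dim_P^{\alpha m}\mu.$$
Hence $\dim_P^{\alpha m}\mu\geq\alpha m$. By Remark~\ref{rem:profiles}(c) we also have $\dim_P^{\alpha m}\mu\leq\alpha m$, so $\dim_P^{\alpha m}\mu=\alpha m$.

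To apply \eqref{eq:threshold} I need $0<\alpha m\leq n$. Positivity is clear. For the upper bound, note that $\alpha m\leq\dim_P^{\alpha m}\mu\leq\dim_P\mu\leq n$ by Remark~\ref{rem:profiles}(c) and \eqref{eq:chain}. With $s=\alpha m$, \eqref{eq:threshold} then yields $\alpha m\leq\dim_{qH}\mu$.

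For part (b), Theorem~\ref{thm:images} gives $\dim_P\mu_{f_\omega}=\frac1\alpha\dim_P^{\alpha m}\mu$ almost surely, with the right-hand side deterministic. So the event $\{\dim_P\mu_{f_\omega}=m\}$ has probability one exactly when $\dim_P^{\alpha m}\mu=\alpha m$, and otherwise has probability zero.

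Suppose $\alpha m\leq\dim_{qH}\mu$. Since $\dim_{qH}\mu\leq n$, we have $\alpha m\in(0,n]$, and \eqref{eq:threshold} gives $\dim_P^{\alpha m}\mu=\alpha m$. Hence $\dim_P\mu_{f_\omega}=m$ almost surely.

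Conversely, suppose $\dim_P\mu_{f_\omega}=m$ almost surely. Then at least one such $\omega$ also satisfies (H1), and part (a) applied to $f_\omega$ gives $\alpha m\leq\dim_{qH}\mu$.

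Finally, suppose $\alpha m>\dim_{qH}\mu$. There are two cases.
\begin{itemize}
\item If $\alpha m\leq n$, then \eqref{eq:threshold} gives $\dim_P^{\alpha m}\mu<\alpha m$, so $\dim_P\mu_{f_\omega}<m$ almost surely.
\item If $\alpha m>n$, then Remark~\ref{rem:profiles}(c) gives $\dim_P^{\alpha m}\mu=\dim_P\mu\leq n<\alpha m$, and the same conclusion follows.
\end{itemize}
The only delicate point is this bookkeeping of the range $0<s\leq n$ in \eqref{eq:threshold}, which the two cases above handle.
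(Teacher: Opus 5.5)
Your proposal is correct and follows the paper's proof essentially line by line. Part (a) goes through Proposition~\ref{prop:holder_upper}, Remark~\ref{rem:profiles}(c) and \eqref{eq:threshold} with the same check $\alpha m\leq n$, and part (b) goes through Theorem~\ref{thm:images} with the same case split $\alpha m\leq n$ versus $\alpha m>n$. The only cosmetic difference is that you get the implication ``$\dim_P\mu_{f_\omega}=m$ a.s.\ $\Rightarrow$ $\alpha m\leq\dim_{qH}\mu$'' by applying (a) to a single $\omega$ in the (H1) event, whereas the paper reads it off from the strict inequality $\dim_P\mu_{f_\omega}<m$ a.s.\ when $\alpha m>\dim_{qH}\mu$; both are immediate.
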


\begin{proof}
(a) By Proposition~\ref{prop:holder_upper} and
Remark~\ref{rem:profiles}(c),
$m=\dim_P\mu_f\leq\dim_P^{\alpha m}\mu/\alpha\leq m$, so that
$\dim_P^{\alpha m}\mu=\alpha m$. Since $\dim_P^{\alpha m}\mu\leq\dim_P\mu\leq n$,
we have $\alpha m\leq n$, and \eqref{eq:threshold} gives
$\alpha m\leq\dim_{qH}\mu$.

(b) By Theorem~\ref{thm:images},
$\dim_P\mu_{f_\omega}=\dim_P^{\alpha m}\mu/\alpha$ almost surely. If
$\alpha m\leq\dim_{qH}\mu$, then $\alpha m\leq n$ and
\eqref{eq:threshold} gives $\dim_P^{\alpha m}\mu=\alpha m$, so that
$\dim_P\mu_{f_\omega}=m$ almost surely. If $\alpha m>\dim_{qH}\mu$, then
$\dim_P^{\alpha m}\mu<\alpha m$, by \eqref{eq:threshold} if $\alpha m\leq n$
and by Remark~\ref{rem:profiles}(c) if $\alpha m>n$; hence
$\dim_P\mu_{f_\omega}<m$ almost surely.
\end{proof}

In the complementary regime $\alpha m>\dim_{qH}\mu$, the bounds of
Sections~\ref{sec:threshold} and~\ref{sec:lower} give the following.

\begin{corollary}\label{cor:images_bounds}
Let $\mu$ be a Borel probability measure on $\R^n$ with compact support,
let $m\in\mathbb{N}$ and $0<\alpha\leq1$ with $\alpha m\leq n$, and let
$\{f_\omega\}$ be a random $\alpha$-H\"{o}lder map from $\supp\mu$ to
$\R^m$. Write $d_H=\dim_H\mu$, $P=\dim_P\mu$, $P_\theta=\dim_{P,\theta}\mu$
and $N=\dim_A(\supp\mu)$. Then, almost surely:
\begin{enumerate}[(a)]
\item for every $\theta\in(0,1]$,
$$\frac{1-\theta}{\alpha}\,\min\{\alpha m,P_\theta\}\ \leq\ \dim_P\mu_{f_\omega}\ \leq\ (1-\theta)\,m+\frac{\theta}{\alpha}\min\{\alpha m,P_\theta\};$$
\item $\dim_P\mu_{f_\omega}\geq\frac{1}{\alpha}\bigl(P-\max\{0,N-\alpha m\}\bigr)$;
in particular, $\dim_P\mu_{f_\omega}=P/\alpha$ whenever $\alpha m\geq N$;
\item if $d_H<\alpha m$, then $\dim_P\mu_{f_\omega}\geq\frac1\alpha\sup_{0<\theta\leq1}\Lambda_\theta$,
where $\Lambda_\theta$ is given by \eqref{eq:FMimproved} with $s=\alpha m$.
\end{enumerate}
\end{corollary}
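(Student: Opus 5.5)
The plan is to reduce everything to statements about the profile $\dim_P^{\alpha m}\mu$ through Theorem~\ref{thm:images}. By that theorem there is a full-probability event on which $\dim_P\mu_{f_\omega}=\frac1\alpha\dim_P^{\alpha m}\mu$. The right-hand side is deterministic, so on this single event every claimed inequality becomes a deterministic inequality for $\dim_P^{\alpha m}\mu$ divided by $\alpha$. In particular, no countable intersection over $\theta$ is needed. Throughout we set $s=\alpha m$, which satisfies $0<s\leq n$ by hypothesis. Since $\supp\mu$ is compact, $N_\mu=\dim_A(\supp\mu)=N$.

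For (a), apply \eqref{eq:two_sided} of Theorem~\ref{thm:threshold} with $s=\alpha m$ and divide by $\alpha$. The lower bound becomes $\frac{1-\theta}{\alpha}\min\{\alpha m,P_\theta\}$. The upper bound becomes $\frac{(1-\theta)\alpha m}{\alpha}+\frac\theta\alpha\min\{\alpha m,P_\theta\}=(1-\theta)m+\frac\theta\alpha\min\{\alpha m,P_\theta\}$. This holds for every $\theta\in(0,1]$ simultaneously, because the profile does not depend on $\theta$.

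For (b), apply Corollary~\ref{cor:assouad} with $s=\alpha m$ to get $\dim_P^{\alpha m}\mu\geq P-\max\{0,N-\alpha m\}$, and divide by $\alpha$. For the equality case $\alpha m\geq N$, the lower bound gives $P/\alpha$. Remark~\ref{rem:profiles}(c) gives $\dim_P^{\alpha m}\mu\leq P$, hence $\dim_P\mu_{f_\omega}\leq P/\alpha$. Together these give equality.

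For (c), the hypothesis $d_H<\alpha m\leq n$ is exactly the hypothesis of Theorem~\ref{thm:FMimproved} with $s=\alpha m$. That theorem gives $\dim_P^{\alpha m}\mu\geq\sup_\theta\Lambda_\theta$, where $\Lambda_\theta$ is computed with $N=N_\mu=\dim_A(\supp\mu)$. Dividing by $\alpha$ finishes.

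There is no real obstacle. The only point to check is that the identity of Theorem~\ref{thm:images} is used on one almost sure event, and that the hypotheses $\alpha m\leq n$ and compact support are what make $s$ admissible and $N_\mu=N$.
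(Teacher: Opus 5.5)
Your proposal is correct and follows essentially the same route as the paper: on the single almost sure event of Theorem~\ref{thm:images}, you apply Theorem~\ref{thm:threshold}, Corollary~\ref{cor:assouad} and Theorem~\ref{thm:FMimproved} with $s=\alpha m$, using $N_\mu=\dim_A(\supp\mu)$ by compactness. For the equality in (b) you use Remark~\ref{rem:profiles}(c), exactly as the paper does.
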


\begin{proof}
Apply Theorem~\ref{thm:threshold}, Corollary~\ref{cor:assouad} and
Theorem~\ref{thm:FMimproved} with $s=\alpha m$, together with
Theorem~\ref{thm:images}; for the equality in (b) recall that
$\dim_P^{\alpha m}\mu\leq P$ by Remark~\ref{rem:profiles}(c).
\end{proof}

In the extremal case $\dim_{qH}\mu=\dim_P\mu$, the identity of
Theorem~\ref{thm:images} reduces to the packing analogue of Kahane's
formula \eqref{eq:kahane}; moreover, for fractional Brownian motion this
is the only case in which such a formula holds.

\begin{corollary}\label{cor:images_kahane}
Let $\mu$ be a Borel probability measure on $\R^n$ with compact support.
\begin{enumerate}[(a)]
\item If $\dim_{qH}\mu=\dim_P\mu$ then for every $m\in\mathbb{N}$,
$0<\alpha\leq1$ and every random $\alpha$-H\"{o}lder map $\{f_\omega\}$
from $\supp\mu$ to $\R^m$,
$$\dim_P\mu_{f_\omega}=\min\Bigl\{m,\,\frac{\dim_P\mu}{\alpha}\Bigr\}\qquad\text{almost surely}.$$
\item Conversely, if for every $0<\alpha<1$ and $m\in\mathbb{N}$ an
index-$\alpha$ fractional Brownian motion $X:\R^n\to\R^m$ satisfies
$\dim_P\mu_X=\min\{m,\dim_P\mu/\alpha\}$ almost surely, then
$\dim_{qH}\mu=\dim_P\mu$.
\end{enumerate}
\end{corollary}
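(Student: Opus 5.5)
Part (a) follows by combining Theorem~\ref{thm:images} with Corollary~\ref{cor:kahane_profiles}. Since $\supp\mu$ is compact, Theorem~\ref{thm:images} gives, for any random $\alpha$-H\"older map, $\dim_P\mu_{f_\omega}=\alpha^{-1}\dim_P^{\alpha m}\mu$ almost surely. Under the hypothesis $\dim_{qH}\mu=\dim_P\mu$, Corollary~\ref{cor:kahane_profiles}(ii) yields $\dim_P^{s}\mu=\min\{s,\dim_P\mu\}$ for every $s>0$, including $s=\alpha m>n$ (that case is covered there through Remark~\ref{rem:profiles}(c)). Substituting $s=\alpha m$ gives $\alpha^{-1}\min\{\alpha m,\dim_P\mu\}=\min\{m,\dim_P\mu/\alpha\}$, which is the claim.

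For part (b), the plan is to verify condition (iii) of Corollary~\ref{cor:kahane_profiles}. If $P:=\dim_P\mu=0$ there is nothing to prove, so assume $P>0$. By Example~\ref{ex:fbm}, index-$\alpha$ fBm is a random $\alpha$-H\"older map, so Theorem~\ref{thm:images} (or directly Xiao's formula \eqref{eq:intro_xiao}) together with the hypothesis gives
\[
\dim_P^{\alpha m}\mu=\min\{\alpha m,P\}\quad\text{for all }0<\alpha<1,\ m\in\mathbb N.
\]

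The only obstacle is that $s=P$ itself need not be of the form $\alpha m$ with $\alpha<1$, since $P$ may be an integer. To handle this, use the monotonicity of $s\mapsto\dim_P^s\mu$ from Remark~\ref{rem:profiles}(b). Choose an integer $m>P$ and let $\alpha\uparrow P/m$ through values in $(0,1)$; then $s=\alpha m\uparrow P$ with $s<P$, and each such $s$ satisfies $\dim_P^{s}\mu=s$. Hence $\dim_P^P\mu\ge\sup_{s<P}\dim_P^s\mu=P$, and combined with Remark~\ref{rem:profiles}(c) this gives $\dim_P^P\mu=P$.

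Alternatively, $\dim_P^s\mu=s$ for all $s<P$ already implies $s\le\dim_{qH}\mu$ for all $s<P$ by \eqref{eq:threshold}, since $s\le n$. Therefore $\dim_{qH}\mu\ge P$, and the reverse inequality $\dim_{qH}\mu\le P$ holds by Remark~\ref{rem:qH}(a). This second route avoids the endpoint $s=P$ entirely.
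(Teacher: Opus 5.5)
Your proof is correct and uses the same ingredients as the paper (Theorem~\ref{thm:images} together with the threshold \eqref{eq:threshold}), and your part (a) is identical to the paper's argument. In (b), however, the endpoint obstacle you describe is not there: for any integer $m>P$ the exponent $\alpha=P/m$ already lies in $(0,1)$, and the paper takes exactly this $\alpha$, so that $\dim_P\mu_X=m$ almost surely and Corollary~\ref{cor:images_threshold}(b) gives $P=\alpha m\leq\dim_{qH}\mu$ directly, which makes your limiting argument unnecessary (though harmless).
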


\begin{proof}
(a) This follows from Theorem~\ref{thm:images} and
Corollary~\ref{cor:kahane_profiles} with $s=\alpha m$.

(b) If $\dim_P\mu=0$ there is nothing to prove. Otherwise choose
$m\in\mathbb{N}$ with $m>\dim_P\mu$ and put $\alpha:=\dim_P\mu/m\in(0,1)$.
Then $\dim_P\mu_X=m$ almost surely, and by Example~\ref{ex:fbm} and
Corollary~\ref{cor:images_threshold}(b),
$\dim_P\mu=\alpha m\leq\dim_{qH}\mu\leq\dim_P\mu$.
\end{proof}

For orthogonal projections, and also for fractional Brownian motion, the
assumption that $\supp\mu$ is compact can be removed, since in these
cases the identity of Theorem~\ref{thm:images} holds for every finite
Borel measure by \cite[Theorem 6]{falconer1997packing} and
\cite[Theorem 3.1]{xiao1997packing}, while Theorems~\ref{thm:threshold}
and~\ref{thm:FMimproved} and Corollaries~\ref{cor:kahane_profiles}
and~\ref{cor:assouad} hold for every Borel probability measure. For
projections we obtain the following statement.

\begin{corollary}\label{cor:projections}
Let $\mu$ be a Borel probability measure on $\R^n$ and $1\leq m\leq n$.
Write $d_H=\dim_H\mu$, $P=\dim_P\mu$, $P_\theta=\dim_{P,\theta}\mu$ and
$N=N_\mu$. Then:
\begin{enumerate}[(a)]
\item $\dim_P\mu_V=m$ for $\gamma_{n,m}$-almost every $V\in G(n,m)$ if
and only if $m\leq\dim_{qH}\mu$; otherwise $\dim_P\mu_V<m$ for
$\gamma_{n,m}$-almost every $V$.
\item For $\gamma_{n,m}$-almost every $V\in G(n,m)$ and every
$\theta\in(0,1]$,
$$(1-\theta)\min\{m,P_\theta\}\leq\dim_P\mu_V\leq(1-\theta)\,m+\theta\min\{m,P_\theta\}.$$
\item For $\gamma_{n,m}$-almost every $V\in G(n,m)$,
$\dim_P\mu_V\geq P-\max\{0,N-m\}$; in particular, if $\supp\mu$ is
compact and $m\geq\dim_A(\supp\mu)$, then $\dim_P\mu_V=\dim_P\mu$ for
$\gamma_{n,m}$-almost every $V$.
\item If $d_H<m$, then for $\gamma_{n,m}$-almost every $V\in G(n,m)$,
$$\dim_P\mu_V\ \geq\ \sup_{0<\theta\leq1}\min\Big\{P_\theta,\ \frac{m\,P_\theta\,(N-d_H)}{P_\theta(N-d_H)+\theta(m-d_H)(N-P_\theta)}\Big\},$$
and this bound is at least \eqref{eq:intro_FM}, with the strict
improvements described in Proposition~\ref{prop:theta1} and
Corollary~\ref{cor:floor_improvement}.
\item If $\dim_{qH}\mu=\dim_P\mu$, then
$\dim_P\mu_V=\min\{m,\dim_P\mu\}$ for $\gamma_{n,m}$-almost every
$V\in G(n,m)$.
\end{enumerate}
\end{corollary}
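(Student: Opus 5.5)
The plan is to reduce every item to the corresponding statement about the profile $\dim_P^m\mu$, through the Falconer--Howroyd identity \eqref{eq:intro_FH}. By \cite[Theorem 6]{falconer1997packing}, for every finite Borel measure (compact support is not needed), $\dim_P\mu_V=\dim_P^m\mu$ for $\gamma_{n,m}$-almost every $V\in G(n,m)$. Equivalently, for compactly supported $\mu$ one can apply Theorem~\ref{thm:images} to the random $1$-H\"older map of Example~\ref{ex:projections}. We fix once and for all a $\gamma_{n,m}$-full set $\mathcal{G}\subset G(n,m)$ on which this identity holds. Since the right-hand side $\dim_P^m\mu$ is deterministic, each item becomes a deterministic inequality for $\dim_P^m\mu$ with $s=m\in(0,n]$. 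In particular no further null sets need to be removed, and the quantifier ``for every $\theta$'' in (b) is harmless.

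The items then follow one by one.
\begin{enumerate}[(a)]
\item Apply \eqref{eq:threshold} with $s=m$. If $m\leq\dim_{qH}\mu$, then $\dim_P^m\mu=m$, so $\dim_P\mu_V=m$ on $\mathcal{G}$. Otherwise $\dim_P^m\mu<m$, so $\dim_P\mu_V<m$ on $\mathcal{G}$, which also gives the ``only if'' direction.
\item This is exactly \eqref{eq:two_sided} with $s=m$, transported to $\mathcal{G}$.
\item This is Corollary~\ref{cor:assouad} with $s=m$. If $\supp\mu$ is compact and $m\geq\dim_A(\supp\mu)=N$, the bound reads $\dim_P^m\mu\geq P$. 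Combined with Remark~\ref{rem:profiles}(c), this gives $\dim_P^m\mu=P$.
\item This is Theorem~\ref{thm:FMimproved} with $s=m$ and $d_H<m$. The comparison with \eqref{eq:intro_FM} uses $\sup_\theta\Lambda_\theta\geq\Lambda_1\geq\alpha_{FM}(m)$ from Proposition~\ref{prop:theta1}(b). Here $\alpha_{FM}(m)$ is precisely the first case of \eqref{eq:intro_FM}, which applies because $d_H<m$. The strict improvements are those of Proposition~\ref{prop:theta1} and Corollary~\ref{cor:floor_improvement}, and these hold when their hypotheses $d_H<P$, $m<n$ are met. If $m=n$, then $\dim_P\mu_V=P$ trivially.
\item This follows from Corollary~\ref{cor:kahane_profiles}(ii) with $s=m$.
\end{enumerate}

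The only step needing care is the passage to measures with noncompact support. There I must make sure that $\mathcal{G}$ comes from \cite[Theorem 6]{falconer1997packing}, which holds for all finite Borel measures, rather than from Theorem~\ref{thm:images}. I also need $N=N_\mu=n$ in that case, which is built into Corollary~\ref{cor:assouad} and Theorem~\ref{thm:FMimproved}. Everything else is direct substitution.
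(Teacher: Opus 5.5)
Your proposal is correct and matches the paper's proof. Both reduce every item to $\dim_P^m\mu$ via \eqref{eq:intro_FH}, which holds for all finite Borel measures by \cite[Theorem 6]{falconer1997packing}, and then apply Theorem~\ref{thm:threshold}, Corollary~\ref{cor:assouad}, Theorem~\ref{thm:FMimproved}, Corollary~\ref{cor:kahane_profiles} and Remark~\ref{rem:profiles}(c) with $s=m$.

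One small loose end is the comparison in (d) when $m=n$ or $d_H=P$, which Proposition~\ref{prop:theta1} does not cover. There you should check directly that $\Lambda_1=\alpha_{FM}(m)$: both equal $P$ when $m=n$, and both equal $d_H$ when $d_H=P$. Observing that $\dim_P\mu_V=P$ does not settle this, because the claim concerns the bound itself.
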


\begin{proof}
By \eqref{eq:intro_FH}, $\dim_P\mu_V=\dim_P^m\mu$ for
$\gamma_{n,m}$-almost every $V$. Hence (a), (b), (c), (d) and (e) follow
from Theorem~\ref{thm:threshold}, Corollary~\ref{cor:assouad},
Theorem~\ref{thm:FMimproved} and Corollary~\ref{cor:kahane_profiles}
with $s=m$, together with Remark~\ref{rem:profiles}(c).
\end{proof}

\section{Examples}\label{sec:ex}
To illustrate the broader applications of the results of the previous section, in the next example we show that $\theta\mapsto\dim_{P,\theta}\mu$ can be
constant, and hence identically equal to $\dim_P\mu$, also when
$\dim_H\mu<\dim_P\mu<\dim_A(\operatorname{supp}\mu)$, recall remark
\ref{rem:qH}(b).

\begin{example}\label{ex:constant}

For every $\beta\in(0,1)$ there exists a Borel probability measure $\mu$ with compact support in $\mathbb{R}$ such that

\begin{gather*}
  \dim_H\mu=0,\qquad \dim_A\mu=\dim_A \operatorname{supp}\mu=1,\\
  \dim_{P,\theta}\mu=\dim_P\mu=\beta\quad\text{for every }\theta\in(0,1].
\end{gather*}
In particular $\dim_{qH}\mu=\dim_P\mu$, although
$\dim_H\mu<\dim_P\mu<\dim_A E$. 
\end{example}
\begin{proof}
    
Let $\beta\in(0,1)$. For integers $u<v$ we write
$(u,v]=\{k\in\mathbb{N}:u<k\leq v\}$, and we let
\[
  S_\beta=\{k\in\mathbb{N}:\ \lfloor\beta k\rfloor>\lfloor\beta(k-1)\rfloor\},
\]
so that
$\#\bigl(S_\beta\cap(u,v]\bigr)=\lfloor\beta v\rfloor-\lfloor\beta u\rfloor$
for all integers $0\leq u<v$. Let $a_1=1$ and define recursively, for $j\geq 1$,
\[
  b_j=(j+1)\,a_j,\qquad
  c_j=\Bigl\lceil\frac{b_j}{1-\beta}\Bigr\rceil,\qquad
  a_{j+1}=(j+1)\,c_j .
\]
Then $a_j<b_j<c_j<a_{j+1}$, $a_j/b_j\to 0$ and $c_j/a_{j+1}\to 0$. Let
\[
  F=\bigcup_{j\geq 1}\Bigl((b_j,c_j]\ \cup\ \bigl((c_j,a_{j+1}]\cap S_\beta\bigr)\Bigr),
\]
let $(\varepsilon_k)_{k\geq 1}$ be independent random variables such that
$\varepsilon_k$ is uniformly distributed on $\{0,1\}$ if $k\in F$ and
$\varepsilon_k=0$ if $k\notin F$, and let $\mu$ be the distribution of
$X=\sum_{k\geq 1}\varepsilon_k 2^{-k}$. Thus the binary digits of $X$ vanish
on $[1,a_1]$ and on the blocks $(a_j,b_j]$, they are free on the blocks
$(b_j,c_j]$, and on the blocks $(c_j,a_{j+1}]$ they are free exactly at the
indices in $S_\beta$.

Let $\kappa(m)=\#(F\cap[1,m])$ be the number of free digits among the first
$m$, and let $\mathcal{D}_m$ be the family of dyadic intervals
$[i2^{-m},(i+1)2^{-m})$, $i\in\mathbb{Z}$.\\
Since
$\mathbb{N}\setminus F$ is infinite, every realization of
$(\varepsilon_k)$ has infinitely many zeros, so it is the binary expansion
of $X$ that does not end in an infinite string of ones. Hence, if
$0\leq i<2^m$ and $i=\sum_{k=1}^m d_k2^{m-k}$ with $d_k\in\{0,1\}$, then
$X\in[i2^{-m},(i+1)2^{-m})$ if and only if $\varepsilon_k=d_k$ for
$k=1,\dots,m$, and by independence
\[
  \mu\bigl([i2^{-m},(i+1)2^{-m})\bigr)=\prod_{k=1}^{m}\mathbb{P}(\varepsilon_k=d_k).
\]
Consequently $\mu(I)\in\{0,2^{-\kappa(m)}\}$ for every $I\in\mathcal{D}_m$.
Let $K=\{\sum_{k\in F}d_k2^{-k}: d_k\in\{0,1\}\}$, then $K$ is a compact set with
$\mu(K)=1$. If $x\in K$ and $I_m(x)$ denotes the interval of $\mathcal{D}_m$
containing $x$, then $\mu(I_m(x))=2^{-\kappa(m)}$. If
$2^{-m-1}\leq r<2^{-m}$, then $I_{m+1}(x)\subset B(x,r)$, and since $B(x,r)$
meets at most three intervals of $\mathcal{D}_m$ and 
$\kappa(m+1)\leq\kappa(m)+1$, we obtain
\begin{equation}\label{eq:exb-balls}
  2^{-\kappa(m)-1}\leq\mu(B(x,r))\leq 3\cdot 2^{-\kappa(m)}
  \qquad (x\in K,\ 2^{-m-1}\leq r<2^{-m}).
\end{equation}

 By construction,
\[
  \kappa(m)=
  \begin{cases}
    \kappa(a_j), & a_j\leq m\leq b_j,\\[2pt]
    \kappa(a_j)+m-b_j, & b_j\leq m\leq c_j,\\[2pt]
    \kappa(c_j)+\lfloor\beta m\rfloor-\lfloor\beta c_j\rfloor,
      & c_j\leq m\leq a_{j+1}.
  \end{cases}
\]
Since $b_j\leq(1-\beta)c_j<b_j+1$, we have
$\kappa(c_j)\geq c_j-b_j\geq\beta c_j$, and therefore
\begin{equation}\label{eq:exb-lower}
  \kappa(m)\geq\beta m-1\qquad (c_j\leq m\leq a_{j+1}).
\end{equation}
In the other direction, since $\kappa(a_j)\leq a_j$ and
$(1-\beta)m\leq(1-\beta)c_j<b_j+1$ for $m\leq c_j$, the last two lines of
the formula above give $\kappa(m)\leq\beta m+a_j+2$ for
$b_j<m\leq a_{j+1}$, by using that $\kappa(c_j)=\kappa(a_j)+c_j-b_j$. As $\kappa$ is constant on $[a_{j+1},b_{j+1}]$, then $\kappa(m)/m$ is at most $\kappa(a_{j+1})/a_{j+1}$ there, and it
follows that
\begin{equation}\label{eq:exb-upper}
  \frac{\kappa(m)}{m}\leq\beta+\eta_j\quad\text{for all } m>b_j,
  \qquad\text{where }\
  \eta_j=\sup_{i\geq j}\frac{a_i+2}{b_i}\xrightarrow[j\to\infty]{}0 .
\end{equation}
Finally, $\kappa(b_j)=\kappa(a_j)\leq a_j$, and $(b_j,c_j]\subset F$ with
$c_j-b_j\geq\beta c_j\to\infty$.

Now, we will prove that $\dim_H\mu=0$ and $\dim_P\mu=\beta$.

Since $E:=\operatorname{supp}\mu$ is covered
by the $2^{\kappa(b_j)}$ intervals of $\mathcal{D}_{b_j}$ with positive
measure. Hence, for every $s>0$,
\[
  \mathcal{H}^s_{2^{-b_j}}(E)\leq 2^{\kappa(b_j)-sb_j}
  \leq 2^{-b_j(s-a_j/b_j)}\xrightarrow[j\to\infty]{}0,
\]
so $\dim_H E=0$ and therefore $\dim_H\mu=0$. Now let $x\in E$. If
$t<\beta$, then \eqref{eq:exb-balls} with $r=2^{-a_{j+1}}$ (that is,
$m=a_{j+1}-1$) and \eqref{eq:exb-lower} give
\[
  r^{-t}\mu(B(x,r))\leq 3\cdot 2^{\beta+1}\,2^{-(\beta-t)a_{j+1}}
  \xrightarrow[j\to\infty]{}0 .
\]
If $t>\beta$, choose $j$ with $\eta_j<t-\beta$; then \eqref{eq:exb-balls}
and \eqref{eq:exb-upper} give

$$r^{-t}\mu(B(x,r))\geq 2^{tm-\kappa(m)-1}\geq 2^{(t-\beta-\eta_j)m-1}$$

whenever $2^{-m-1}\leq r<2^{-m}$ and $m>b_j$, and the right-hand side tends
to infinity as $r\to 0$. Since this holds at every point of $E$ we have $\dim_P\mu=\beta$.\\

Now we will show that  $\dim_{P,\theta}\mu\geq\beta$ for every $\theta\in(0,1]$.
Fix $\theta\in(0,1]$ and $s<\beta$, and let
$\delta_j=2^{-c_j-1}$. If $r\in[\delta_j^{1/\theta},\delta_j]$ and
$2^{-m-1}\leq r<2^{-m}$, then
\[
  c_j\leq m<\frac{c_j+1}{\theta}\leq a_{j+1}
\]
for all sufficiently large $j$, because $c_j/a_{j+1}\to 0$. Hence, by
\eqref{eq:exb-balls} and \eqref{eq:exb-lower}, for all $x\in E$ and
$r\in[\delta_j^{1/\theta},\delta_j]$ we have
\[
  r^{-s}\mu(B(x,r))\leq 2^{s(m+1)}\cdot 3\cdot 2^{-\beta m+1}
  \leq 12\cdot 2^{-(\beta-s)c_j}.
\]
Given $\epsilon>0$ and $\delta_0>0$, taking $\delta=\delta_j$ with $j$ large
enough we obtain
$\dim_{P,\theta}\mu\geq s$ for every $s<\beta$, that is,
$\dim_{P,\theta}\mu\geq\beta$.

Finally we will show that $\dim_A\mu=\dim_A E=1$. Let $x\in E$, $R=2^{-b_j}$ and
$r=2^{-c_j}$. The interval $I_{b_j}(x)$ is contained in $B(x,R)$, and, since
$(b_j,c_j]\subset F$, each of the $2^{c_j-b_j}$ intervals of
$\mathcal{D}_{c_j}$ contained in $I_{b_j}(x)$ has positive measure and hence
meets $E$. Since every ball of radius $r$ meets at most three of them,
\[
  N_r\bigl(E\cap B(x,R)\bigr)\geq\tfrac13\,2^{c_j-b_j}=\tfrac13\,\frac{R}{r},
\]
where $N_r(A)$ is the least number of balls of radius $r$ needed to cover
$A$. Similarly, \eqref{eq:exb-balls} and $b_j\notin F$ give
\[
  \frac{\mu(B(x,R))}{\mu(B(x,r))}
  \geq\frac{2^{\kappa(c_j-1)-\kappa(b_j-1)-1}}{3}
  =\frac{2^{c_j-b_j-2}}{3}=\frac{1}{12}\,\frac{R}{r}.
\]
As $c_j-b_j\to\infty$, it follows that $\dim_A E\geq 1$ and
$\dim_A\mu\geq 1$. The inequality $\dim_A E\leq 1$ is trivial, and
\eqref{eq:exb-balls} also gives $\mu(B(x,R))\leq 12\,(R/r)\,\mu(B(x,r))$ for
all $x\in E$ and $0<r<R<1$, so $\dim_A\mu\leq 1$.
\end{proof}

\begin{remark}
\begin{enumerate}[(a)]
    \item The property of the scales responsible for $\dim_{P,\theta}\mu=\dim_P\mu$ in
Example~\ref{ex:constant} is $c_j/a_{j+1}\to 0$: the blocks
$(c_j,a_{j+1}]$, on which the free digits have density $\beta$, are long at
a logarithmic scale, so that for every $\theta$ there are arbitrarily small
windows $[\delta^{1/\theta},\delta]$ made of scales $2^{-m}$ with $m$ in one
of these blocks. The blocks $(b_j,c_j]$, in contrast, have bounded
logarithmic length, since $c_j/b_j\to 1/(1-\beta)$; this is why they force
$\dim_A\mu=1$ without increasing $\dim_P\mu$.
\item Example \ref{ex:constant} also shows that $\underline{\dim}_B E=0$. 
\end{enumerate}
\end{remark}

In the next example we show that Example~\ref{ex:constant} has a
counterpart in $\mathbb{R}^n$, so that it can be used to test projection
theorems. For the measure $\nu$ of this example, the lower bound of
Theorem~\ref{thm:FMimproved} gives the exact value
$\dim_P\nu_V=\min\{\dim_P\nu,m\}$ for almost every $V\in G(n,m)$, whereas
the lower bound of Falconer and Mattila
\cite[Theorem~3.3]{falconer1996packing} is strictly smaller.

\begin{example}\label{ex:constant-Rn}
Let $n\geq2$, let $\beta\in(0,1)$, let $\mu$ be the measure of
Example~\ref{ex:constant}, and let $\nu=\mu^{\otimes n}$ be the product of
$n$ copies of $\mu$, that is, the distribution of $(X_1,\dots,X_n)$, where
$X_1,\dots,X_n$ are independent copies of the random number $X$ of
Example~\ref{ex:constant}. Then $\supp\nu=E^n$, where $E=\supp\mu$, and
\begin{gather*}
\dim_H\nu=0,\qquad \dim_A\nu=\dim_A(E^n)=n,\\
\dim_{P,\theta}\nu=\dim_P\nu=n\beta\quad\text{for every }\theta\in(0,1].
\end{gather*}
In particular $\dim_{qH}\nu=\dim_P\nu$, although $\dim_H\nu<\dim_P\nu<\dim_A(\supp\nu)$.

Indeed, let $\kappa$ be the counting function defined in the verification
of Example~\ref{ex:constant}, let $x=(x_1,\dots,x_n)$ be a point of $E^n$,
and let $2^{-\ell-1}\leq r<2^{-\ell}$. Since
\begin{gather*}
B(x,r)\subset\prod_{i=1}^nB(x_i,r)\subset B(x,\sqrt n\,r),
\end{gather*}
we have 
\[
c^{-1}2^{-n\kappa(\ell)}\leq\nu(B(x,r))\leq c\,2^{-n\kappa(\ell)},
\]
where $c\geq1$ depends only on $n$. This is \eqref{eq:exb-balls} with
$\kappa$ replaced by $n\kappa$, and the rest of the verification of
Example~\ref{ex:constant} applies with dyadic cubes in place of dyadic
intervals; only the constants change.

Now let $1\leq m<n$ be an integer. For every $V\in G(n,m)$ we have
$\dim_P\nu_V\leq\min\{\dim_P\nu,m\}$. Since $\dim_{P,\theta}\nu=\dim_P\nu$
for every $\theta\in(0,1]$ we have the reverse
inequality for $\gamma_{n,m}$-almost every $V$. Therefore, for
$\gamma_{n,m}$-almost every $V\in G(n,m)$,
\begin{equation}\label{eq:constant-Rn-proj}
\dim_P\nu_V=\min\{\dim_P\nu,m\}=\min\{n\beta,m\}.
\end{equation}
This is the exact analogue, for packing dimension, of the Hausdorff dimension projection
formula \eqref{eq:intro_HT}. On the other hand, since
$\dim_H\nu=0$, the lower bound of Falconer and Mattila
\cite[Theorem~3.3]{falconer1996packing} reduces to
\[
\dim_P\nu_V\geq\frac{\dim_P\nu}{1+\bigl(\frac1m-\frac1n\bigr)\dim_P\nu}
=\frac{mn\beta}{m+(n-m)\beta},
\]
and the right-hand side is strictly smaller than $\min\{n\beta,m\}$. Hence, for $\nu$, Theorem~\ref{thm:FMimproved} gives the exact
value of $\dim_P\nu_V$, while the bound of Falconer and Mattila does not.
This does not contradict the sharpness of their bound, which is attained
by the measures of Example~\ref{ex:FM}; the difference is that for those measures
$\dim_{P,\theta}\mu$ decreases to $\dim_H\mu$ as $\theta\to0$.
\end{example}

In the next example we show that the universal lower bound of Lemma~\ref{lem:floor} cannot be improved in
general. The following example shows that it is attained by the
measures constructed by Falconer and Mattila to show the sharpness of
their projection theorem \eqref{eq:intro_FM}.\\

\begin{example}\label{ex:FM}
Let $0<d<p<n$ and let $\mu$ be the measure of
\cite[Example 5.1]{falconer1996packing}, for which $\dim_H\mu=d$ and
$\dim_P\mu=p$. We claim that
\begin{equation}\label{eq:example}
\dim_{P,\theta}\mu=\max\Big\{d,\ \frac{\theta np}{n-(1-\theta)p}\Big\}\qquad\text{for every }\theta\in(0,1],
\end{equation}
so that $\dim_{qH}\mu=\dim_H\mu=d<\dim_P\mu$, and the lower bound
\eqref{eq:floor}, with $N=n$, is attained for every $\theta$ for which it
is at least $d$.

We verify \eqref{eq:example}. Let $F=\supp\mu\subset[0,1]^n$, let
$r_k<R_k$ be the side lengths of the construction, with $R_k^p=r_k^d$, and
let $m_k$ be the integers defined in \cite[(5.1)]{falconer1996packing}. The
proof of \cite[Example 5.1(a)]{falconer1996packing} shows that there are
constants $c\geq1$ and $k_0$ such that, for $x\in F$ and $k\geq k_0$,
\begin{align}
c^{-1}r_k^d\leq\mu(B(x,u))&\leq c\,r_k^d
  &&\text{for } \sqrt n\,r_k\leq u\leq R_k,\label{eq:FM-plateau}\\
c^{-1}u^nR_{k+1}^{p-n}\leq\mu(B(x,u))&\leq c\,u^nR_{k+1}^{p-n}
  &&\text{for } R_{k+1}<u\leq\sqrt n\,r_k.\label{eq:FM-ramp}
\end{align}
That is, $\mu(B(x,u))$ is essentially constant for
$\sqrt n\,r_k\leq u\leq R_k$, and it grows like $u^n$ for
$R_{k+1}<u\leq\sqrt n\,r_k$. Moreover, by \cite[(5.2)]{falconer1996packing}
and $R_k^p=r_k^d$ we have $R_{k+1}^p=r_k^d/m_k$, while
$m_k>r_k^{-n(p-d)/(n-p)}$ by \cite[(5.1)]{falconer1996packing}; hence
\begin{equation}\label{eq:FM-scales}
R_{k+1}<r_k^{\frac{n-d}{n-p}}\qquad\text{for every } k.
\end{equation}

\emph{Upper bound.} Since $\dim_{P,\theta}\mu\leq\dim_P\mu=p$, we may assume
that $\theta<1$, so that $\max\{d,\theta np/(n-(1-\theta)p)\}<p$. Fix $e$
with $\max\{d,\theta np/(n-(1-\theta)p)\}<e<p$ and a constant $C\geq1$. Then
$\frac{n-p}{n-e}<1<\frac pe$, and
\begin{equation}\label{eq:FM-exponents}
\frac{n-d}{n-e}>1,\qquad \frac pd>\frac pe,\qquad
\frac{n-p}{n-e}>\frac{\theta p}{e};
\end{equation}
the first two inequalities hold because $e>d$, and the third one is
equivalent to $e>\theta np/(n-(1-\theta)p)$. For $k\geq k_0$ let
\[
I_k=\Big[(cC)^{\frac1{n-e}}R_{k+1}^{\frac{n-p}{n-e}},\ (cC)^{-\frac1e}R_k^{\frac pe}\Big].
\]
We claim that, for every $x\in F$ and every sufficiently large $k$,
\begin{equation}\label{eq:FM-bad}
\mu(B(x,u))\geq C\,u^e\qquad\text{for all } u\in I_k .
\end{equation}
Indeed, by \eqref{eq:FM-scales} and the first two inequalities in
\eqref{eq:FM-exponents}, for $k$ large we have
\[
(cC)^{\frac1{n-e}}R_{k+1}^{\frac{n-p}{n-e}}
\leq(cC)^{\frac1{n-e}}r_k^{\frac{n-d}{n-e}}\leq\sqrt n\,r_k
=\sqrt n\,R_k^{\frac pd}\leq(cC)^{-\frac1e}R_k^{\frac pe}\leq R_k .
\]
If $u\in I_k$ and $u\geq\sqrt n\,r_k$, then \eqref{eq:FM-plateau} gives
$\mu(B(x,u))\geq c^{-1}r_k^d=c^{-1}R_k^p\geq Cu^e$. If $u\in I_k$ and
$u\leq\sqrt n\,r_k$, then $u>R_{k+1}$, because $\frac{n-p}{n-e}<1$ and
$cC\geq1$, so \eqref{eq:FM-ramp} gives
\[
\mu(B(x,u))\geq c^{-1}u^e\,u^{n-e}R_{k+1}^{p-n}
\geq c^{-1}u^e\,(cC)R_{k+1}^{n-p}R_{k+1}^{p-n}=C\,u^e .
\]
This proves \eqref{eq:FM-bad}. On the other hand, the third inequality in
\eqref{eq:FM-exponents} gives, for $k$ large,
\begin{equation}\label{eq:FM-gap}
\Big((cC)^{-\frac1e}R_{k+1}^{\frac pe}\Big)^{\theta}
\geq(cC)^{\frac1{n-e}}R_{k+1}^{\frac{n-p}{n-e}} .
\end{equation}
Fix $k_1\geq k_0$ such that \eqref{eq:FM-bad} and \eqref{eq:FM-gap} hold for
all $k\geq k_1$, let $x\in F$ and let
$0<\rho<(cC)^{-1/e}R_{k_1}^{p/e}$. Choose $k\geq k_1$ such that
$(cC)^{-1/e}R_{k+1}^{p/e}<\rho\leq(cC)^{-1/e}R_k^{p/e}$. If $\rho\in I_k$,
we take $u=\rho$. Otherwise $\rho<u$, where
$u=(cC)^{1/(n-e)}R_{k+1}^{(n-p)/(n-e)}$ is the left endpoint of $I_k$, and
by \eqref{eq:FM-gap},
\[
\rho<u\leq\Big((cC)^{-\frac1e}R_{k+1}^{\frac pe}\Big)^{\theta}<\rho^\theta .
\]
In both cases $[\rho,\rho^\theta]$ contains a scale $u\in I_k$, so that
$\mu(B(x,u))\geq Cu^e$ by \eqref{eq:FM-bad}. Since this holds for every
$x\in F$, every sufficiently small $\rho$ and every $C\geq1$, the window
condition of Theorem~\ref{thm:windows} with exponent $e$ fails at every
point of $F$, and therefore $\dim_{P,\theta}\mu\leq e$. Letting $e$ decrease
to $\max\{d,\theta np/(n-(1-\theta)p)\}$ gives the upper bound in
\eqref{eq:example}.

\emph{Lower bound.} It follows from Lemmas~\ref{lem:compare} and
\ref{lem:floor}, since $\sigma_\theta(p)$ is nonincreasing as a function of
$N$ and $N_\mu\leq n$.

In particular, $\dim_A(\supp\mu)=n$. Indeed, if $N=\dim_A(\supp\mu)$ were
smaller than $n$, Lemma~\ref{lem:floor} would give, for $\theta<1$ close
to $1$,
\[
\dim_{P,\theta}\mu\geq\frac{\theta Np}{N-(1-\theta)p}
>\frac{\theta np}{n-(1-\theta)p}=\dim_{P,\theta}\mu .
\]
\end{example}

\section{Analytic sets}\label{sec:sets}

In this final section we obtain the analogues of the results above for
analytic sets. The corresponding characterization for the intermediate
and box dimensions of images of compact sets was obtained in
\cite[Corollary 3.5]{angelini2025critical}. There it is shown that, for an
index-$\alpha$ fractional Brownian motion $X:\R^n\to\R^m$ and a compact
set $E\subset\R^n$, $\underline{\dim}_\theta X(E)=m$ almost surely if and
only if $\alpha m\leq\underline{\dim}_{qH}E$, where
$\underline{\dim}_{qH}E=\lim_{\theta\to0}\underline{\dim}_\theta E$ is
the lower quasi-Hausdorff dimension of $E$, and similarly for the upper
intermediate dimensions. Theorem~\ref{thm:sets_images} below is the
packing dimension counterpart of that result. The passage from measures
to sets rests on a characterization, due to Xiao, of the packing
dimension of the image of a set in terms of the packing dimensions of
the images of the measures supported on it.

\begin{lemma}[{\cite[Lemma 4.3]{xiao1997packing}}]\label{lem:xiao_sets}
Let $E\subset\R^n$ be an analytic set. Then, for any continuous function
$f:\R^n\to\R^m$,
$$\dim_P f(E)=\sup\{\dim_P\mu_f\,:\,\mu\in\mathcal{M}_c^+(E)\}.$$
\end{lemma}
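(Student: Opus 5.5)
We prove the two inequalities separately, using \eqref{eq:dimP_sets} for the image set $f(E)$ together with a lifting of measures on $f(E)$ back to measures on $E$.

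\emph{The inequality $\geq$.} Let $\mu\in\mathcal{M}_c^+(E)$ and $K=\supp\mu\subset E$, which is compact. Since $f$ is continuous, $f(K)$ is compact and contained in $f(E)$. The image measure $\mu_f$ is a finite Borel measure with $\mu_f(f(K))=\mu(K)>0$, so it gives full mass to $f(K)$. From the definition of $\dim_P\mu_f$ as an infimum over sets of positive measure we get
$$\dim_P\mu_f\leq\dim_P f(K)\leq\dim_P f(E),$$
where the last step is monotonicity of packing dimension. Taking the supremum over $\mu$ gives $\geq$.

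\emph{The inequality $\leq$.} Since $E$ is analytic and $f$ is continuous, $f(E)$ is analytic in $\R^m$. By \eqref{eq:dimP_sets} applied in $\R^m$, for every $s<\dim_P f(E)$ there is $\nu\in\mathcal{M}_c^+(f(E))$ with $\dim_P\nu>s$. Let $L=\supp\nu\subset f(E)$, a compact set. Then $A:=E\cap f^{-1}(L)$ is analytic and $f(A)=L$.

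The next step is to find a compact set $K\subset A$ whose image carries most of the mass of $\nu$. For this we use the measurable section (Jankov--von Neumann) theorem: there is a $\nu$-measurable map $g:L\to A$ with $f\circ g=\mathrm{id}$ on $L$. By Lusin's theorem we may restrict $g$ to a compact set $L'\subset L$ with $\nu(L')>0$ on which $g$ is continuous. Then $K:=g(L')$ is a compact subset of $E$, and $f|_K$ is injective with continuous inverse $g$.

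Now set $\mu:=g_*(\nu|_{L'})\in\mathcal{M}_c^+(E)$. Since $f\circ g=\mathrm{id}$ on $L'$, we have $\mu_f=\nu|_{L'}$. Restricting a measure to a set of positive measure does not decrease its packing dimension, because every set of positive $\nu|_{L'}$-measure also has positive $\nu$-measure. Hence
$$\dim_P\mu_f=\dim_P(\nu|_{L'})\geq\dim_P\nu>s.$$
Letting $s\uparrow\dim_P f(E)$ completes the proof.

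\emph{Main obstacle.} The delicate point is the lifting step, namely producing a compactly supported measure on $E$ whose push-forward is (a restriction of) $\nu$. This requires the analytic measurable-selection theorem together with Lusin's theorem, since $f$ need not be injective and $E$ need not be Borel.
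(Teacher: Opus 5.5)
The paper does not prove this lemma; it quotes it from Xiao \cite[Lemma 4.3]{xiao1997packing}, so there is no in-paper argument to compare with. Your proof is correct, and it is self-contained apart from \eqref{eq:dimP_sets}, which the paper also takes as known (for the analytic set $f(E)$ it rests on the Joyce--Preiss theorem). The inequality $\geq$ is just monotonicity of $\dim_P$ applied to the compact set $f(\supp\mu)$, which carries all of $\mu_f$. For $\leq$, the section $g$ given by the Jankov--von Neumann theorem is $\sigma(\Sigma^1_1)$-measurable, hence universally measurable. That universal measurability is what lets you apply Lusin's theorem to $g$ and $\nu$, so you should state it explicitly. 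With it, $\mu=g_*(\nu|_{L'})$ is a genuine element of $\mathcal{M}_c^+(E)$ with $\mu_f=\nu|_{L'}$.

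Your lifting step gives more than you need. It suffices to find a nonzero $\mu\in\mathcal{M}_c^+(E)$ with $\mu_f\ll\nu$: every Borel set charged by $\mu_f$ is then charged by $\nu$, so $\dim_P\mu_f\geq\dim_P\nu$. This can be done without measurable selection, in two steps.
\begin{enumerate}[(1)]
\item Pick a compact $K\subset A$ with $\nu(f(K))>0$. To do this, write $A$ as a continuous image of $\mathbb{N}^{\mathbb{N}}$ and run the usual Suslin-scheme inner approximation, which amounts to capacitability of $B\mapsto\nu^*(f(B))$.
\item Lift $\nu|_{f(K)}$ to a measure on $K$ by Hahn--Banach and the Riesz representation theorem. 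This works because $\varphi\mapsto\varphi\circ f$ is a positive isometric embedding of $C(f(K))$ into $C(K)$.
\end{enumerate}
Your approach packages both steps into a single construction. It also shows that $f|_K$ is a homeomorphism onto $L'$, which is harmless but not used.
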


In analogy with \eqref{eq:dimP_sets}, we define the packing intermediate
dimensions of a set through the measures it supports.

\begin{definition}\label{def:pid_sets}
For an analytic set $E\subset\R^n$ and $\theta\in(0,1]$, let
$$\dim_{P,\theta}E:=\sup\{\dim_{P,\theta}\mu\,:\,\mu\in\mathcal{M}_c^+(E)\}.$$
\end{definition}

By Remark~\ref{rem:basic}(b) and (c) and \eqref{eq:dimP_sets},
$\theta\mapsto\dim_{P,\theta}E$ is nondecreasing and
$\dim_{P,1}E=\dim_P E$; hence the limit
$\lim_{\theta\to0}\dim_{P,\theta}E$ exists and equals the infimum of
$\dim_{P,\theta}E$ over $\theta\in(0,1]$. This limit plays for sets the
role of the quasi-Hausdorff dimension. Since $\dim_H\mu\leq\dim_{P,\theta}\mu$
for every $\mu\in\mathcal{M}_c^+(E)$ by Lemma~\ref{lem:compare}, and
$\dim_H E=\sup\{\dim_H\mu:\mu\in\mathcal{M}_c^+(E)\}$ by Frostman's
lemma (see \cite[Chapter 8]{mattila1999geometry}), we have
$$\dim_H E\leq\lim_{\theta\to0}\dim_{P,\theta}E\leq\dim_P E .$$
We do not use the notation $\dim_{qH}E$ for this limit, in order to avoid
confusion with the quasi-Hausdorff dimensions of
\cite{angelini2025critical}, which are different quantities. Note also
that the limit is taken after the supremum over measures; in particular,
it is at least $\sup\{\dim_{qH}\mu:\mu\in\mathcal{M}_c^+(E)\}$.

\begin{theorem}\label{thm:sets_profile}
Let $E\subset\R^n$ be an analytic set and let $0<s\leq n$. Then
$$\dim_P^s E=s\quad\Longleftrightarrow\quad s\leq\lim_{\theta\to0}\dim_{P,\theta}E .$$
\end{theorem}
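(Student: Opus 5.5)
We transfer the two-sided estimate \eqref{eq:two_sided} of Theorem~\ref{thm:threshold} from measures to sets by taking suprema over $\mathcal{M}_c^+(E)$. Write $D_\theta:=\dim_{P,\theta}E$ and $D:=\lim_{\theta\to0}D_\theta=\inf_\theta D_\theta$. Recall that $\dim_P^s E=\sup\{\dim_P^s\mu:\mu\in\mathcal{M}_c^+(E)\}$ and that $\dim_P^s\mu\leq s$ for every $\mu$ (Remark~\ref{rem:profiles}(c)).

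\emph{($\Leftarrow$)} Assume $s\leq D$ and fix $\theta\in(0,1]$. Since $D_\theta\geq D\geq s$, for every $\varepsilon>0$ there is $\mu\in\mathcal{M}_c^+(E)$ with $\dim_{P,\theta}\mu>s-\varepsilon$. If $D_\theta=s$ exactly, the supremum may not be attained, which is why we only ask for $s-\varepsilon$. The lower bound in \eqref{eq:two_sided} then gives
$$\dim_P^sE\geq\dim_P^s\mu\geq(1-\theta)\min\{s,\dim_{P,\theta}\mu\}\geq(1-\theta)(s-\varepsilon).$$
Letting $\varepsilon\to0$ and then $\theta\to0$ yields $\dim_P^sE\geq s$. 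Together with $\dim_P^s E\leq s$ this gives equality.

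\emph{($\Rightarrow$)} Assume $s>D$ and choose $\theta\in(0,1]$ with $D_\theta<s$. For every $\mu\in\mathcal{M}_c^+(E)$ we have $\dim_{P,\theta}\mu\leq D_\theta$. The upper bound in \eqref{eq:two_sided} then gives
$$\dim_P^s\mu\leq(1-\theta)s+\theta\min\{s,\dim_{P,\theta}\mu\}\leq(1-\theta)s+\theta D_\theta.$$
This bound is uniform in $\mu$. Taking the supremum over $\mu$ gives $\dim_P^sE\leq(1-\theta)s+\theta D_\theta<s$, since $\theta>0$.

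The one step that needs care is this uniformity. A naive argument using only the threshold \eqref{eq:threshold} for each measure would show $\dim_P^s\mu<s$ for every $\mu$, but a supremum of such values could still equal $s$. The explicit, $\mu$-independent gap $\theta(s-D_\theta)$ supplied by \eqref{eq:two_sided} rules this out, and it is why the proof goes through the quantitative estimate rather than the threshold alone. Throughout, the normalization of measures to probability measures is harmless, because every dimension involved is invariant under scaling the measure by a positive constant.
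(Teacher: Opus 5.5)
Your proposal is correct and follows essentially the same route as the paper. Both directions take suprema over $\mathcal{M}_c^+(E)$ in the two-sided estimate \eqref{eq:two_sided}. The only difference is that for the forward implication the paper argues directly, choosing $\mu$ with $\dim_P^s\mu>s-\varepsilon$ and deducing $\dim_{P,\theta}E>s-\varepsilon/\theta$, whereas you argue by contrapositive through the uniform bound $\dim_P^sE\leq(1-\theta)s+\theta D_\theta$; this is the same inequality read in the opposite direction.
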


\begin{proof}
Write $L:=\lim_{\theta\to0}\dim_{P,\theta}E$. Suppose first that $s\leq L$,
and fix $\theta\in(0,1)$ and $\delta>0$. Since $\dim_{P,\theta}E\geq L\geq s$,
there exists $\mu\in\mathcal{M}_c^+(E)$, which we may normalize to be a
probability measure, with $\dim_{P,\theta}\mu\geq s-\delta$. By
Theorem~\ref{thm:threshold} applied to $\mu$,
$$\dim_P^{s}\mu\geq(1-\theta)\min\{s,\dim_{P,\theta}\mu\}\geq(1-\theta)(s-\delta),$$
and therefore $\dim_P^{s}E\geq(1-\theta)(s-\delta)$. Since
$\theta\in(0,1)$ and $\delta>0$ were arbitrary, $\dim_P^{s}E\geq s$, and
the reverse inequality follows from Remark~\ref{rem:profiles}(c).

Conversely, suppose that $\dim_P^s E=s$, and fix $\theta\in(0,1]$ and
$\varepsilon>0$. There exists $\mu\in\mathcal{M}_c^+(E)$, normalized to be
a probability measure, with $\dim_P^s\mu>s-\varepsilon$, and by the upper
bound in \eqref{eq:two_sided},
$$s-\varepsilon<\dim_P^s\mu\leq(1-\theta)\,s+\theta\min\{s,\dim_{P,\theta}\mu\},$$
that is, $\dim_{P,\theta}E\geq\dim_{P,\theta}\mu>s-\varepsilon/\theta$.
Letting $\varepsilon\to0$ gives $\dim_{P,\theta}E\geq s$ for every
$\theta\in(0,1]$, and therefore $L\geq s$.
\end{proof}

\begin{theorem}\label{thm:sets_images}
Let $E\subset\R^n$ be an analytic set, $m\in\mathbb{N}$ and
$0<\alpha\leq1$, and let $\{f_\omega:\R^n\to\R^m\}$ be a random
$\alpha$-H\"{o}lder map from $\R^n$ to $\R^m$. Then
\begin{equation}\label{eq:sets_xiao}
\dim_P f_\omega(E)=\frac{1}{\alpha}\,\dim_P^{\alpha m}E\qquad\text{almost surely}.
\end{equation}
Consequently,
$$\dim_P f_\omega(E)=m\ \text{ almost surely}\quad\Longleftrightarrow\quad\alpha m\leq\lim_{\theta\to0}\dim_{P,\theta}E,$$
and otherwise $\dim_P f_\omega(E)<m$ almost surely.
\end{theorem}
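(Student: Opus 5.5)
The plan is to reduce \eqref{eq:sets_xiao} to the measure statement, Theorem~\ref{thm:images}, by means of Xiao's Lemma~\ref{lem:xiao_sets}. The consequence will then follow from Theorem~\ref{thm:sets_profile}. The difficulty is that Theorem~\ref{thm:images} gives an almost sure statement separately for each measure, whereas $\mathcal{M}_c^+(E)$ is uncountable. So the two inequalities have to be handled differently.

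\emph{Upper bound.} By (H1) there is a single full-probability event on which $f_\omega$ is $\beta$-H\"older on every compact set, for every $\beta<\alpha$. For such $\omega$ and any $\mu\in\mathcal{M}_c^+(E)$, Proposition~\ref{prop:holder_upper} applies to $f_\omega$ restricted to the compact set $\supp\mu$. It gives
\[
\dim_P\mu_{f_\omega}\leq\alpha^{-1}\dim_P^{\alpha m}\mu\leq\alpha^{-1}\dim_P^{\alpha m}E .
\]
This bound is deterministic once $\omega$ is fixed, so we may take the supremum over $\mu$. Lemma~\ref{lem:xiao_sets}, applied to the continuous map $f_\omega$, then yields $\dim_P f_\omega(E)\leq\alpha^{-1}\dim_P^{\alpha m}E$ on that event.

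\emph{Lower bound.} Choose a sequence $\mu_k\in\mathcal{M}_c^+(E)$ with $\dim_P^{\alpha m}\mu_k\to\dim_P^{\alpha m}E$; this is possible by the definition of the profile of a set as a supremum. For each $k$, Proposition~\ref{prop:random_lower} (after normalizing $\mu_k$) gives $\dim_P(\mu_k)_{f_\omega}\geq\alpha^{-1}\dim_P^{\alpha m}\mu_k$ almost surely. Condition (H2) restricts to $K=\supp\mu_k$, and measurability holds there as well. Intersecting these countably many events, Lemma~\ref{lem:xiao_sets} gives $\dim_P f_\omega(E)\geq\sup_k\alpha^{-1}\dim_P^{\alpha m}\mu_k=\alpha^{-1}\dim_P^{\alpha m}E$ almost surely. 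This countable reduction is the step I expect to be the main subtlety, and it only works because the lower bound requires just a supremum.

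\emph{Consequence.} Suppose first $\alpha m\leq n$. Then $\dim_P f_\omega(E)=m$ almost surely if and only if $\dim_P^{\alpha m}E=\alpha m$, which by Theorem~\ref{thm:sets_profile} with $s=\alpha m$ holds if and only if $\alpha m\leq\lim_{\theta\to0}\dim_{P,\theta}E$. Otherwise $\dim_P^{\alpha m}E<\alpha m$, since $\dim_P^{\alpha m}\mu\leq\alpha m$ for each $\mu$ by Remark~\ref{rem:profiles}(c), so $\dim_P f_\omega(E)<m$ almost surely. Now suppose $\alpha m>n$. Remark~\ref{rem:profiles}(c) gives $\dim_P^{\alpha m}E\leq\dim_P E\leq n<\alpha m$, and the limit is at most $n<\alpha m$, so both sides of the equivalence fail and $\dim_P f_\omega(E)<m$ almost surely.
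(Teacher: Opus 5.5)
Your proposal is correct and follows the paper's proof essentially step by step. The upper bound is obtained on the single (H1) event via Proposition~\ref{prop:holder_upper} and Lemma~\ref{lem:xiao_sets}, and the lower bound via a countable maximizing sequence $\mu_k$ and Proposition~\ref{prop:random_lower}. The consequence follows from Theorem~\ref{thm:sets_profile}, with the case $\alpha m>n$ handled separately.

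The only cosmetic difference is in deriving $\dim_P f_\omega(E)\geq\dim_P(\mu_k)_{f_\omega}$. You invoke Lemma~\ref{lem:xiao_sets}, while the paper argues directly that $(\mu_k)_{f_\omega}$ is carried by the compact set $f_\omega(\supp\mu_k)\subset f_\omega(E)$. That direct argument is exactly the trivial direction of the lemma you use.
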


\begin{proof}
For every $\mu\in\mathcal{M}_c^+(E)$, normalized to be a probability
measure, the restrictions of the maps $f_\omega$ to the compact set
$\supp\mu$ form a random $\alpha$-H\"{o}lder map from $\supp\mu$ to $\R^m$.

Fix $\omega$ in the almost sure event of (H1). For every
$\mu\in\mathcal{M}_c^+(E)$, the map $f_\omega$ is $\beta$-H\"{o}lder
continuous on $\supp\mu$ for every $\beta\in(0,\alpha)$, so that
Proposition~\ref{prop:holder_upper} gives
$\dim_P\mu_{f_\omega}\leq\dim_P^{\alpha m}\mu/\alpha\leq\dim_P^{\alpha m}E/\alpha$,
and Lemma~\ref{lem:xiao_sets} yields
$\dim_P f_\omega(E)\leq\dim_P^{\alpha m}E/\alpha$.

For the reverse inequality, choose $\mu_j\in\mathcal{M}_c^+(E)$ with
$\dim_P^{\alpha m}\mu_j\to\dim_P^{\alpha m}E$. By
Proposition~\ref{prop:random_lower}, for each $j$, almost surely
$\dim_P(\mu_j)_{f_\omega}\geq\dim_P^{\alpha m}\mu_j/\alpha$. Since
$(\mu_j)_{f_\omega}$ is supported on the compact set
$f_\omega(\supp\mu_j)\subset f_\omega(E)$, we have
$\dim_P f_\omega(E)\geq\dim_P(\mu_j)_{f_\omega}$, and intersecting the
countably many almost sure events yields
$\dim_P f_\omega(E)\geq\dim_P^{\alpha m}E/\alpha$ almost surely. This
proves \eqref{eq:sets_xiao}.

Finally, $\dim_P f_\omega(E)=m$ almost surely if and only if
$\dim_P^{\alpha m}E=\alpha m$. If $\alpha m\leq n$, by
Theorem~\ref{thm:sets_profile} this holds if and only if
$\alpha m\leq\lim_{\theta\to0}\dim_{P,\theta}E$; if $\alpha m>n$, neither
condition holds, since $\dim_P^{\alpha m}E\leq\dim_P E\leq n$ by
Remark~\ref{rem:profiles}(c) and \eqref{eq:dimP_sets}, and
$\lim_{\theta\to0}\dim_{P,\theta}E\leq\dim_P E\leq n$. In the remaining
case, $\dim_P^{\alpha m}E<\alpha m$ and $\dim_P f_\omega(E)<m$ almost
surely.
\end{proof}

For fractional Brownian motion, \eqref{eq:sets_xiao} is
\cite[Theorem 4.1]{xiao1997packing}, and for orthogonal projections it
gives $\dim_P\pi_V(E)=\dim_P^m E$ for $\gamma_{n,m}$-almost every
$V\in G(n,m)$. Combined with
Remark~\ref{rem:FFS}, the latter gives
$\dim_P\pi_V(E)\geq\dim_P E-\max\{0,\dim_A E-m\}$ for
$\gamma_{n,m}$-almost every $V$, which is the projection theorem of
Falconer, Fraser and Shmerkin \cite{falconer2021assouad} in its Assouad
dimension form, for analytic sets.\\

\addtocontents{toc}{\protect\setcounter{tocdepth}{0}}
\section*{funding}
\addtocontents{toc}{\protect\setcounter{tocdepth}{2}}

This work was supported by FIS-2023-02725, singular structures in the
geometry of measures: decompositions, rigidity and rectifiability, CUP
E53C25001800001. The author was additionally partially supported by
grants PICT 2022-4875 (ANPCyT), PIP 202287/22 (CONICET), and PROICO 3-0720
``An\'alisis Real y Funcional. Ec. Diferenciales''.

\addtocontents{toc}{\protect\setcounter{tocdepth}{0}}
\section*{Declaration of generative AI and AI-assisted technologies in the writing process}
\addtocontents{toc}{\protect\setcounter{tocdepth}{2}}

During the preparation of this work the author used Claude (Anthropic) in
order to improve the clarity, grammar, and English style of the
manuscript, and to help with LaTeX formatting. After using this tool, the
author reviewed and edited the content as needed.


\begin{thebibliography}{99}

\bibitem{angelini2025critical}
N.~Angelini and U.~Molter.
\newblock Critical values for intermediate and box dimensions of projections and other images of compact sets.
\newblock {\em J. Fractal Geom.}, 12(3):369--394, 2025.

\bibitem{ANGELINI2026130039}
N.~Angelini, U.~Molter, and J.~Tejada.
\newblock Intermediate dimensions of measures: interpolating between {H}ausdorff and {M}inkowski dimensions.
\newblock {\em J. Math. Anal. Appl.}, 555(1):130039, 2026.

\bibitem{burrell2021projection}
S. A. Burrell, K. J. Falconer and J. M. Fraser,
\emph{Projection theorems for intermediate dimensions},
J. Fractal Geom. \textbf{8} (2021), no. 2, 95--116.

\bibitem{Burrell}
S.~A. Burrell.
\newblock Dimensions of fractional {B}rownian images.
\newblock {\em J. Theoret. Probab.}, 35(4):2217--2238, 2022.

\bibitem{falconer2013fractal}
K.~Falconer.
\newblock {\em Fractal geometry: mathematical foundations and applications}.
\newblock 3rd ed., John Wiley \& Sons, 2013.

\bibitem{falconer2020intermediate}
K.~J. Falconer, J.~M. Fraser, and T.~Kempton.
\newblock Intermediate dimensions.
\newblock {\em Math. Z.}, 296(1-2):813--830, 2020.

\bibitem{falconer2021assouad}
K.~J. Falconer, J.~M. Fraser, and P.~Shmerkin.
\newblock Assouad dimension influences the box and packing dimensions of orthogonal projections.
\newblock {\em J. Fractal Geom.}, 8(3):247--259, 2021.

\bibitem{falconer1997packing}
K.~J. Falconer and J.~D. Howroyd.
\newblock Packing dimensions of projections and dimension profiles.
\newblock {\em Math. Proc. Cambridge Philos. Soc.}, 121:269--286, 1997.

\bibitem{falconer1996packing}
K.~J. Falconer and P.~Mattila.
\newblock The packing dimension of projections and sections of measures.
\newblock {\em Math. Proc. Cambridge Philos. Soc.}, 119:695--713, 1996.

\bibitem{fraser2018new}
J. M. Fraser and H. Yu,
\emph{New dimension spectra: finer information on scaling and homogeneity},
Adv. Math. \textbf{329} (2018), 273--328.

\bibitem{fraser2020assouad}
J.~M. Fraser.
\newblock {\em Assouad dimension and fractal geometry}, volume 222 of {\em Cambridge Tracts in Mathematics}.
\newblock Cambridge University Press, 2020.

\bibitem{hu1994fractal}
X.~Hu and S.~J. Taylor.
\newblock Fractal properties of products and projections of measures in {$\mathbb{R}^d$}.
\newblock {\em Math. Proc. Cambridge Philos. Soc.}, 115:527--544, 1994.

\bibitem{kahane1985some}
J.-P. Kahane.
\newblock {\em Some random series of functions}, 2nd ed.
\newblock Cambridge University Press, 1985.

\bibitem{mattila1999geometry}
P.~Mattila.
\newblock {\em Geometry of sets and measures in {E}uclidean spaces: fractals and rectifiability}, volume~44 of {\em Cambridge Studies in Advanced Mathematics}.
\newblock Cambridge University Press, 1999.

\bibitem{xiao1997packing}
Y.~Xiao.
\newblock Packing dimension of the image of fractional {B}rownian motion.
\newblock {\em Statist. Probab. Lett.}, 33(4):379--387, 1997.

\end{thebibliography}
\end{document}